\newtheorem{theorem}{Theorem}
\newtheorem{lemma}{Lemma}
\newtheorem{corollary}{Corollary}
\newtheorem{conjecture}{Conjecture}
\newtheorem{problem}{Problem}
\def\figscale{0.9}
\newcommand{\set}[1]{\ensuremath{\left\{#1 \right\}}}
\newcommand{\chife}[0]{\chi_{\textrm{$\ell$-f}}^\prime}
\renewcommand{\int}{{\rm int}}
\newcommand{\ext}{{\rm ext}}
\newcommand{\fec}{$3$-FEC\xspace}
\newcommand{\chf}{\ensuremath{\mathrm{ch}_\mathrm{f}}}
\begin{document}

\begin{frontmatter}

\title{$3$-facial edge-coloring of plane graphs}

\author[im]{Mirko Hor\v{n}\'{a}k\fnref{ap}}
\ead{mirko.hornak@upjs.sk}

\author[ib,if]{Borut Lu\v{z}ar\fnref{arrs}}
\ead{borut.luzar@gmail.com}

\author[ib]{Kenny \v{S}torgel\fnref{mr,arrs}}
\ead{kennystorgel.research@gmail.com}

\fntext[ap]{Supported by the Slovak Research and Development Agency under the contract APVV--19--0153 and by the grant VEGA 1/0574/21.}
\fntext[mr]{Supported by Young Researchers Grant of Slovenian Research Agency.}
\fntext[arrs]{Supported by Slovenian Research Agency (Program P1--0383 and Projects J1--1692 and J1--3002).}

\address[im]{P. J. Šaf\'{a}rik University, Faculty of Science, Košice, Slovakia}
\address[ib]{Faculty of Information Studies, Novo mesto, Slovenia}
\address[if]{University of Ljubljana, Faculty of Mathematics and Physics, Ljubljana, Slovenia}

\begin{abstract}
	An $\ell$-facial edge-coloring of a plane graph is a coloring of its edges such that 
	any two edges at distance at most $\ell$ on a boundary walk of any face receive distinct colors.
	It is the edge-coloring variant of the $\ell$-facial vertex coloring, 
	which arose as a generalization of the well-known cyclic coloring.	
	It is conjectured that at most $3\ell + 1$ colors suffice for an $\ell$-facial edge-coloring of any plane graph.
	The conjecture has only been confirmed for $\ell \le 2$, and in this paper, 
	we prove its validity for $\ell = 3$.
\end{abstract}

\begin{keyword}
	$\ell$-facial edge-coloring \sep facial coloring \sep cyclic coloring \sep Facial Coloring Conjecture
\end{keyword}

\end{frontmatter}

\section{Introduction}

An \textit{$\ell$-facial edge-coloring} ($\ell$-FEC) of a plane graph $G$ (with loops and parallel edges allowed) 
is a not necessarily proper edge-coloring of its edges 
such that all the edges on a facial trail of length at most $\ell + 1$ receive distinct colors.
The minimum number of colors for which $G$ admits an $\ell$-facial edge-coloring is called the \textit{$\ell$-facial chromatic index of $G$}, 
denoted by $\chife(G)$.

This type of coloring was introduced in~\cite{LuzMocSotSkrSug15}, as the edge-coloring variant 
of the $\ell$-facial vertex coloring~\cite{KraMadSkr05}, 
which is a generalization of the cyclic coloring~\cite{OrePlu69}; 
the latter being a vertex coloring of a plane graph in which all the vertices incident with the same face receive distinct colors.
The cyclic coloring and the $\ell$-facial vertex coloring received a lot of attention,
but there are still many open problems regarding these two topics (see~\cite{CzaHorJen21} for a comprehensive survey 
and~\cite{DvoHebHlaKraNoe21} for the most recent results).
Particularly, Kr\'{a}l' et al.~\cite{KraMadSkr05} conjectured that at most $3\ell+1$ colors are required 
for an $\ell$-facial vertex coloring of any plane graph with the bound being tight by the
plane embeddings of $K_4$, in which three edges adjacent to the same vertex are subdivided $\ell-1$ times.
All the cases for $\ell \ge 2$ are still open, whereas the case $\ell = 1$ is implied by the Four Color Theorem.

Note that an $\ell$-facial edge-coloring of a plane graph $G$ corresponds to an $\ell$-facial vertex coloring 
of the medial graph $M(G)$ of $G$; i.e., the graph with the vertex set $V(M(G)) = E(G)$ 
and two vertices $u$ and $v$ of $M(G)$ being connected with $k$ edges if $u$ and $v$ 
correspond to two adjacent edges of $G$ incident with $k$ common faces in $G$.
Yet, the upper bound for $\ell$-facial chromatic index seems to be the same as for the vertex variant.
Namely, in~\cite{LuzMocSotSkrSug15}, the authors proposed the following conjecture.
\begin{conjecture}[Facial Edge-Coloring Conjecture]
	\label{con:facial edge-coloring}
	Every plane graph admits an $\ell$-facial edge-coloring with at most $3\ell + 1$ colors for every $\ell\ge 1$.
\end{conjecture}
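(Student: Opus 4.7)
The plan is to attack the conjecture through the discharging method applied to a minimum counterexample, exploiting the medial graph reformulation noted in the excerpt. Fix $\ell \geq 1$ and suppose, for contradiction, that the class of plane graphs admitting no $\ell$-FEC with $3\ell+1$ colors is nonempty; let $G$ be a member minimizing $|V(G)| + |E(G)|$, with ties broken by maximizing the number of edges on short faces. The first steps are standard minimality arguments: $G$ is $2$-edge-connected, has no face of length $1$ (loops can be handled trivially), and cannot contain certain small separating structures. From here the task splits into two parallel efforts, structural (identifying reducible configurations) and discharging (forcing their appearance via Euler's formula).

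For the reducible configurations, I would work primarily on the medial graph $M(G)$, which is $4$-regular and plane and where an $\ell$-FEC of $G$ corresponds to an $\ell$-facial vertex coloring of $M(G)$; this lets one port machinery developed for facial vertex coloring (short separating cycles, low-degree faces sharing edges, bounded-degree paths along a face, etc.). Each candidate configuration $C$ is to be analyzed by a color-counting argument of the following shape: removing or contracting $C$ yields a smaller plane graph $G'$ to which the minimality hypothesis supplies an $\ell$-FEC with $3\ell+1$ colors; one then shows that, for each edge $e$ of $C$ that must be colored, the number of forbidden colors on $e$ coming from the at most $2\ell$ other edges within facial distance $\ell$ of $e$ on both incident faces is strictly less than $3\ell+1$, so a free color remains. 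The discharging phase assigns an initial charge based on Euler's formula, giving vertices of degree $d(v)$ charge $d(v)-4$ and faces of length $d(f)$ charge $d(f)-4$ (total $-8$), and prescribes rules that move charge from long faces and high-degree vertices to short faces and low-degree vertices so that every element finishes with nonnegative charge, contradicting the fixed negative total.

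The main obstacle is uniformity in $\ell$. For any single $\ell$ the above strategy can be executed by hand, as done in the excerpt for $\ell=3$ and previously for $\ell\le 2$; but an argument valid for all $\ell$ simultaneously must describe reducible configurations parametrically, and the discharging rules must scale correctly. The slack between $3\ell+1$ and the facial constraint grows only linearly in $\ell$ while the number of edges within facial distance $\ell$ grows like $2\ell$ per face, so configurations that are clearly reducible for small $\ell$ become borderline for large $\ell$, and many case analyses must be replaced by uniform combinatorial lemmas. A natural route around the explosion is induction on $\ell$: start from an $(\ell-1)$-FEC with $3(\ell-1)+1 = 3\ell-2$ colors guaranteed by the inductive hypothesis and try to upgrade it to an $\ell$-FEC by using three fresh colors to repair the newly introduced conflicts. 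The difficulty there is that a single recoloring can propagate around long faces, so one needs a global procedure, perhaps a Vizing-style fan or a Kempe-chain argument along each face, to ensure termination.

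Accordingly I expect the hard part to be either (i) constructing a well-founded global recoloring scheme for the induction-on-$\ell$ route, or (ii) isolating a finite family of reducible configurations whose reducibility proofs are uniform in $\ell$ and whose unavoidability is established by a single discharging scheme with parametric rules. Failing a uniform proof, the realistic outcome is what the present paper achieves: settling the next open case $\ell=3$ via a tailored discharging argument, and using the structural insights gained to narrow down what a uniform proof should look like.
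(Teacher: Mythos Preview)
The statement you were asked to prove is a \emph{conjecture}, and the paper does not prove it; the paper's contribution is Theorem~\ref{thm:main}, which settles only the single case $\ell=3$. So there is no ``paper's own proof'' of Conjecture~\ref{con:facial edge-coloring} to compare your attempt against.

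Your submission is not a proof either, and you say as much in your final paragraph. What you have written is a research outline: reduce to a minimal counterexample, pass to the medial graph, look for reducible configurations, discharge, or alternatively try induction on $\ell$ with a Kempe-chain repair. You correctly identify the central obstruction --- uniformity in $\ell$ --- and then explicitly concede that you do not know how to overcome it (``the hard part'' is either route (i) or route (ii), neither of which you carry out). That is an honest assessment of the state of the problem, but it is not a proof attempt in any meaningful sense: no configuration is actually shown reducible, no discharging rule is written down and verified, and the inductive recoloring scheme is left entirely unspecified.

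A couple of technical remarks on the sketch itself. First, the forbidden-color count you give (``at most $2\ell$ other edges within facial distance $\ell$ of $e$ on both incident faces'') is too optimistic: an edge has up to $2\ell$ such neighbors on \emph{each} of its two incident faces, so up to $4\ell$ forbidden colors, which already exceeds $3\ell+1$ and is exactly why the problem is hard. Second, the discharging weights $d(v)-4$ and $d(f)-4$ are tuned to $4$-regular graphs (i.e., to the medial graph $M(G)$), not to $G$ itself; the paper, working directly in $G$, uses $2d(v)-6$ and $\ell(\alpha)-6$. Neither choice is wrong, but you should be clear about which graph you are discharging on.
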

\noindent If true, the conjectured bound is tight and achieved by the graphs depicted in Figure~\ref{fig:3l+1graph}.
\begin{figure}[htpb!]
	$$
		\includegraphics{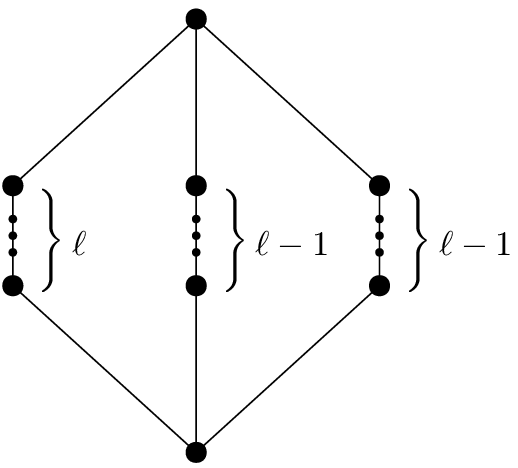}
	$$
	\caption{A graph $G$ with $\chife(G) = 3\ell + 1$.}
	\label{fig:3l+1graph}
\end{figure}

As mentioned above, for the case $\ell = 1$, Conjecture~\ref{con:facial edge-coloring} is implied by the Four Color Theorem,
and for the case $\ell=2$, it has been confirmed in~\cite{LuzMocSotSkrSug15}.
In this paper, we prove that Conjecture~\ref{con:facial edge-coloring} holds also for the case $\ell = 3$.
\begin{theorem}
	\label{thm:main}
	Every plane graph admits a $3$-facial edge-coloring with at most $10$ colors.
\end{theorem}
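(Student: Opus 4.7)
The natural plan is to argue by contradiction via a minimum counterexample together with a discharging argument, mirroring the strategy used in \cite{LuzMocSotSkrSug15} for the case $\ell=2$. Let $G$ be a plane graph minimizing $|V(G)|+|E(G)|$ subject to $\chif(G) > 10$. One first performs standard simplifying reductions: $G$ is connected; $G$ has no pendant edge (such an edge forbids at most a few colors upon re-insertion into a $10$-coloring of $G-e$, well below $10$); $G$ has no vertex of degree $2$ whose suppression preserves the plane structure; and $G$ admits no small edge-cut through which independent colorings could be stitched back together. These reductions produce a plane graph with prescribed lower bounds on vertex degree and face length.

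The central technical task is cataloguing reducible configurations. For each edge $e$ on faces $f_1,f_2$, the set of edges at facial distance at most $3$ from $e$ consists of up to $6$ edges per face when both $f_1$ and $f_2$ are long, giving a priori up to $12$ forbidden colors -- already $2$ more than the palette. The gain comes from short faces: in a face of length at most $6$, the facial trail of length $4$ through $e$ wraps around and causes repetitions, so the bound on forbidden colors drops sharply. A reducible configuration is a small neighborhood, typically a short face incident to vertices of prescribed degrees, such that deleting or re-routing one of its edges yields a smaller plane graph $G'$; by minimality $G'$ has a valid $10$-coloring, and a careful case analysis on the available colors at the deleted edge extends the coloring to $G$. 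The catalogue will include various short faces with light boundary (small-degree vertices nearby), pairs of short faces sharing an edge, and configurations where several short faces cluster.

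With these reducibility lemmas in hand, one applies discharging. Assign initial charges $\mathrm{ch}(v) = d(v)-4$ to vertices and $\mathrm{ch}(f) = |f|-4$ to faces, so that $\sum \mathrm{ch} = -8$ by Euler's formula. Design rules that transfer charge from long faces and high-degree vertices to triangles, $4$-faces, and low-degree vertices. The reducibility lemmas must ensure that every deficient face/vertex has enough long neighbors to absorb sufficient charge, so that the total final charge is non-negative, contradicting the negative sum.

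The main obstacle will be the interaction of several short faces --- in particular triangles and $4$-faces sharing edges --- together with vertices of minimum degree. Because a facial trail of length $4$ easily hops from one short face into another, many apparently harmless colorings of $G-e$ fail to extend, forcing one to establish a substantial list of reducible configurations and exclude them simultaneously. The hard part is not any single reduction but calibrating the discharging rules so that every configuration not excluded by the reducibility catalogue receives non-negative final charge; this requires tight control over the extremal cases, which for $\ell=3$ are more delicate than for $\ell=2$ because of the wider facial reach.
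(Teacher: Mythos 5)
Your proposal reproduces the paper's overall strategy (minimal counterexample, a catalogue of reducible configurations, then discharging), but as it stands it is only a plan: neither the reducible configurations nor the discharging rules are actually specified or verified, and these constitute essentially the whole content of the proof. Moreover, where you do commit to specifics, the guesses point in the wrong direction. Short faces are not the residual difficulty: in the minimal counterexample every face of length at most $4$ is reducible by an easy argument (contract the face; each of its edges then has at least $4$ available colors, and \cref{thm:list-assignment} applies), and even $8$-faces can be excluded (\cref{lem:face-length}, \cref{lem:8face}), so ``triangles and $4$-faces sharing edges'' never appear and cannot be the objects your discharging must feed. The genuine difficulty is the $2$-vertices: your reduction ``$G$ has no vertex of degree $2$ whose suppression preserves the plane structure'' does not work, because after suppressing a $2$-vertex the two incident edges of $G$ collapse to one edge, and re-expanding forces you to recolor an edge whose $3$-facial neighborhood can contain up to eleven colored edges, more than the palette allows. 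Consequently $2$-vertices (and $2$-threads) survive in the minimal counterexample, and the heart of the paper is a long list of lemmas restricting how they sit on faces of length $5$--$11$ (e.g.\ \cref{lem:5face4vertex}, \cref{lem:6face2thread}, \cref{lem:7face2th3nei}--\cref{lem:7faceThree2verts}, \cref{lem:9face2vertex}, \cref{lem:10face2vertex}), proved not by greedy extension but by edge identifications inside a face combined with Hall's theorem, the list version of Brooks' theorem, the Combinatorial Nullstellensatz, and delicate recoloring of already-colored edges at $3$-vertices.

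Your charge assignment also creates a problem your catalogue could not repair: with $\mathrm{ch}(v)=d(v)-4$ every $3$-vertex is deficient, yet nothing in a realistic reducibility catalogue for this problem bounds the number or placement of $3$-vertices globally (the paper only controls them near $5$-faces). The paper instead uses $\mathrm{ch}_0(v)=2d(v)-6$ and $\mathrm{ch}_0(\alpha)=\ell(\alpha)-6$, which makes $3$-vertices neutral and concentrates the deficiency exactly on $2$-vertices and $5$-faces, matching what the structural lemmas can deliver; the rules then route charge from $4^+$-vertices and long faces to $2$-vertices and $5$-faces, with the verification hinging on bounds such as \cref{cor:num2verts} for the number of $2$-vertices on long faces. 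So while your high-level route is the same as the paper's, the proposal has a genuine gap: the reducibility and discharging content is missing, and the specific structural and charging choices you sketch would not assemble into a working proof without being replaced by the kind of analysis the paper carries out.
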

Note that the theorem holds for graphs with loops and parallel edges (the so-called \textit{pseudographs}).

The paper is structured as follows. 
In Section~\ref{sec:prel}, we introduce notation and present auxiliary results used for proving Theorem~\ref{thm:main}.
The proof of the theorem is given in Section~\ref{sec:proof}, and
we conclude the paper with a discussion on limitations of our approach and directions for further work.

\section{Preliminaries}
\label{sec:prel}

In this section, we define notions and present auxiliary results
that we are using in our proof.
In figures, by full circles we depict the vertices with a given degree, 
while empty circles denote vertices with arbitrary degrees.
We denote the set of consecutive integers from $p$ to $q$ by $[p,q]$, 
i.e., $[p,q] = \set{z \in \mathbb{Z} \ : \ p \le z \le q}$.

We denote the degree of a vertex $v$ by $d(v)$ and the length of a face $\alpha$ by $\ell(\alpha)$.
A vertex of degree $k$ (at least $k$, at most $k$) is called a {\em $k$-vertex} (a {\em $k^+$-vertex}, a {\em $k^-$-vertex}, respectively).
Similarly, a face of length $k$ (at least $k$, at most $k$) is called a {\em $k$-face} (a {\em $k^+$-face}, a {\em $k^-$-face}, respectively).
Note that the length of a face $\alpha$ in a $2$-connected plane graph is the number of edges 
(as well as the number of vertices) incident with $\alpha$.

Two edges are \textit{at facial-distance $k$} if they are at distance $k$ on some facial trail and $k$ is minimum satisfying the property;
two edges are \textit{$k$-facially adjacent} or \textit{within facial-distance $k$} if they are at facial-distance at most $k$. 
Note that we define the distance between two edges in $G$ as the distance between the corresponding vertices in the line graph of $G$;
in particular, the distance between adjacent edges equals $1$.
The \textit{$k$-facial neighborhood} of an edge $e$ is the set of all edges that are $k$-facially adjacent to $e$.

We define the \textit{distance between a vertex $v$ and an edge $e$} as the minimum distance from $v$ to any endvertex of $e$.

A $k$-vertex (a $k^+$-vertex) adjacent to a vertex $v$ is a {\em $k$-neighbor} (a {\em $k^+$-neighbor}) of $v$.
A {\em $k$-thread} is a subgraph in $G$, isomorphic to the path $P_k$, in which all vertices have degree $2$ in $G$.
When considering a $2$-thread composed of vertices $u$ and $v$, we denote it the \textit{$2$-thread $(u,v)$}.
A $k$-thread is {\em incident with a face $\alpha$} if all its vertices are incident with $\alpha$.
%$(u,v)$ is adjacent to a vertex $w$ if
A $2$-thread $(u,v)$ is $\ell$-facially adjacent to a vertex $w$ if there is a facial trail of length at most $\ell$ 
between $u$ and $w$ or between $v$ and $w$.

The number of $2$-vertices adjacent to a vertex $v$ (incident with a face $\alpha$) is denoted $n_2(v)$ ($n_2(\alpha)$, respectively).

By \textit{contracting a face $\alpha$} of a graph $G$ we mean contracting step by step all the edges on the boundary of $\alpha$, 
i.e., removing the edges and identifying the vertices of $\alpha$.
We denote the obtained graph by $G/\alpha$.

Let $\sigma$ be a partial 3-FEC of a graph $G$ with the color set $C$. A color $c \in C$ is {\em $\sigma$-available}
(or {\em available} if $\sigma$ is evident from the context) for a non-colored edge $e \in E(G)$ provided that the set of colors 
of the edges $3$-facially adjacent to $e$ does not contain $c$.
The set of $\sigma$-available colors for $e$ is denoted $A_\sigma(e)$ or $A(e)$ for short.
Given a set $E \subseteq E(G)$ of non-colored edges, the set
$$
	A_\sigma(E) = \bigcup_{e \in E} A_\sigma(e)
$$
is called the set of $\sigma$-available colors for $E$.
The just introduced notion is used mainly for $E = E(\alpha)$ if all edges in $E(\alpha)$
(i.e., edges incident with a face $\alpha$ of $G$) are non-colored, and the notation $A_\sigma(E(\alpha))$ is then simplified to $A(\alpha)$.

A map $L$ is a \textit{list-assignment} for a graph $G$ 
if it assigns a list $L(v)$ of colors to each vertex $v$ of $G$. 
If $G$ admits a proper vertex coloring $\sigma_L$ such that $\sigma_L(v) \in L(v)$ for all vertices in $V(G)$, 
then $G$ is \textit{$L$-colorable} and $\sigma_L$ is an \textit{$L$-coloring} of $G$. 
The graph $G$ is \textit{$k$-choosable} if it is $L$-colorable for every list-assignment $L$, 
where $|L(v)| \ge k$, for every $v \in V(G)$. 

We make use of the following generalization of Brooks' theorem to list coloring.
% Theorem 2
\begin{theorem}[Borodin~\cite{Bor77}; Erd\H{o}s, Rubin, Taylor~\cite{ErdRubTay80}]
	\label{thm:list-assignment}
	Let $G$ be a connected graph. 
	Suppose that $L$ is a list-assignment where $|L(v)|\ge d(v)$ for each $v\in V(G)$. If
	\begin{itemize}
		\item{} $|L(v)| > d(v)$ for some vertex $v$, or
		\item{} $G$ contains a block which is neither a complete graph nor an induced odd cycle (i.e.,~$G$ is not a Gallai tree),
	\end{itemize}
	then $G$ admits an $L$-coloring.
\end{theorem}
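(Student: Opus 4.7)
The plan is to prove this classical generalization of Brooks' theorem by strong induction on $|V(G)|$, with the split being dictated by whether some list is strictly larger than the corresponding degree.

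\textbf{Slack case.} Suppose first that $|L(v)|>d(v)$ for some vertex $v$. I would root a breadth-first spanning tree at $v$ and enumerate the vertices $v=v_1,v_2,\ldots,v_n$ in non-decreasing BFS-distance from $v$, then color them in the reverse order. For each $v_i$ with $i\geq 2$, the BFS-parent $v_j$ with $j<i$ is still uncolored when $v_i$ is processed, so at most $d(v_i)-1$ of its neighbors are colored and $|L(v_i)|\geq d(v_i)$ leaves a legal choice. Finally, at $v_1=v$ all $d(v)$ neighbors may be colored, but $|L(v)|>d(v)$ still leaves a legal color.

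\textbf{Tight case, reduction to $2$-connected graphs.} Now assume $|L(v)|=d(v)$ for every $v$ and that $G$ is not a Gallai tree. If $G$ has a cut vertex, pick an endblock $B$ of the block-cut tree, meeting the rest of $G$ at a single cut vertex $v$, and set $H=G-(V(B)\setminus\{v\})$. Since $v$ has neighbors both inside and outside $B$, one has $d_B(v)<d_G(v)=|L(v)|$, so $L$ yields slack at $v$ both in $B$ and in $H$. I would color $H$ via the slack case (getting some color $c$ at $v$) and extend into $B\setminus\{v\}$: for each neighbor $u\in N_B(v)$ the updated list $L(u)\setminus\{c\}$ satisfies $|L(u)\setminus\{c\}|\geq d_{B-v}(u)$, and the remaining degrees-plus-lists condition on $B\setminus\{v\}$ either has slack at some vertex (when $c$ avoids some $L(u)$, or when $B$ is itself non-Gallai and the $v$-slack in $B$ propagates) or is tight on a non-Gallai subgraph to which the inductive hypothesis applies.

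\textbf{$2$-connected case.} Assume $G$ is $2$-connected, neither complete nor an odd cycle. The key structural ingredient is the classical lemma that such a $G$ contains three vertices $x,y,z$ with $xz,yz\in E(G)$, $xy\notin E(G)$, and $G-\{x,y\}$ connected. If $L(x)\cap L(y)\neq\emptyset$, I pick $\alpha$ in the intersection, color both $x$ and $y$ with $\alpha$, and update lists $L'(u)=L(u)\setminus\{\alpha\}$ on $G'=G-\{x,y\}$. Then $|L'(z)|\geq d(z)-1>d(z)-2=d_{G'}(z)$ gives slack at $z$, and the slack case applied to the connected graph $G'$ finishes the coloring. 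When $L(x)\cap L(y)=\emptyset$, I would instead pick $\alpha\in L(x)$ and search for a neighbor $u$ of $x$ with $\alpha\notin L(u)$; such a $u$ yields slack at $u$ in $G-x$ once $x$ is colored $\alpha$, and the slack case again finishes. If no such $u$ exists for any $\alpha\in L(x)$, then $L(x)\subseteq L(u)$ for every $u\in N(x)$, and symmetrically for $y$, forcing a very rigid list structure at the common neighbour $z$ (namely $|L(z)|\geq d(x)+d(y)$) that a finer case analysis must exploit.

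The main obstacle is precisely this last rigid sub-case of the $2$-connected analysis, together with the careful bookkeeping needed to make the endblock peeling a genuine induction rather than a circular argument. Both difficulties are addressed in the original proofs of Borodin and of Erd\H{o}s, Rubin, and Taylor, whose structural case analyses my proposal would follow.
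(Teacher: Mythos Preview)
The paper does not contain a proof of this theorem. It is stated with attribution to Borodin and to Erd\H{o}s, Rubin, and Taylor and then used as a black-box tool throughout Section~\ref{sec:struct}; no argument for it appears anywhere in the text. So there is no ``paper's own proof'' to compare your proposal against.

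As for the proposal itself: the slack case is correct and standard. In the tight $2$-connected case, your treatment of the sub-case $L(x)\cap L(y)=\emptyset$ is genuinely incomplete, as you acknowledge. A cleaner route there is to abandon the specific triple $x,y,z$ and instead observe that if not all lists are identical, then some edge $uv$ has $L(u)\neq L(v)$; pick $\alpha\in L(u)\setminus L(v)$, color $u$ with $\alpha$, and note that $v$ acquires slack in the connected graph $G-u$, reducing to the slack case. If all lists are identical, then $G$ is $d$-regular with every list equal to the same $d$-set, and ordinary Brooks' theorem finishes. Your endblock-peeling reduction also needs more care: you must ensure the induction actually terminates at a $2$-connected non-Gallai block rather than just any endblock, since an endblock that is complete or an odd cycle gives you no leverage on its own.
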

In our proofs, for a configuration of edges being colored we create a conflict graph, in which every edge is represented by a vertex,
and two vertices are adjacent
if the corresponding edges are within facial-distance $3$. The map $L$ assigns to every edge its list of available colors.

A useful tool in proving coloring results is also Hall's Theorem,
which guarantees distinct colors for a set of vertices.
% Theorem 3
\begin{theorem}[Hall~\cite{Hal35}]
	\label{thm:Hall}
	A bipartite graph with partition sets $A$ and $B$ admits a matching that covers every vertex
	of $A$ if and only if for every set $S \subseteq A$ the number of vertices of $B$ with a neighbor in $S$ is at least $|S|$.
\end{theorem}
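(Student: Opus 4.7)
The plan is to argue by contradiction using the standard discharging framework adapted to $3$-facial edge-colorings. I assume a minimum counterexample $G$ with respect to $|V(G)|+|E(G)|$: a plane (pseudo)graph that has no $3$-FEC with $10$ colors but for which every plane graph strictly smaller does. First I reduce to the well-behaved case. Any bridge, pendant vertex, or cut-vertex can be handled locally: deleting or splitting off a small attached piece, $3$-FEC-coloring the remainder by minimality, and extending on the missing edges (each such edge sees only a bounded number of colored edges within facial-distance $3$, so Theorem~\ref{thm:list-assignment} on the conflict graph finishes the job). After these reductions I may assume $G$ is $2$-connected, essentially $3$-edge-connected, has minimum degree $2$, no multi-edges or loops of length $1$ playing a role, and no very short faces beyond what structural necessity allows.

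Next I would compile a catalogue of \emph{reducible configurations}, i.e., local substructures that cannot appear in $G$. Typical ones are: long threads of $2$-vertices incident with a given face; $3$- or $4$-faces with many incident $2$-vertices; pairs of small faces sharing an edge; and faces of prescribed length whose boundary carries too many $2$-threads. Each reduction follows the same template: delete (or contract, via $G/\alpha$ for a small face $\alpha$) a carefully chosen set $E$ of edges, apply minimality to obtain a $3$-FEC of the reduced graph with $10$ colors, and then extend to $E$. For the extension I build the conflict graph on $E$ with the list-assignment $L(e)=A_\sigma(e)$ of available colors, count that $|L(e)|\ge d_{\mathrm{conf}}(e)$ in the conflict graph, and apply Theorem~\ref{thm:list-assignment} (after ruling out complete graphs and odd cycles by slack in at least one list, which the reduction is designed to supply). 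When the list-counting is tight but a perfect system of distinct representatives is needed, Hall's Theorem~\ref{thm:Hall} assigns distinct colors to boundary edges of a small face; this is the typical way a non-colorable configuration is excluded.

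The last step is the discharging argument. Starting from the Euler formula $\sum_v(d(v)-4)+\sum_\alpha(\ell(\alpha)-4)=-8$, I assign initial charge $\mu(v)=d(v)-4$ to every vertex and $\mu(\alpha)=\ell(\alpha)-4$ to every face, giving a total of $-8$. I then design redistribution rules that push charge from $5^+$-faces and $5^+$-vertices to $3$-faces, $4$-faces, and $2$-vertices, scaled by how many $2$-vertices or $2$-threads sit on each small face. Using the reducible configurations, I check that every vertex and every face ends with non-negative charge; since the sum is preserved, this contradicts the $-8$ total. Care is required because the parameter $\ell=3$ makes the $3$-facial neighborhood of an edge reach several faces away, so the rules will have to move charge across faces joined by short threads, not only between a face and its boundary.

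The principal obstacle is undoubtedly the design of the reducible configurations. With $\ell=3$, an uncolored edge on a face $\alpha$ can conflict with edges three steps away along the boundary of $\alpha$ \emph{and} along every adjacent face, so naive deletion arguments leave very small lists; the reductions must be tuned so that, after removing $E$, each edge of $E$ recovers at least $d_{\mathrm{conf}}(e)+1$ colors, or the conflict graph restricted to $E$ is a tree/forest/bipartite object amenable to Hall's theorem. Balancing this list-size bookkeeping against a discharging scheme strong enough to force a contradiction on every face type up to some threshold length (beyond which positive charge is automatic) will be the bulk of the technical work.
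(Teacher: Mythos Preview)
Your proposal does not address the stated theorem at all. The statement you were asked to prove is Hall's Theorem on matchings in bipartite graphs, a classical 1935 result that the paper merely \emph{cites} as a preliminary tool and does not prove. What you have written is instead a proof outline for Theorem~\ref{thm:main} (the main result that every plane graph admits a $3$-FEC with at most $10$ colors), complete with minimal counterexample, reducible configurations, and discharging. None of that bears on the bipartite-matching statement.

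If the task is genuinely to supply a proof of Hall's Theorem, the standard argument is short and self-contained: necessity of the Hall condition is immediate, and sufficiency follows by induction on $|A|$, splitting on whether some proper nonempty $S\subsetneq A$ satisfies $|N(S)|=|S|$ (if so, match $S$ to $N(S)$ inductively and verify the Hall condition on $A\setminus S$ versus $B\setminus N(S)$; if not, match any $a\in A$ to an arbitrary neighbor and induct). Your discharging sketch, whatever its merits for Theorem~\ref{thm:main}, is simply off-target here.
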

%In other words, if a partial proper vertex coloring of a graph is such that for every set of $k$ non-colored vertices 
%has at least $k$ available colors (i.e., each color can be assigned to at least one vertex),
%then all the vertices can be colored with distinct colors.
In other words, if a partial proper vertex coloring $\sigma$ of a graph $G$ 
with a color set $C$ and $n$ non-colored vertices is such that for every $k \in [1,n]$ 
and for every set $S \subseteq V(G)$ of $k$ non-colored vertices 
there is a set $A(S) \subseteq C$ with $|A(S)| \ge k$ 
such that each color $c \in A(S)$ can be used as a color for at least one vertex $v \in S$ 
(i.e., colors of the colored neighbors of $v$ are distinct from $c$), then $\sigma$ 
can be extended to a proper vertex coloring of the whole $G$.
Let us note that Theorem~\ref{thm:Hall} was already successfully used in the context of facial colorings~\cite{HavKraSerSkr10}.

Finally, we recall another tool for determining if one can always choose colors from the lists of 
available colors such that all conflicts are avoided. 
The result, due to Alon~\cite{Alo99}, is also referred to as the \textit{Combinatorial Nullstellensatz}.
\begin{theorem}[Alon~\cite{Alo99}]
	\label{thm:nullstellensatz}
	Let $\mathbb{F}$ be an arbitrary field, and let $P = P(X_1,\ldots ,X_n)$ be a polynomial in $\mathbb{F}[X_1,\ldots ,X_n]$.
	Suppose that the coefficient of a monomial $\prod_{i=1}^n X_i^{k_i}$, 
	where each $k_i$ is a nonnegative integer, is non-zero in $P$ and the degree $\deg(P)$ of $P$ equals $\sum_{i=1}^n k_i$.
	Moreover, if $S_1,\ldots ,S_n$ are any subsets of $\mathbb{F}$ with $|S_i| > k_i$, 
	then there exist $s_1\in S_1,\ldots ,s_n\in S_n$ such that $P(s_1,\ldots ,s_n)\not = 0$.
\end{theorem}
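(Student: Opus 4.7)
The plan is to argue by contradiction: assume the theorem fails and let $G$ be a counterexample that is minimal with respect to a suitable well-founded order on $(|V(G)|,|E(G)|)$. Standard reductions — handling bridges, cut vertices, pendant edges, parallel edges and loops — let us assume that $G$ is $2$-connected, so each face boundary is a closed walk of cleanly identifiable length, and small-face configurations can be described by vertex degree sequences. Fix a color set of size $10$. The target is an \emph{unavoidable set} of \emph{reducible configurations}: local substructures such that (i) every $2$-connected plane pseudograph contains at least one of them, and (ii) the presence of any such configuration in $G$ allows a $3$-FEC of a strictly smaller plane graph to be extended to all of $G$, contradicting minimality.

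For the extension step I would rely on the toolbox developed in Section~\ref{sec:prel}. A typical reduction deletes a short thread, contracts a short face via $G/\alpha$, or removes a cluster of edges around a light vertex. One then takes a $10$-coloring of the resulting smaller plane graph by minimality, reads off for each removed edge $e$ its list $A(e)$ of colors avoiding the colored edges in its $3$-facial neighborhood, and must list-color the conflict graph on the uncolored edges. When the list sizes strictly exceed the corresponding degrees in the conflict graph, extension is immediate; in borderline cases where $|A(e)|=d(e)$, Theorem~\ref{thm:list-assignment} finishes as long as some component is not a Gallai tree; in genuinely tight situations one either invokes Hall's theorem to pick distinct colors from a system of critical edges, or verifies a non-vanishing graph polynomial via the Combinatorial Nullstellensatz (Theorem~\ref{thm:nullstellensatz}) to guarantee a good joint choice. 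A recurring trick is to first erase one or two already-colored edges near the configuration to enlarge the lists before recoloring.

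With enough reducible configurations in hand, I would close the argument by \textbf{discharging}. Assign initial charge $\mu(v) = d(v) - 4$ to each vertex and $\mu(\alpha) = \ell(\alpha) - 4$ to each face; Euler's formula gives a total charge of $-8$. Discharging rules then shuffle charge from ``rich'' objects (long faces and large-degree vertices) to the ``poor'' objects (short faces and $2$- and $3$-vertices, which populate the reducible configurations). The rules have to be designed so that, after redistribution, every vertex and every face of $G$ has non-negative final charge, which contradicts the negative total. Since a $2$-thread at facial-distance at most $3$ can interact with several neighboring faces simultaneously, the rules will need to track carefully the distribution of $2$-vertices; the quantities $n_2(v)$ and $n_2(\alpha)$ defined in Section~\ref{sec:prel} will do much of the bookkeeping.

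The principal obstacle, and the reason the case $\ell = 3$ is harder than $\ell\le 2$, is the size of the $3$-facial neighborhood: an edge on a large face can be in conflict with six other edges on each incident face plus additional edges reached across shared threads, so reducible configurations must be stable against long strings of pre-colored neighbors. I expect the hardest local step to be verifying extendability for configurations where several short faces share vertices, so that the uncolored conflict graph is dense and every list is exactly the size of the corresponding degree; this is precisely the regime where Theorem~\ref{thm:nullstellensatz} seems indispensable. The hardest global step will be engineering a discharging scheme that simultaneously neutralizes every residual ``bad'' face pattern while keeping the bookkeeping tight enough to finish with non-negative charges everywhere; experience with the cases $\ell=1,2$ and with analogous cyclic-coloring proofs suggests this will force a rather long and delicate case analysis, but no step beyond the ones sketched above.
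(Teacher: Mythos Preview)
Your proposal addresses the wrong statement. The theorem labeled \texttt{thm:nullstellensatz} is Alon's Combinatorial Nullstellensatz, a purely algebraic statement about polynomials over a field: if a polynomial $P \in \mathbb{F}[X_1,\dots,X_n]$ has a nonzero coefficient on a monomial of maximal total degree, then $P$ does not vanish identically on any sufficiently large grid $S_1\times\cdots\times S_n$. The paper does not prove this result at all; it is quoted from~\cite{Alo99} as a black-box tool in Section~\ref{sec:prel} and then \emph{applied} in the proofs of Lemmas~\ref{lem:4vert2verts}, \ref{lem:9face2vertex}, and~\ref{lem:10face2vertex}.

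What you have written is instead a (reasonable) high-level sketch of the proof of Theorem~\ref{thm:main}, the main $3$-FEC result. That argument has nothing to do with establishing Theorem~\ref{thm:nullstellensatz} itself: minimal counterexamples to a plane-graph coloring statement, reducible configurations, and discharging cannot prove an assertion about polynomials over arbitrary fields. A correct proof of the stated theorem would proceed as in Alon's original paper, e.g., by repeated polynomial division by the univariate polynomials $\prod_{s\in S_i}(X_i-s)$ to reduce to a polynomial of small degree in each variable, followed by an induction on $n$ showing that such a reduced polynomial cannot vanish on the full grid unless it is identically zero. None of that machinery appears in your proposal.
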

Namely, in our particular case of the $3$-facial edge-coloring, 
we assign a variable $X_i$ to every edge $e_i$ that we want to color (for $1 \le i \le k$),
and define a polynomial $P(X_1,\dots,X_k)$ such that every pair of $3$-facially adjacent edges is represented by a term $(X_i - X_j)$ in $P$.
If there is a monomial (of a proper degree) of $P$ with a non-zero coefficient, 
then there exists a coloring of the considered edges.

\section{Proof of \cref{thm:main}}
\label{sec:proof}

We prove the theorem by contradiction; 
namely, we suppose that there exists a minimal counterexample to the theorem, 
i.e., a graph $G$ (minimal according to the number of vertices)
that does not admit a \fec with at most $10$ colors.
We first determine some structural properties of $G$ in Section~\ref{sec:struct}, 
and then, in Section~\ref{sec:disc}, we use the discharging method to show that a graph $G$ with specified properties cannot exist, 
hence obtaining a contradiction.

\subsection{Structure of a minimal counterexample}
\label{sec:struct}

First, we show that there are no cut vertices in $G$.
Note that throughout the paper, for simplicity, we do not distinguish vertices and edges
of $G$ and the graphs obtained by modifying $G$.
%Lemma 1
\begin{lemma}
	\label{lem:2-con}
	$G$ is $2$-connected.
\end{lemma}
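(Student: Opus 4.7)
The plan is to argue by contradiction. Suppose $G$ has a cut vertex $v$. Since $v$ is a cut vertex, $G - v$ is disconnected; pick one of its connected components $C$ and set $G_2 = G[V(C) \cup \{v\}]$ and $G_1 = G - V(C)$, so that $G = G_1 \cup G_2$ with $V(G_1) \cap V(G_2) = \{v\}$. Both $G_1$ and $G_2$ are plane pseudographs on strictly fewer vertices than $G$, so by the minimality of $G$ each admits a \fec with at most $10$ colors; call them $\sigma_1$ and $\sigma_2$. The goal is to glue these into a \fec of $G$, producing a contradiction.

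Next I would analyze the face structure. Since $G_2$ is connected and shares only $v$ with $G_1$, the planar embedding of $G$ places $G_2$ inside some face $\alpha_1$ of $G_1$ incident with $v$. The only face of $G$ whose boundary walk contains edges from both $G_1$ and $G_2$ is the ``merged face'' $\alpha$ obtained by joining $\alpha_1$ with the face $\alpha_2$ of $G_2$ that faces outward in this embedding. Consequently cross-constraints between $\sigma_1$ and $\sigma_2$ concern only edges of $\alpha$ lying within facial-distance $3$ of $v$: at most three edges on each of the two sides of $v$ on the boundary of $\alpha_1$, and similarly on the boundary of $\alpha_2$. Enumerating distances on the boundary walk of $\alpha$ yields at most $12$ conflict pairs, each of the form ``some $b_i \in E(G_2)$ must have a different color from some $a_j \in E(G_1)$''.

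The main step is to produce a permutation $\pi$ of the color set $[1, 10]$ so that $\pi \circ \sigma_2$ is compatible with $\sigma_1$. For each color $c$ assigned by $\sigma_2$ to one of the (at most six) constrained edges of $G_2$, let $F(c)$ be the set of $\sigma_1$-colors appearing on edges of $G_1$ that conflict with some constrained edge of $G_2$ of color $c$. The constrained edges of $G_2$ are pairwise within facial-distance $3$ on $\alpha_2$ except for three ``far'' pairs lying on opposite sides of $v$ at facial-distance $4$ or $5$, so the \fec condition on $G_2$ forces them to use at least four distinct colors; a short case check then shows $|F(c)| \le 3$ for every $c$. Since at most six distinct colors appear, Hall's theorem (\cref{thm:Hall}) applied to the bipartite graph with these colors on one side, $[1,10]$ on the other, and $c$ joined to $[1,10] \setminus F(c)$, produces the required matching: for every subset $S$ of these colors with $|S| \le 6$, one has $|\bigcap_{c \in S} F(c)| \le 3 \le 10 - |S|$. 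Extending the matching to a permutation $\pi$ of $[1,10]$ and combining $\sigma_1$ with $\pi \circ \sigma_2$ gives a \fec of $G$ with $10$ colors, contradicting the choice of $G$.

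The hardest part will be handling the degenerate configurations, where one or both of the faces $\alpha_1, \alpha_2$ are too short to carry six distinct constrained boundary edges --- for instance when $G_2$ is a single pendant edge, a loop at $v$, or a pair of parallel edges incident with $v$. In these cases several of the indexed constrained edges $b_i$ coincide, which can inflate some $F(c)$ to size five or six; but the number of distinct colors on the constrained edges drops correspondingly, so Hall's condition survives after a brief case-by-case verification. Handling the pendant case first is particularly clean: the unique uncolored edge has at most six $3$-facially adjacent colored edges, leaving at least four free colors for a greedy extension.
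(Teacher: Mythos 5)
Your overall plan coincides with the paper's: split $G$ at the cut vertex $v$ into $G_1$ and $G_2$, color both by minimality, and then permute the colors of $\sigma_2$ to eliminate the boundedly many cross-conflicts, which can only occur along facial trails through $v$. (The paper removes the conflicts by swapping at most two color classes of $\sigma_2$ into unused colors, whereas you build a full permutation via \cref{thm:Hall}; that mechanical difference is fine.) The genuine gap is in your structural step: you pick an \emph{arbitrary} component $C$ of $G-v$ and assert that exactly one face of $G$ (your merged face $\alpha$) contains edges of both $G_1$ and $G_2$, with two sides at $v$, at most six constrained edges of $G_2$, at most twelve conflict pairs, at least four distinct constrained colors, and $|F(c)|\le 3$. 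For an arbitrary choice of $C$ this is false. The connectedness of $C$ only guarantees that $C$ lies in a single face of $G_1$; when $G-v$ has three or more components (or $G$ has loops at $v$ --- looplessness is proved only \emph{after} this lemma, using it), the vertex $v$ is typically a cut vertex of $G_1$, that face meets $v$ in several angular sectors, and the edges from $v$ into $C$ need not be consecutive in the rotation at $v$. Concretely, let $G_1$ be two triangles sharing $v$ and let $G_2$ be a cycle through $v$ drawn so that it encloses one of the triangles: then both the outer face and the face between $G_2$ and the enclosed triangle contain edges of both parts, every constrained edge of $G_2$ is within facial-distance $3$ of up to six $G_1$-edges, $|F(c)|\le 3$ fails, the number of constrained colors is no longer bounded by six in general, and your verification of Hall's condition (which needs $|\bigcap_{c\in S}F(c)|\le 10-|S|$) no longer follows from what you prove. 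Your closing paragraph on degenerate cases treats only a short $G_2$ or short face portions, not this multi-component phenomenon.

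The missing idea is a careful choice of the component, which is precisely what the paper's (terse) stipulation in choosing $H$ accomplishes: one must take $C$ ``innermost'', i.e.\ such that the edges from $v$ into $C$ form a single block of the cyclic order of edges around $v$, equivalently such that the rest of $G$ is embedded in a single angular region of $G[V(C)\cup\set{v}]$ at $v$. Such a component always exists, because connected subgraphs of $G-v$ attached to $v$ cannot interleave around $v$ in a plane embedding (a Jordan-curve argument), so the attachment pattern is non-crossing and has an innermost member. With that choice there are exactly two $G_1$--$G_2$ transitions, both on one face of $G$, your distance bookkeeping, the ``three far pairs'' analysis, and $|F(c)|\le 3$ become correct (outside the short-face degeneracies you already flag), and the Hall-plus-permutation step goes through. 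Without this choice --- or an argument handling several mixed faces --- the proof as written does not establish the lemma.
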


\begin{proof}
	Suppose the contrary and let $v$ be a cut vertex of $G$.  	
	There exists a component $H$ of $G - v$ such that the vertex $v$ is in the subgraph $G_1$ of $G$ 
	induced by the vertex set $V(H) \cup \set{v}$ incident with the unbounded face. 
	Let $G_2$ be the subgraph of $G$ induced by the vertex set $V(G) \setminus V(H)$. 
	By the minimality of G, there exist a \fec $\sigma_1$ of $G_1$ 
	and a \fec $\sigma_2$ of $G_2$ with the same set $C$ of at most $10$ colors.
	
	Consider the set $E_1^1$ of edges of the unbounded face of $G_1$ that are incident
	with $v$ (note that $1 \le |E_1^1| \le 2$) and the set $E_1^j$ of edges of $G_1$ 
	that are in $G_1$ at facial-distance $j-1$ from the closest edge of $E_1^1$, for $j = 2,3$. 
	Furthermore, consider the set $E_2^j$ of edges of $G_2$ that are in $G$ at facial-distance $j$ 
	from the closest edge of $E_1^1$, for $j = 1,2,3$. 
	For the set $C_i^j$ of colors of edges in $E_i^j$ we have $|C_i^j| \le 2$, 
	and we may assume without loss of generality that $|C_1'| \le |C_2'|$ 
	for $C_i' = C_i^1 \cup C_i^2 \cup C_i^3$, for $i = 1,2$.
	If $|C_1'| + |C_2'| \le 10$, then (again without loss of generality)
	$C_1' \cap C_2' = \emptyset$ and so the common extension of $\sigma_1$ and $\sigma_2$ 
	is a \fec of G with the set of colors $C$, a contradiction.
	
	So, $5 \le |C_1'| \le 6$, $|C_2'|=6$, $|C_2^1| = |C_2^2| = |C_2^3| = 2$ and $E_2^3 = \set{e_1,e_2}$.
	Let $C(e)$/$C[c]$ for $e \in E(G_2)$/$c \in C$ be the color class of $\sigma_2$ containing the edges of $G_2$ colored with $\sigma_2(e)$/$c$.
	Since $1 \le p = |\set{\sigma_2(e_1)} \cup \set{\sigma_2(e_2)}| \le 2$, 
	and the color set $C^* = C \setminus (C_1^1 \cup C_1^2 \cup C_1^3 \cup C_2^1 \cup C_2^2)$ is of size at least $10 - 4 \cdot 2 \ge p$,
	there is a $p$-element set $\set{c_j \ : \ j \in [1,p]} \subseteq C^*$.
	Now recolor $G_2$ using the permutation $\pi$ of $C$ that induces the permutation of color classes of $\sigma_2$,
	under which the color classes $C(\sigma_2(e_j))$ and $C[c_j]$ are interchanged for each $j \in [1,p]$,
	and all remaining color classes are fixed.
	It is easy to see that the common extension of $\sigma_1$ and $\pi \circ \sigma_2$ is a \fec of $G$ with the color set $C$,
	a contradiction.
\end{proof}

From the above, we also infer that there are no pendant vertices in $G$.
\begin{corollary}
	\label{lem:min-deg-2}
	The minimum degree of $G$ is at least $2$.
\end{corollary}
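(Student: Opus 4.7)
The plan is to derive the corollary immediately from \cref{lem:2-con}, together with a brief case analysis handling the degenerate situations permitted by the fact that $G$ may be a pseudograph. I would argue by contradiction: suppose there is a vertex $v\in V(G)$ with $d(v)\le 1$. Since a loop at $v$ contributes $2$ to $d(v)$, the vertex $v$ carries no loop, and any edge incident with $v$ has a second endpoint distinct from $v$. I would dispose of the case $d(v)=0$ first: an isolated vertex may be deleted, giving a smaller plane graph whose \fec (existing by minimality of $G$) already colors every edge of $G$, contradicting the choice of $G$.

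For the main case $d(v)=1$, let $e=uv$ be the unique edge at $v$. I would then split on whether $V(G)=\{u,v\}$. If so, $G$ is an edge-disjoint union of the edge $e$ together with (possibly) a loop at $u$; it is straightforward to check that such a graph has a \fec with at most two colors, contradicting the assumption that $G$ is a counterexample. Otherwise $V(G)\setminus\{u,v\}\ne\emptyset$, and every path from $v$ to a vertex outside $\{u,v\}$ must pass through $u$, since $e$ is the only edge incident with $v$. Consequently $G-u$ is disconnected (with $v$ isolated from the remaining vertices), so $u$ is a cut vertex of $G$, contradicting \cref{lem:2-con}.

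There is essentially no obstacle here, the corollary being a direct structural consequence of $2$-connectivity; the only care required is in the pseudograph setting, where loops and parallel edges must be addressed, and in the trivially small configurations that are killed by the minimality of $G$.
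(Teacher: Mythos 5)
Your argument is correct and follows the same route the paper intends: the corollary is stated without proof as an immediate consequence of \cref{lem:2-con}, via exactly the observation that a vertex of degree at most $1$ would either live in a trivially colorable graph or force its neighbor to be a cut vertex. Your explicit handling of the degenerate pseudograph cases (isolated vertices, the two-vertex configuration) is more detail than the paper supplies but is consistent with it; the only nitpick is that the two-vertex case with several loops at $u$ may need more than two colors, though certainly far fewer than ten, so the conclusion stands.
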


Similarly, using \cref{lem:2-con}, we show there are no loops in $G$.
%Lemma 2
\begin{lemma}
	\label{lem:loops}
	$G$ is loopless.
\end{lemma}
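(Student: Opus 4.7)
The plan is to derive a contradiction by deleting the loop, coloring what remains by minimality, and then assigning a color to the loop itself. Suppose for contradiction that $G$ contains a loop $e$ at a vertex $v$. Since $G$ is $2$-connected by \cref{lem:2-con}, no vertex of $G$ other than $v$ can lie on both sides of the closed curve $e$ in the plane---otherwise $v$ would be a cut vertex. Hence $e$ bounds on one side a $1$-face $\alpha$ whose boundary walk consists of the single edge $e$, and on the other side a face $\beta$ whose boundary walk contains $e$ precisely once.

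Set $G' = G - e$. I would obtain a \fec $\sigma'$ of $G'$ with at most $10$ colors by invoking the minimality of $G$, strengthened if needed to be lexicographic in $(|V(G)|,|E(G)|)$: deleting a loop preserves $2$-connectivity (and does not disconnect $G$, since $v$ has other edges by \cref{lem:min-deg-2} together with the above), so if $G'$ were itself a counterexample it would be a strictly smaller one. Deleting $e$ merges $\alpha$ and $\beta$ into a single face whose walk is that of $\beta$ with the occurrence of $e$ removed, and therefore the facial distance between any two edges of $G'$ can only decrease when passing from $G$ to $G'$. Consequently $\sigma'$ is automatically a valid partial \fec of $G$ on $E(G)\setminus\{e\}$.

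It then remains to color $e$. Since $\alpha$ contains no edges other than $e$, every edge $3$-facially adjacent to $e$ in $G$ lies on $\beta$'s boundary walk, and at most three such edges appear on each side of $e$ in that walk. Thus at most six distinct colors are forbidden for $e$, leaving at least $10 - 6 = 4$ available; assigning any of them to $e$ extends $\sigma'$ to a \fec of $G$ with $10$ colors, contradicting the choice of $G$. The only delicate point in the plan is the invocation of minimality for $G'$, which merely requires interpreting \emph{minimal counterexample} lexicographically in $(|V|,|E|)$; the combinatorial accounting of forbidden colors is routine, and even the degenerate situation in which $\beta$'s walk is very short only shrinks the count of forbidden colors, so no genuine obstacle arises.
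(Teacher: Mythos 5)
Your proof is correct and follows essentially the same route as the paper: it rules out loops not bounding a $1$-face via $2$-connectivity (\cref{lem:2-con}), deletes the loop, and extends a coloring of the smaller graph by noting that at most six edges are $3$-facially constrained with $e$, leaving at least four free colors. Your explicit remark about taking minimality lexicographically in $(|V|,|E|)$ merely makes precise what the paper's own edge-deletion step implicitly assumes, so no substantive difference remains.
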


\begin{proof}
	Suppose, to the contrary, that there is a loop $e$ in $G$.
	If $e$ bounds a $1$-face, then it is $3$-facially adjacent to at most $6$ edges in $G$
	and thus we obtain a \fec with at most $10$ colors of $G$ by removing $e$,
	coloring the obtained graph, and finally coloring $e$ with one of at least $4$ available colors.
	On the other hand, if $e$ does not bound a $1$-face, then its unique endvertex is a cut vertex in $G$,
	a contradiction to \cref{lem:2-con}.
\end{proof}

Hence, in $G$, every $k$-face is incident with $k$ distinct vertices and with $k$ distinct edges.
In the rest of the paper, we will mainly deal with $2$-vertices and small faces in $G$.
%Lemma 3
\begin{lemma}
	\label{lem:4vert2verts}
	A $4$-vertex in $G$ has at most three $2$-neighbors.
\end{lemma}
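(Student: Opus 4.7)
Suppose, for contradiction, that $v$ is a $4$-vertex of $G$ with four $2$-neighbors $u_1, u_2, u_3, u_4$ listed in the cyclic order around~$v$. Write $e_i = vu_i$, let $w_i$ denote the other neighbor of $u_i$, set $F_i = u_i w_i$, and let $\alpha_i$ be the face of $G$ incident with both $e_i$ and $e_{i+1}$ (indices modulo~$4$). By \cref{lem:2-con}, $G$ is $2$-connected, so the four faces $\alpha_1, \alpha_2, \alpha_3, \alpha_4$ are pairwise distinct and each is bounded by a cycle.

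The plan is to delete $v$ and then extend a coloring of $G' := G - v$. Since $|V(G')| < |V(G)|$, the minimality of $G$ yields a \fec $\sigma$ of $G'$ with at most $10$ colors, and the goal is to extend $\sigma$ to a \fec of $G$ by coloring the four edges $e_1, e_2, e_3, e_4$. The conflict graph $H$ on these four edges---obtained by joining two of them whenever they are at facial-distance at most~$3$ in~$G$---is the $4$-cycle $e_1 e_2 e_3 e_4$: consecutive $e_i, e_{i+1}$ share the face $\alpha_i$ via~$v$ (facial-distance~$1$), while opposite $e_i, e_{i+2}$ share no face of~$G$. Since $C_4$ is neither a complete graph nor an odd cycle, it is not a Gallai tree, and \cref{thm:list-assignment} guarantees an $L$-coloring of $H$ whenever $|A(e_i)| \geq 2$ for every~$i$.

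Thus the main task is to prove $|A(e_i)| \geq 2$. The colored edges of $G'$ at facial-distance at most~$3$ from $e_i$ in~$G$ all lie on the boundaries of $\alpha_{i-1}$ and $\alpha_i$; they consist of the three edges $F_{i-1}, F_i, F_{i+1}$ (with $F_i$ shared by both faces) together with at most three further edges on each face boundary, yielding at most nine forbidden edges and hence $|A(e_i)| \geq 1$. To upgrade this to $|A(e_i)| \geq 2$, I would argue that among those nine forbidden edges at least one pair may be assumed to carry the same color under $\sigma$. A natural candidate is the pair $F_{i-1}, F_{i+1}$: these lie on disjoint pairs of faces of $G$ (namely $\{\alpha_{i-2},\alpha_{i-1}\}$ and $\{\alpha_i,\alpha_{i+1}\}$) and are under no $3$-facial constraint in $G'$ either, since any facial trail between them would have to repeat a pendant edge incident with one of $u_{i-1}, u_i, u_{i+1}$. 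Consequently, by locally recoloring $\sigma$ (which is possible because each of these pendant edges has plenty of room in its list of available colors in $G'$), the coincidence $\sigma(F_{i-1}) = \sigma(F_{i+1})$ can be enforced for every~$i$, reducing the number of distinct forbidden colors for $e_i$ to at most eight.

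With $|A(e_i)| \geq 2$ established, \cref{thm:list-assignment} yields an $L$-coloring of the $4$-cycle~$H$, and the resulting total coloring of $G$ is a \fec using at most $10$ colors, contradicting the choice of~$G$. The hardest step will be carrying out the simultaneous recolorings of $\sigma$ so that the coincidences $\sigma(F_{i-1}) = \sigma(F_{i+1})$ hold for all four indices without violating any $3$-facial constraint elsewhere in $G'$. This will likely require a case analysis driven by the lengths of the faces $\alpha_j$: short faces (length at most~$6$) already force many of the desired distinctness relations in $\sigma$ automatically, whereas longer faces require an explicit color swap along a chain of nearby edges.
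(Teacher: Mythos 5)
Your reduction to coloring only the four edges $e_1,\dots,e_4$ with conflict graph $C_4$ is fine as far as it goes, but the proof has a genuine gap at its load-bearing step: the claim that you can recolor $\sigma$ on $G-v$ so that $\sigma(F_{i-1})=\sigma(F_{i+1})$ holds \emph{for every} $i$, i.e.\ simultaneously $\sigma(F_1)=\sigma(F_3)$ and $\sigma(F_2)=\sigma(F_4)$. Nothing you say justifies this. After deleting $v$, the four pendant edges $F_1,\dots,F_4$ all lie on the single merged face, so each $F_i$ is only guaranteed about $4$ available colors (out of $10$) if uncolored, and the guaranteed sets for $F_1$ and $F_3$ may well be disjoint; "plenty of room" does not produce a \emph{common} color, and manufacturing one by recoloring edges further along the merged face is exactly the hard content you defer to an unperformed case analysis. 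The two coincidences also interact: recoloring $F_1$ changes the constraints on $F_2$ and $F_4$ whenever the faces $\alpha_4,\alpha_1$ are short. And note that at this point in the paper none of the structural lemmas on face lengths or separating cycles (\cref{lem:face-length}, \cref{lem:separating-cycle}) is available — they are proved \emph{after} \cref{lem:4vert2verts} — so the faces $\alpha_i$ may be $3$- or $4$-faces, with degenerate identifications such as $w_i=w_{i+1}$, in which case $F_i$ and $F_{i+1}$ become facially adjacent in $G-v$ and the recoloring freedom you rely on largely disappears. Your closing remark that "short faces force the desired relations automatically" points in the wrong direction: short faces are precisely where both the independence claims and the recoloring room are most fragile. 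So as written the argument only yields $|A(e_i)|\ge 1$, which is not enough for \cref{thm:list-assignment} on $C_4$.

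For comparison, the paper avoids recoloring altogether by deleting $v$ \emph{together with} its four $2$-neighbors, so that all eight edges $vv_i$ and $v_iv_{i+4}$ are uncolored; each then has at least $4$ available colors, and a single application of the Combinatorial Nullstellensatz (\cref{thm:nullstellensatz}) to the conflict polynomial on these eight edges — a suitable monomial has nonzero coefficient — extends the coloring. If you want to keep your deletion of $v$ only, you must actually carry out the recoloring/case analysis (or find a different way to guarantee two available colors per $e_i$), and you cannot lean on any face-length restrictions, since they are established only later in the paper.
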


\begin{proof}
	Suppose the contrary and let $v$ be a $4$-vertex adjacent to four $2$-vertices $v_1, v_2, v_3$ and $v_4$ in a clockwise order.
	Let $v_{i+4}$ be the other neighbor of $v_i$, for $1\le i\le 4$.
	Let $G'$ be the graph obtained from $G$ by deleting the vertices $v,v_1,v_2,v_3$ and $v_4$.
	By the minimality of $G$, there exists a \fec coloring $\sigma$ of $G'$ with at most $10$ colors.
	Notice that each of the edges $vv_i$ and $v_iv_{i+4}$ has at least $4$ available colors.
	Let $X_j$, $1\le j\le 8$ be a variable associated with the edge $vv_j$ if $j\le 4$ and the edge $v_{j-4}v_j$ otherwise.
	Let us now define the following polynomial, simulating the conflicts between the non-colored edges:
	\begin{align*}
		F(X_1,\ldots ,X_8) = & (X_1 - X_2)(X_1 - X_4)(X_1 - X_5)(X_1 - X_6)(X_1 - X_8)\\
						   \cdot & (X_2 - X_3)(X_2 - X_5)(X_2 - X_6)(X_2 - X_7)(X_3 - X_4)\\
						   \cdot & (X_3 - X_6)(X_3 - X_7)(X_3 - X_8)(X_4 - X_5)(X_4 - X_7)\\
						   \cdot & (X_4 - X_8)(X_5 - X_6)(X_5 - X_8)(X_6 - X_7)(X_7 - X_8).
	\end{align*}
	The coefficient of the monomial $X_1^3X_2^3X_3^3X_4^3X_5^2X_6^2X_7^2X_8^2$ in $F(X_1,\ldots ,X_8)$ is equal to $6$\footnote{We verified the values 
	of the coefficients with a computer program.}, 
	and thus by Theorem~\ref{thm:nullstellensatz} we can extend the coloring $\sigma$ to the coloring of $G$ using at most $10$ colors.
\end{proof}

Let $C$ be a cycle in $G$. 
We denote by $\int(C)$ the graph induced by the vertices lying strictly in the interior of $C$.
Similarly, we denote by $\ext(C)$ the graph induced by the vertices lying strictly in the exterior of $C$.
We say that $C$ is a \textit{separating cycle} if both, $\int(C)$ and $\ext(C)$, contain at least one vertex.
%Lemma 4
\begin{lemma}
	\label{lem:separating-cycle}
	There is no separating cycle of length at most $7$ in $G$.
\end{lemma}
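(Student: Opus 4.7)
The plan is to derive a contradiction by splitting $G$ along such a cycle, coloring each side by minimality, and then reconciling the two colorings on the cycle itself.

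Suppose, for the sake of contradiction, that $C$ is a separating cycle in $G$ with $k := |E(C)| \le 7$. Let $G_1$ denote the plane subgraph of $G$ obtained by retaining $C$ together with everything drawn in the closed interior of $C$, and let $G_2$ be defined symmetrically for the closed exterior. Since $C$ is separating, both $G_1$ and $G_2$ have strictly fewer vertices than $G$, and $C$ bounds a distinguished face $\alpha_i$ of length $k$ in each $G_i$, namely the face that replaces the region cut away.

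By the minimality of $G$, each of $G_1, G_2$ admits a $3$-facial edge-coloring $\sigma_i$ using at most $10$ colors from a common palette. The key observation is that any two edges lying on a face of length at most $7$ are at facial-distance at most $\lfloor 7/2 \rfloor = 3$; in particular, the $k$ edges of $C$ are pairwise within facial-distance $3$ through $\alpha_1$ (and through $\alpha_2$), so they receive $k$ pairwise distinct colors under both $\sigma_1$ and $\sigma_2$.

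Because $k \le 7 \le 10$, we may choose a permutation $\pi$ of the $10$-color palette satisfying $\pi(\sigma_2(e)) = \sigma_1(e)$ for every $e \in E(C)$; then $\sigma_2' := \pi \circ \sigma_2$ is still a $3$-facial edge-coloring of $G_2$ and it agrees with $\sigma_1$ on $E(C)$. Defining $\sigma$ by $\sigma_1$ on $E(G_1)$ and by $\sigma_2'$ on $E(G_2)$ therefore gives a well-defined $10$-coloring of $E(G)$. To verify that $\sigma$ is a $3$-facial edge-coloring of $G$, take any two edges $e, f$ of $G$ at facial-distance at most $3$: they share a facial trail within some face $\phi$ of $G$, and since $C$ is a cycle in the plane, $\phi$ lies wholly inside $C$ or wholly outside $C$; hence $\phi$ persists as a face of $G_1$ or of $G_2$, and $e, f$ receive distinct colors from $\sigma_1$ or $\sigma_2'$ accordingly. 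This yields a valid $3$-facial edge-coloring of $G$ with $10$ colors, contradicting the choice of $G$.

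The main obstacle in executing this plan is to guarantee that the two partial colorings $\sigma_1|_{E(C)}$ and $\sigma_2|_{E(C)}$ can be brought into agreement by a single permutation of the palette, and this reduces to showing that each of them is injective on $E(C)$. That injectivity is precisely where the hypothesis $k \le 7$ is used: once $k \ge 8$, diametrically opposite edges on $\alpha_i$ would lie at facial-distance $4$ and could legitimately share a color in $\sigma_i$, so the coincidence patterns on $E(C)$ induced by $\sigma_1$ and $\sigma_2$ would in general be incompatible and the splitting argument would break down.
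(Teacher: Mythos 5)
Your proof is correct and follows essentially the same route as the paper: split $G$ along $C$ into the interior-plus-$C$ and exterior-plus-$C$ subgraphs, color each by minimality, observe that since $\ell(C)\le 7$ the edges of $C$ are pairwise $3$-facially adjacent (hence rainbow) in both colorings, and permute one palette so the colorings agree on $E(C)$ before gluing. The extra verification you supply (that every face of $G$ lies wholly inside or outside $C$ and persists in $G_1$ or $G_2$) is exactly the implicit justification behind the paper's final sentence.
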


\begin{proof}
	Suppose the contrary and let $C$ be a separating cycle of length at most $7$. 
	Let $G_1$ be the subgraph of $G$ induced by the vertex set $V(\int(C))\cup V(C)$ 
	and let $G_2$ be the subgraph of $G$ induced by the vertex set $V(\ext(C))\cup V(C)$. 
	By the minimality of $G$, there exists a \fec $\sigma_1$ and a \fec $\sigma_2$ of $G_1$ and $G_2$, respectively, using the same set of at most $10$ colors. 
	Notice that, since the length of $C$ is at most $7$, every edge of $C$ is $3$-facially adjacent to all the other edges of $C$ in both $G_1$ and $G_2$. 
	Thus, all the edges of $C$ receive distinct colors in both $\sigma_1$ and $\sigma_2$. 
	Hence, permuting the colors in $\sigma_1$ such that the colors of the edges of $C$ coincide in $\sigma_1$ and in $\sigma_2$, 
	results in a \fec of $G$ with at most $10$ colors.
\end{proof}

Next, we show that $G$ does not contain small faces nor faces of length $8$.
%Lemma 5
\begin{lemma}
	\label{lem:face-length}
	Every face in $G$ is of length at least $5$.
\end{lemma}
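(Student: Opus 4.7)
I will argue by contradiction: assume $G$ contains a face $\alpha$ of length $k \in \{2,3,4\}$, and produce a $3$-FEC of $G$ with $10$ colors. By \cref{lem:2-con} and \cref{lem:loops}, the boundary $C_\alpha$ of $\alpha$ is a simple $k$-cycle, and \cref{lem:separating-cycle} implies that $C_\alpha$ is non-separating, so we may assume that $\alpha$ has empty interior. I then form the plane pseudograph $G' = G/\alpha$ by contracting every edge of $C_\alpha$: the $k$ vertices of $\alpha$ are merged into a single vertex $v^\star$, and the $k$ edges of $E(\alpha)$ are deleted (each becoming a loop during the process and being removed). Because $G'$ has $k - 1 \ge 1$ fewer vertices than $G$, the minimality of $G$ furnishes a $3$-FEC $\sigma'$ of $G'$ using at most $10$ colors.

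\textbf{Transferring and extending the coloring.} The restriction of $\sigma'$ to $E(G)\setminus E(\alpha)$ is a valid partial $3$-FEC of $G$: if two edges $f_1, f_2 \in E(G)\setminus E(\alpha)$ are within facial-distance $3$ in $G$ via some face $\gamma$, then $\gamma \ne \alpha$, so $\gamma$ survives (with at most one edge removed) as a face $\gamma'$ of $G'$ still containing $f_1$ and $f_2$; since contraction only shortens distances along $\gamma$, the edges remain $3$-facially adjacent in $G'$ and hence have $\sigma'(f_1) \ne \sigma'(f_2)$. It remains to assign colors to the $k$ edges of $E(\alpha)$. Every edge $e \in E(\alpha)$ lies on exactly two faces of $G$, namely $\alpha$ and a second face $\beta_e$, so its $3$-facial neighborhood is contained in $E(\alpha)\cup E(\beta_e)$. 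On $\alpha$ the remaining $k - 1$ edges are uncolored, while on $\beta_e$ there are at most $6$ edges within facial-distance $3$ of $e$ (at most three in each direction along the boundary walk of $\beta_e$). Hence $|L(e)| \ge 10 - 6 = 4 \ge k$. Since $k \le 4 \le 7$, any two edges of $E(\alpha)$ are at facial-distance at most $\lfloor k/2 \rfloor \le 2$ on $\alpha$ and so must receive pairwise distinct colors. As every list has size at least $k = |E(\alpha)|$, Hall's condition holds trivially, and \cref{thm:Hall} provides a proper color assignment for $E(\alpha)$. Combined with $\sigma'$, this yields a $3$-FEC of $G$ using at most $10$ colors, contradicting the choice of $G$.

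\textbf{Main difficulty.} The only delicate point is justifying that $3$-facial adjacencies among edges outside $E(\alpha)$ are preserved under the contraction of $\alpha$, so that $\sigma'$ transfers to a legitimate partial coloring of $G$; the remaining list-size bookkeeping and the Hall-type extension are routine.
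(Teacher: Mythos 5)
Your proposal is correct and follows essentially the same route as the paper: contract $\alpha$, invoke minimality, observe that each edge of $\alpha$ is $3$-facially adjacent to at most six colored edges and hence has at least $4$ available colors, and extend. The only (immaterial) difference is that you close with \cref{thm:Hall} where the paper invokes \cref{thm:list-assignment}; with $k\le 4$ uncolored edges each having a list of size at least $4$, either tool (or even a greedy argument) finishes the job.
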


\begin{proof}
	Suppose the contrary and let $\alpha$ be a face of $G$ of length at most $4$. 
	Let $G' = G/\alpha$ and let, by the minimality of $G$, $\sigma$ be a \fec of $G'$ using at most $10$ colors. 
	Next, observe that each edge of $\alpha$ is $3$-facially adjacent to at most six edges of $G'$ in $G$.
	%(note that here and in the rest of the paper under similar circumstances we do not distinguish edges of $G$ and $G'$ corresponding to each other).
	Thus, each edge of $\alpha$ has at least $4$ available colors. 
	By \cref{thm:list-assignment}, we can therefore extend the coloring $\sigma$ to obtain a \fec of $G$ using at most $10$ colors.
\end{proof}

Note that Lemmas~\ref{lem:2-con}, \ref{lem:loops}, \ref{lem:separating-cycle}, and~\ref{lem:face-length} 
imply that $G$ is a simple graph.

%Lemma 6
\begin{lemma}
	\label{lem:8face}
	There are no $8$-faces in $G$.
\end{lemma}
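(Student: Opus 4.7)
The plan is to mimic the approach of Lemma~\ref{lem:face-length}: contract $\alpha$ and extend the resulting coloring. Suppose for a contradiction that $\alpha$ is an $8$-face of $G$ and label its edges in cyclic order by $e_1, e_2, \ldots, e_8$. Setting $G' = G / \alpha$, the minimality of $G$ yields a \fec $\sigma$ of $G'$ using at most $10$ colors, and it suffices to extend $\sigma$ to the eight edges of $\alpha$. Any colored edge $3$-facially adjacent to $e_i$ must lie on the other face $\beta_i$ incident with $e_i$, and $\beta_i$ contributes at most six such edges; hence $|A(e_i)| \geq 10 - 6 = 4$ for every $i \in [1,8]$.

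Next I would describe the conflict graph $H$ on $\{e_1, \ldots, e_8\}$. Via $\alpha$, the pair $\{e_i, e_j\}$ lies in $H$ whenever $e_i$ and $e_j$ are at distance at most $3$ around $\alpha$, accounting for all $\binom{8}{2}$ pairs except the four antipodal pairs $\{e_i, e_{i+4}\}$, a total of $24$ edges. The key structural step is to show that no antipodal pair $\{e_i, e_{i+4}\}$ lies in $H$: if $e_i$ and $e_{i+4}$ were $3$-facially adjacent, they would both lie on some face $\gamma \neq \alpha$ with an arc of length at most $2$ between them on $\gamma$, and combining this $\gamma$-arc with the length-$3$ arc of $\alpha$ between their near endpoints yields a cycle $C$ of length at most $5$. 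If $C$ encloses a vertex, $C$ is a separating cycle of length at most $5$, impossible by Lemma~\ref{lem:separating-cycle}; otherwise $C$ bounds a face, necessarily of length $5$ by Lemma~\ref{lem:face-length}, and the analogous construction using the length-$5$ arc of $\alpha$ produces a separating cycle of length at most $7$ (since $\ell(\gamma) \geq 5$ forces the other arc of $\gamma$ to contribute an enclosed vertex), again contradicting Lemma~\ref{lem:separating-cycle}. Hence $H$ is exactly $K_8$ minus a perfect matching.

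To finish, I would apply Theorem~\ref{thm:nullstellensatz} to the polynomial
\[
P(X_1, \ldots, X_8) = \prod_{\{e_i, e_j\} \in E(H)} (X_i - X_j),
\]
which has degree $24 = 8 \cdot 3$. The plan is to verify, by a computer algebra calculation analogous to the one in Lemma~\ref{lem:4vert2verts}, that the coefficient of the monomial $X_1^3 X_2^3 \cdots X_8^3$ in $P$ is non-zero; combined with $|A(e_i)| \geq 4 > 3$, this produces a proper extension of $\sigma$ to the eight edges of $\alpha$, yielding a \fec of $G$ with at most $10$ colors and contradicting the choice of $G$. The main obstacles I anticipate are the planar case analysis that rules out $3$-facial adjacency between opposite edges of $\alpha$ and the computer-assisted verification that the prescribed coefficient of the $8$-variable polynomial $P$ does not vanish.
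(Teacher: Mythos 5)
Your proposal is correct, but it reaches the contradiction by a genuinely different route than the paper. The paper does not contract $\alpha$: it identifies one antipodal pair $e,f$ (forcing them to share a color), uncolors the remaining six edges of $\alpha$, which then have lists of size only $3$, and finishes with a dichotomy --- either some antipodal pair among the six can repeat a color, after which the four remaining edges induce an even cycle in the conflict graph with lists of size $2$ and \cref{thm:list-assignment} applies, or every such pair has disjoint lists, so unions of lists are large and \cref{thm:Hall} applies. You instead contract $\alpha$, keep all eight edges uncolored with lists of size at least $4$, observe that the conflict graph is $K_8$ minus a perfect matching (that is, $K_{4\times 2}$), and invoke \cref{thm:nullstellensatz} on the monomial $X_1^3\cdots X_8^3$. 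Both arguments rest on the same structural fact that antipodal edges of $\alpha$ are not $3$-facially adjacent, derived from \cref{lem:separating-cycle,lem:face-length}; your treatment of the sub-case in which the short cycle bounds a $5$-face is a bit loose (the parenthetical claim that $\ell(\gamma)\ge 5$ forces an enclosed vertex is not argued, and the degenerate possibility that the $7$-cycle bounds a face --- so that $G$ is just the $8$-cycle plus the $2$-path, trivially colorable with $10$ colors --- should be dismissed explicitly), though the paper's own justification of this point is comparably terse. Your planned computer check would succeed: the coefficient of $X_1^3\cdots X_8^3$ in the graph polynomial of $K_8$ minus a perfect matching equals $\pm 24$ (write the polynomial as the Vandermonde divided by the product over the four matched pairs and extract the coefficient; all $24$ contributing permutations carry the same sign), so the extension exists; alternatively one could quote the Erd\H{o}s--Rubin--Taylor result that $K_{s\times 2}$ is $s$-choosable. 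In exchange for one more computer-verified coefficient of the kind the paper already uses in \cref{lem:4vert2verts,lem:9face2vertex}, your argument avoids the Hall/Brooks case distinction; the paper's version avoids any coefficient computation and gets by with lists of size $3$.
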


In the proof of \cref{lem:8face} and several other proofs, we identify two edges of the same face $\alpha$ which are not in conflict in $G$.
The identification is always made in such a way that the resulting graph is still planar, 
i.e., the subgraph of $G$ induced by the edges of the involved face $\alpha$ is transformed to a dumbbell subgraph of the resulting graph.
%we identify the two pairs edges' end-vertices which are closer in $\alpha$.

\begin{proof}
	Suppose the contrary and let $\alpha$ be an $8$-face in $G$ and $e$ and $f$ be two edges at facial-distance $4$ on $\alpha$. 
	Let $G'$ be the graph obtained from $G$ by identifying the edges $e$ and $f$ and let $\sigma$ be a \fec of $G'$ using at most $10$ colors. 
	Observe that the edges $e$ and $f$ are not $3$-facially adjacent in $G$, 
	otherwise $G$ would contain 
	either a separating cycle of length at most $5$ (contradicting \cref{lem:separating-cycle}) or a $3$-face (contradicting \cref{lem:face-length}). 
	Therefore, after we uncolor every edge of $\alpha$ distinct from $e$ and $f$, 
	$\sigma$ induces a partial \fec of $G$ in which the edges $e$ and $f$ receive the same color. 
	
	To extend the coloring $\sigma$ to a coloring of $G$, notice that all six non-colored edges of $\alpha$ have at least $3$ available colors. 
	Furthermore, among those edges there are exactly three distinct pairs of edges at facial-distance $4$. 
	If we can color any such pair with the same color, then the remaining four edges will each have at least $2$ available colors.
	Furthermore, each of them is at facial-distance at most $3$ from exactly two other non-colored edges. 
	Applying \cref{thm:list-assignment}, we obtain a \fec using at most $10$ colors. 
	Therefore, we may assume that the union of available colors of any such pair is of size at least $6$, 
	with each edge having at least 3 available colors. 
	Thus, we can extend the coloring $\sigma$ to a \fec of $G$ by \cref{thm:Hall}.
\end{proof}

In the following lemmas, we give several properties of $2$-vertices in $G$.
First, we show that there are no (naturally defined) $3^+$-threads in $G$.
%Lemma 7
\begin{lemma}
	\label{lem:3thread}
	Every $2$-vertex in $G$ has at least one $3^+$-neighbor.
\end{lemma}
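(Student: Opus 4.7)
The plan is to derive a contradiction by assuming some $2$-vertex $v_2$ has both neighbors $v_1,v_3$ of degree $2$, producing a $3$-thread $v_1v_2v_3$ in $G$. Let $u$ and $w$ denote the remaining neighbors of $v_1$ and $v_3$, respectively. First I would verify that $u \neq w$: otherwise $uv_1v_2v_3$ would form a $4$-cycle, which is impossible as either a face (\cref{lem:face-length}) or as a separating cycle (\cref{lem:separating-cycle}); together with the fact that $G$ has no $3$-faces or short separating cycles, this makes the five vertices $u,v_1,v_2,v_3,w$ pairwise distinct.

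Next I would delete $v_1,v_2,v_3$ from $G$ and invoke minimality to obtain a \fec $\sigma$ of the resulting graph with at most $10$ colors. The task is to extend $\sigma$ to the four removed edges $e_1 = uv_1$, $e_2 = v_1v_2$, $e_3 = v_2v_3$, and $e_4 = v_3w$. Let $\alpha$ and $\beta$ denote the two faces of $G$ incident with the thread. A direct inspection of the facial walks of $\alpha$ and $\beta$ shows that, on each of these two faces, every $e_i$ has at most three non-thread edges within facial-distance $3$, so at most six already-colored edges in total lie in the $3$-facial neighborhood of $e_i$. Consequently, each $e_i$ has at least $10-6 = 4$ available colors.

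Since the edges $e_1,e_2,e_3,e_4$ appear consecutively on $\alpha$ (realizing pairwise facial-distances $1$, $2$, and $3$), their conflict graph is $K_4$, in which every vertex has degree $3$ while every list of available colors has at least $4 > 3$ elements. By \cref{thm:list-assignment}, this list-assignment admits an $L$-coloring, which extends $\sigma$ to a \fec of $G$ with $10$ colors and contradicts the choice of $G$. The principal point to verify is the edge-count in the $3$-facial neighborhood of each $e_i$, which needs some attention to degenerate configurations (for instance when $u$ or $w$ is itself a $2$-vertex, causing certain edges on $\alpha$ and on $\beta$ past $u$ or $w$ to coincide); in all such cases, however, coincidences can only decrease the number of distinct colored neighbors, so the lower bound of $4$ available colors still holds and the argument carries through uniformly.
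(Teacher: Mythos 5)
Your proof is correct, but it takes a genuinely different route from the paper. The paper contracts a single thread edge (it forms $G/u_1v$), notes that contraction can only decrease facial distances, so the coloring of the smaller graph induces a partial \fec of $G$ with exactly \emph{one} uncolored edge, and finishes greedily: since that edge lies in a run of three consecutive $2$-vertices, its $3$-facial neighborhood contains at most nine edges, so one of the ten colors is free. You instead delete the three $2$-vertices, leave four uncolored edges, bound each list from below by $4$, and color the $K_4$ conflict graph via \cref{thm:list-assignment} (which is slightly more machinery than needed -- a greedy extension already works, as the last edge colored has only three colored mates). The paper's reduction buys a one-line extension and sidesteps any discussion of how faces change; yours needs the availability count you carry out, plus one point you leave implicit: unlike contraction, deleting $v_1,v_2,v_3$ merges the two faces incident with the thread, so one should check that two \emph{colored} edges that are $3$-facially adjacent in $G$ remain $3$-facially adjacent in $G-\set{v_1,v_2,v_3}$, otherwise the induced coloring might not be a partial \fec of $G$. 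This does hold here: a facial path between two non-thread edges that uses a thread edge must contain the whole contiguous block of four thread edges and hence has length at least $5$, so every pair at facial-distance at most $3$ in $G$ is realized by a path avoiding the thread, and such a path survives on the merged face. With that remark added your argument is complete; your preliminary observation that $u\neq w$ (via \cref{lem:separating-cycle} and \cref{lem:face-length}) is fine and relies only on lemmas established before this one.
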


\begin{proof}
	Suppose to the contrary that $v$ is a $2$-vertex with neighbors $u_1$ and $u_2$,
	both being $2$-vertices.
	Let $G' = G/u_1v$ and let, by the minimality of $G$, $\sigma$ be a \fec of $G'$ using at most $10$ colors.
	Notice that facial-distances between the edges in $G$ are at least the distances between them in $G'$, and thus
	the coloring $\sigma$ induces a partial \fec of $G$ in which only the edge $u_1v$ is non-colored.
	However, there are only nine edges in the $3$-facial-neighborhood of $u_1v$, 
	and therefore at least one color is available for $u_1v$ 
	(to extend $\sigma$ to a \fec of $G$),
	a contradiction.
\end{proof}

%Lemma 8
\begin{lemma}
	\label{lem:2thread2vert}
	Let $(u,v)$ be a $2$-thread in $G$ incident with an $8^+$-face $\alpha$.
	Then, within facial-distance $3$ on the face $\alpha$, except from $u$, $v$ is adjacent only to $3^+$-vertices.	
\end{lemma}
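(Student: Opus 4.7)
The plan is to proceed by contradiction: suppose that some vertex $w\neq u$ at facial-distance at most $3$ from $v$ on $\alpha$ has $d(w)=2$. Let $u'$ and $v'$ denote the neighbors of $u$ and $v$ on $\alpha$ other than each other, and label the vertices of $\alpha$ in cyclic order as $\ldots, a_2, u', u, v, v', b_2, b_3, \ldots$ The vertices of $\alpha$ within facial-distance $3$ from $v$, excluding $v$ itself and $u$, are $v'$ (at distance $1$), $u'$ and $b_2$ (distance $2$), and $a_2$ and $b_3$ (distance $3$). By \cref{lem:3thread}, $u'$ and $v'$ must already be $3^+$-vertices, so the only candidates for $w$ are $a_2$, $b_2$, or $b_3$. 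Combining $\ell(\alpha)\ge 8$ with \cref{lem:8face} gives $\ell(\alpha)\ge 9$, so these candidates are genuinely distinct from one another and from $u,v,u',v'$.

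For each candidate position of $w$ I would construct a smaller plane graph $G'$ by deleting the $2$-vertices of the offending configuration (typically $u$, $v$, and $w$, together with all edges incident with them) and use the minimality of $G$ to obtain a \fec $\sigma$ of $G'$ with at most $10$ colors. Since deleting vertices cannot strictly increase facial distances among surviving edges, $\sigma$ is automatically a valid partial \fec of $G$ whose non-colored edges are precisely those incident with the removed vertices.

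The heart of the argument lies in extending $\sigma$ to the uncolored edges. For every such edge $e$ I would enumerate the edges of $G'$ that are $3$-facially adjacent to $e$ in $G$; the hypothesis $\ell(\alpha)\ge 9$, together with \cref{lem:face-length}, \cref{lem:8face}, and \cref{lem:separating-cycle}, rules out many a priori possible conflicts on $\alpha$ and on the face on the other side of the $2$-thread $(u,v)$, since these would force a short face, a forbidden $8$-face, or a short separating cycle. The resulting lower bounds on the number of available colors per uncolored edge should be sufficient to extend $\sigma$ to $G$ by applying \cref{thm:list-assignment} to the conflict graph of the uncolored edges; in the tightest subcases one instead sets up the bipartite auxiliary graph of (uncolored-edge, available-color) pairs and appeals to \cref{thm:Hall}, or exhibits a polynomial whose $(X_i-X_j)$ factors encode the facial-distance-$\le 3$ conflicts and applies \cref{thm:nullstellensatz}. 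Any successful extension contradicts the choice of $G$ as a minimum counterexample.

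The main obstacle will be the case analysis combined with the bookkeeping of available colors. The subcase $w=b_2$ looks the most delicate: three $2$-vertices $u,v,b_2$ then cluster on a short arc of $\alpha$, producing five mutually close uncolored edges whose conflict graph is dense, and one must use the minimum length of the other face of the $2$-thread (which is at least $5$ by \cref{lem:face-length}) to prevent that face from contributing additional short facial trails exhausting the lists. The subcases $w=a_2$ or $w=b_3$ should be milder, since the offending $2$-vertex then lies at facial-distance $3$ from $v$, leaving more slack in the color-availability count.
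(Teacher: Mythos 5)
There is a genuine gap at the very start of your reduction: the claim that ``deleting vertices cannot strictly increase facial distances among surviving edges, so $\sigma$ is automatically a valid partial \fec{} of $G$'' is backwards. Deletion can \emph{destroy} short facial trails and thereby \emph{increase} facial distances between surviving edges, so constraints that exist in $G$ may be absent in $G'$. Concretely, let $\beta$ be the face on the other side of $w$ (the face sharing with $\alpha$ the two edges at the offending $2$-vertex $w$), and let $g$ and $h$ be the edges of $\beta$ adjacent to these two edges at the two neighbors of $w$. In $G$ the trail $g$, (first edge at $w$), (second edge at $w$), $h$ along $\beta$ shows $g$ and $h$ are at facial distance $3$, so they must get distinct colors; but after you delete $u$, $v$, $w$, this trail disappears, $\alpha$ and $\beta$ merge into one long face, and nothing in $G'$ prevents $\sigma(g)=\sigma(h)$ (this happens whenever $\ell(\beta)\ge 7$, which is not excluded). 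Since $g$ and $h$ remain colored, no amount of list/Hall/Nullstellensatz work on the uncolored edges can repair this conflict, so the minimality of $G$ has not actually been exploited correctly.

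The paper avoids exactly this trap by \emph{contracting} rather than deleting: it sets $G' = G/\set{uu_1,uv,vv_1,v_1v_2,v_2v_3}$ (a facial path covering the thread and the offending $2$-vertex), and contraction can only decrease facial distances, so every conflict among surviving edges of $G$ persists in $G'$ and the induced partial coloring is automatically a valid partial \fec{} of $G$; in particular the edges $g$ and $h$ above become adjacent at the contracted vertex and stay properly colored. The extension step is then a short count ($|A(uu_1)|\ge 3$, $|A(uv)|,|A(vv_1)|\ge 4$, $|A(v_1v_2)|,|A(v_2v_3)|\ge 2$) together with the trick of giving $uu_1$ and $v_2v_3$ a common color when their lists intersect (legitimate since $\alpha$ is an $8^+$-face), and \cref{thm:Hall} in both branches. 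If you want to keep a deletion-based surgery, you would have to either uncolor the affected edges of the merged face and re-argue their recolorability, or first prove a separate statement guaranteeing the absence of such cross-conflicts (this is what \cref{lem:7faceSeparatingCycle} does for the later deletion arguments in the paper); as written, your step ``$\sigma$ is automatically a valid partial \fec{} of $G$'' fails.
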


\begin{proof}
	Suppose the contrary and let a $2$-thread $(u,v)$ be $3$-facially adjacent to a $2$-vertex $w \in \set{v_2,v_3}$ of $\alpha$.
	We use the labeling of vertices as depicted in Figure~\ref{fig:2thread2vert}.
	\begin{figure}[htp!]
		$$
			\includegraphics[scale=\figscale]{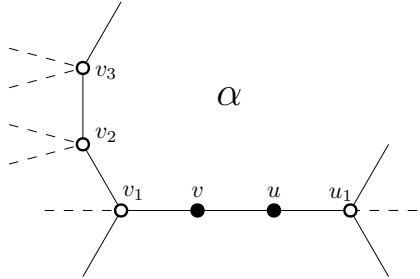}
		$$
		\caption{A reducible configuration with a $2$-thread and a $2$-vertex $w \in \set{v_2,v_3}$.}
		\label{fig:2thread2vert}
	\end{figure}	
	
	Let $G' = G/\set{uu_1,uv,vv_1,v_1v_2,v_2v_3}$ and let $\sigma$ be a \fec of $G'$.
	In the coloring of $G$ induced by $\sigma$, regardless which is $w$,
	we have $|A(v_2v_3)| \ge 2$, $|A(v_1v_2)| \ge 2$, $|A(vv_1)| \ge 4$, $|A(uv)| \ge 4$, and $|A(uu_1)| \ge 3$.
	If $A(uu_1) \cap A(v_2v_3) \neq \emptyset$, then we color $uu_1$ and $v_2v_3$ with the same color 
	(recall that they are not $3$-facially adjacent since $\alpha$ is an $8^+$-face),
	and color the remaining three edges by \cref{thm:Hall}.
	
	On the other hand, if $A(uu_1) \cap A(v_2v_3) = \emptyset$, 
	then in the union of available colors of the five non-colored edges we have at least $5$ colors, 
	and it is easy to see that again \cref{thm:Hall} can be applied to color all the edges of $G$, a contradiction.	
\end{proof}

Clearly, if every $2$-vertex incident with a $k$-face $\alpha$ has two $3^+$-neighbors, 
then $\alpha$ is incident with at most $\lfloor k/2 \rfloor$ vertices of degree $2$.
In the case of incident $2$-threads, we can further limit the number of $2$-vertices incident with a face using \cref{lem:2thread2vert}.
We will formalize this fact in the next corollary after defining some additional notions.

Let $n_2^t(\alpha)$ be the number of $2$-vertices incident with a face $\alpha$ that belong to $2$-threads.
A {\em $k$-path} of a face $\alpha$, $k \in \set{2,3^+}$, is a maximal (i.e., non-extendable) 
facial path in $\alpha$ composed of $k$-vertices. 
If $n_2(\alpha) > 0$, then the set $V(\alpha)$ of the vertices incident with $\alpha$ 
has, for some positive integer $p$, 
a partition $\set{V^i \ : \ i = 1,\dots,2p}$ 
such that the set $V^{2i-1}$ induces a $2$-path $P^{2i-1}$ of $\alpha$, 
and the set $V^{2i}$ induces a $3^+$-path $P^{2i}$ of $\alpha$ that follows $P^{2i-1}$
in the (say) clockwise orientation of $\alpha$ for each $i = 1,\dots,p$ 
(and $P^1$ follows $P^{2p}$). 
A {\em section} of $\alpha$ is a pair $(V^{2i-1},V^{2i})$, $i = 1,\dots,p$; 
the pair $(V^{2i-1}, V^{2i})$ is a {\em $j$-section} of $\alpha$ 
if $|V^{2i-1}| = j \in \set{1,2}$ (see \cref{lem:3thread}). 
Let $S_j(\alpha)$ denote the set of $j$-sections of $\alpha$, $j = 1,2$.
\begin{corollary}
	\label{cor:num2verts}
	For a $k$-face $\alpha$ of $G$, where $k \ge 8$ and $n_2(\alpha) > 0$, 
	we have 
	$$
		n_2(\alpha) \le \bigg\lfloor \frac{k}{2} \bigg\rfloor \quad\quad \textrm{and} \quad\quad |S_2(\alpha)| \le \bigg\lfloor \frac{k-2\cdot |S_1(\alpha)|}{5} \bigg\rfloor\,.
	$$
	Moreover, if $k = 11$ and $|S_2(\alpha)| > 0$, then $n_2(\alpha) \le 4$.
\end{corollary}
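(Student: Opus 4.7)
I will derive a single master inequality relating $k$, $|S_1(\alpha)|$, and $|S_2(\alpha)|$, and then read off the three claims as elementary arithmetic consequences.

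The first step is to lower-bound the lengths of the $3^+$-paths in the section decomposition of $\alpha$. By the maximality of sections, each such path $P^{2i}$ satisfies $|V(P^{2i})| \ge 1$. Moreover, whenever at least one of the two $2$-paths flanking $P^{2i}$ is a $2$-thread, \cref{lem:2thread2vert} (applicable since $k \ge 8$) forces $|V(P^{2i})| \ge 3$; otherwise the $2$-vertex at the far end of $P^{2i}$ would be within facial-distance $3$ of the endpoint of the adjacent $2$-thread. Set $p = |S_1(\alpha)| + |S_2(\alpha)|$ (the number of sections, equal to the number of $3^+$-paths), let $m_2$ be the number of $3^+$-paths adjacent to at least one $2$-section, and let $b_{22}$ be the number of $3^+$-paths flanked by $2$-sections on both sides. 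Double counting the boundaries incident to $2$-sections gives $m_2 = 2|S_2(\alpha)| - b_{22}$.

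The key observation is a bound on $b_{22}$: grouping the $2$-sections into $r$ maximal blocks of cyclically consecutive $2$-sections yields $b_{22} = |S_2(\alpha)| - r$ when $|S_1(\alpha)| \ge 1$ (blocks are separated by $1$-sections), while $b_{22} = |S_2(\alpha)|$ when $|S_1(\alpha)| = 0$. In particular, $b_{22} \le |S_2(\alpha)|$ always, and $b_{22} \le |S_2(\alpha)| - 1$ whenever $|S_1(\alpha)|, |S_2(\alpha)| \ge 1$. Plugging these into $k = n_2(\alpha) + \sum_i |V(P^{2i})| \ge (|S_1(\alpha)| + 2|S_2(\alpha)|) + (3m_2 + (p-m_2))$ and simplifying yields
\[
	k \;\ge\; 2|S_1(\alpha)| + 3|S_2(\alpha)| + 2 m_2 \;\ge\; 2|S_1(\alpha)| + 5|S_2(\alpha)|,
\]
with the strengthened form $k \ge 2|S_1(\alpha)| + 5|S_2(\alpha)| + 2$ when $|S_1(\alpha)|, |S_2(\alpha)| \ge 1$.

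The first two claims now follow immediately: $2 n_2(\alpha) = 2|S_1(\alpha)| + 4|S_2(\alpha)| \le k$ gives $n_2(\alpha) \le \lfloor k/2 \rfloor$, and $5|S_2(\alpha)| \le k - 2|S_1(\alpha)|$ gives the stated bound on $|S_2(\alpha)|$. For the final statement, assume $k = 11$ and $|S_2(\alpha)| \ge 1$: if $|S_1(\alpha)| = 0$, then $5|S_2(\alpha)| \le 11$ forces $|S_2(\alpha)| \le 2$ and so $n_2(\alpha) = 2|S_2(\alpha)| \le 4$; otherwise, the strengthened bound gives $2|S_1(\alpha)| + 5|S_2(\alpha)| \le 9$, which together with $|S_1(\alpha)|, |S_2(\alpha)| \ge 1$ forces $|S_2(\alpha)| = 1$ and $|S_1(\alpha)| \le 2$, hence $n_2(\alpha) \le 4$. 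The main obstacle is justifying the $+2$ improvement in the mixed case, which is indispensable: without it, the configuration $|S_1(\alpha)| = 3$, $|S_2(\alpha)| = 1$ would satisfy the weak master inequality at $k = 11$ and spuriously permit $n_2(\alpha) = 5$, so the third claim genuinely depends on the cyclic-block argument for $b_{22}$.
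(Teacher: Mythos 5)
Your proof is correct. For the two displayed inequalities you follow essentially the paper's route: the same section decomposition of $V(\alpha)$, the same use of \cref{lem:2thread2vert} to force at least three $3^+$-vertices next to each $2$-thread, and the resulting bound $k \ge 2|S_1(\alpha)| + 5|S_2(\alpha)|$, from which both floors follow. Where you genuinely diverge is the ``moreover'' part for $k=11$: the paper argues directly on the face, splitting into $|S_2(\alpha)| \in \{1,2\}$, using the six protected $3^+$-vertices around a unique $2$-thread together with \cref{lem:3thread} on the three remaining vertices (resp.\ observing for two threads that every non-thread vertex lies within facial-distance $2$ of a thread), whereas you upgrade the master inequality to $k \ge 2|S_1(\alpha)| + 5|S_2(\alpha)| + 2$ whenever both section types occur, via the count $m_2 = 2|S_2(\alpha)| - b_{22}$ and the cyclic-block bound $b_{22} \le |S_2(\alpha)| - 1$, and then finish by arithmetic. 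Your refinement implicitly uses \cref{lem:2thread2vert} symmetrically on both sides of a thread (so that a $3^+$-path \emph{preceding} a thread also has at least three vertices); this is legitimate, since a $2$-thread is an unordered pair and the paper itself invokes the lemma in exactly this two-sided way, though your phrase ``the $2$-vertex at the far end of $P^{2i}$'' should really refer to the first vertex of the next $2$-path. The trade-off: your strengthened inequality is a cleaner, reusable statement valid for all $k \ge 8$ (and you correctly identify that the $+2$ is indispensable, as $|S_1(\alpha)|=3$, $|S_2(\alpha)|=1$ survives the weak bound at $k=11$), while the paper's positional case analysis is shorter for the one case it needs.
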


\begin{proof}
	Let $\set{V^i \ : \ i = 1,\dots,2p}$ be the partition of $V(\alpha)$ as defined in the above paragraph.
	If $(V^{2i-1}, V^{2i}) \in S_1(\alpha)$, then $|V^{2i}| \ge 1$.
	On the other hand, if $(V^{2i-1}, V^{2i}) \in S_2(\alpha)$, then, by \cref{lem:2thread2vert}, $|V^{2i}| \ge 3$.
	Therefore,
	\begin{align}
		\label{eq:n2}
		k 	= \sum_{i = 1}^p \big( |V^{2i-1}| + |V^{2i}| \big) 
			\ge 2|S_1(\alpha)| + 5|S_2(\alpha)| 				
			= 2\big(|S_1(\alpha)| + 2|S_2(\alpha)|\big) + |S_2(\alpha)|\,.
	\end{align}
	From~\eqref{eq:n2} we infer that
	$$
		n_2(\alpha) = |S_1(\alpha)| + 2|S_2(\alpha)| \le \frac{1}{2}\big(k - |S_2(\alpha)|\big) \le \frac{k}{2}\,,
	$$
	implying $n_2(\alpha) \le \big\lfloor \frac{k}{2} \big\rfloor$,
	as well as 
	$$
		|S_2(\alpha)| \le \frac{1}{5}\big( k - 2|S_1(\alpha)| \big)\,,
	$$
	implying $|S_2(\alpha)| \le \big\lfloor \frac{k-2\cdot |S_1(\alpha)|}{5} \big\rfloor$.
	
	Now suppose that $k = 11$ and $|S_2(\alpha)| > 0$, which implies $|S_2(\alpha)| = q \in \set{1,2}$,
	since otherwise $n_2(\alpha) \ge 2q \ge 6$ contradicting that $n_2(\alpha) \le \lfloor \frac{11}{2} \rfloor = 5$.
	
	If $q = 1$, then the number of $3^+$-vertices of $\alpha$, that are in $\alpha$ within facial-distance $3$ 
	from a vertex of the $2$-thread of $\alpha$, is 6 (by \cref{lem:2thread2vert}). 
	At most two of the three remaining vertices of $\alpha$ are $2$-vertices (by \cref{lem:3thread}),
	hence $|S_1(\alpha)| \le 2$ and $n_2(\alpha) \le 2 + 2\cdot 1 = 4$.
	
	If $q = 2$, consider an arbitrary vertex $x$ of $\alpha$ 
	that is not a part of a $2$-thread of $\alpha$. 
	The vertex $x$ is in $\alpha$ within facial-distance $2$ 
	from a vertex of at least one of the two $2$-threads of $\alpha$, 
	%(a consequence of \cref{lem:2thread2vert}), 
	and so, 
	by \cref{lem:2thread2vert}, $x$ is a $3^+$-vertex. 
	This reasoning leads to $n_2(\alpha) = 2 \cdot 2 = 4$.
\end{proof}

%\begin{corollary}
%	\label{cor:num2verts}
%	For a $k$-face $\alpha$, where $k \ge 8$, we have 
%	$$
%		n_2(\alpha) \le \bigg\lfloor \frac{k}{2} \bigg\rfloor \quad\quad \textrm{and} \quad\quad n_2^t(\alpha) \le 2\cdot\bigg\lfloor \frac{k}{5} \bigg\rfloor\,.
%	$$
	%More precisely,
	%\begin{align}
	%	\label{eq:2verts}
	%	n_2(\alpha) \le n_2^t(\alpha) + \bigg\lfloor \frac{1}{2} \bigg( k - \frac{5}{2}n_2^t(\alpha) \bigg) \bigg\rfloor\,.
	%\end{align}
%\end{corollary}

In the next lemma, we show that presence of $3$-vertices in some cases enables recoloring of certain edges.
%Lemma 9
\begin{lemma}
	\label{lem:3vert-recolor}
	Let $uv$ be an edge with $d(u)=3$, and let $uu_1$, $uu_2$ be the other two edges incident with $u$.
	Consider a partial \fec of $G$, in which the edge $uv$ is, and the edges $uu_1$, $uu_2$ are not colored.
	If $|A(uu_1) \cap A(uu_2)| = k$ for some $k \ge 3$,
	then there are at least $k-2$ colors in $A(uu_1) \cap A(uu_2)$ 
	such that each can be used to recolor the edge $uv$ in such a way that the result is again a partial \fec of $G$.
\end{lemma}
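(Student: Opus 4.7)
The plan is to identify and count the colors in $A(uu_1) \cap A(uu_2)$ that fail as recoloring options for $uv$. First I would note that recoloring $uv$ with $c \in A(uu_1) \cap A(uu_2)$ destroys the partial \fec exactly when some colored edge $e\ne uv$ at facial-distance at most $3$ from $uv$ already carries color $c$. If such an edge $e$ were also within facial-distance $3$ of $uu_1$ (respectively of $uu_2$), then $\sigma(e)$ would lie outside $A(uu_1)$ (respectively $A(uu_2)$), contradicting $c \in A(uu_1) \cap A(uu_2)$. Hence the \emph{bad} colors come exclusively from colored edges that are $3$-facially adjacent to $uv$ but $3$-facially adjacent to neither $uu_1$ nor $uu_2$, and it suffices to show that the number of such edges is at most two.

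Since $G$ is $2$-connected by \cref{lem:2-con}, the edge $uv$ is incident with two distinct faces $\beta$ and $\gamma$, containing respectively $uu_1$ and $uu_2$, and every edge at facial-distance at most $3$ from $uv$ lies on $\beta$ or $\gamma$. By symmetry I would analyse $\beta$. Labelling its edges cyclically as $e_0,\ldots,e_{t-1}$ with $t=\ell(\beta)$, $e_0=uv$, and $e_{t-1}=uu_1$, the facial-distance on $\beta$ between $e_i$ and $e_j$ equals $\min\{|i-j|,\,t-|i-j|\}$. Asking that $e_i$ is within facial-distance $3$ of $uv$ on $\beta$ gives $i\le 3$ or $i\ge t-3$, while asking that $e_i$ is at facial-distance at least $4$ from $uu_1$ on $\beta$ gives $3\le i\le t-5$. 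Combining these with $t\ge 5$ (by \cref{lem:face-length}) and $t\ne 8$ (by \cref{lem:8face}) forces $i=3$ and $t\ge 9$, so $\beta$ contributes at most the single candidate $e_3$. The analogous analysis on $\gamma$ produces at most one further candidate.

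With this in hand the conclusion is immediate: there are at most two bad edges, each forbidding at most one color of $A(uu_1)\cap A(uu_2)$, so at least $k-2$ of its colors remain usable as new colors for $uv$. The step I expect to need the most care is the passage from the global condition ``not $3$-facially adjacent to $uu_1$'' to the local condition ``not $3$-facially adjacent to $uu_1$ on $\beta$''; this is in fact harmless because strengthening the requirement to the local one only enlarges the candidate set, so the resulting upper bound of two on the number of bad edges (and hence of bad colors) is preserved.
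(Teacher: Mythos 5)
Your proof is correct and follows essentially the same route as the paper: the paper's entire argument is the observation that at most two edges in the $3$-facial neighborhood of $uv$ fail to be $3$-facially adjacent to $uu_1$ or $uu_2$, so at most two colors of $A(uu_1)\cap A(uu_2)$ can be blocked. You simply make that count explicit via the cyclic-distance analysis on the two faces incident with $uv$, which is a sound, more detailed verification of the same fact.
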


\begin{proof}
	%The common available colors for the edges $uu_1$ and $uu_2$ do not appear in their $3$-facial-neighborhoods.
	%On the other hand, 
	In the $3$-facial-neighborhood of $uv$, there are at most two edges which are not $3$-facially adjacent to $uu_1$ or $uu_2$, 
	which means that there are at least $k-2$ available colors for $uv$ from the intersection $A(uu_1) \cap A(uu_2)$.
\end{proof}

We continue by establishing properties about incidences of small faces.
First, we show that $5$-faces are not incident with small vertices.
%Lemma 10
\begin{lemma}
	\label{lem:5face4vertex}
	Every $5$-face in $G$ is incident only with $4^+$-vertices.
\end{lemma}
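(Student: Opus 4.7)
The plan is to argue by contradiction: assume a $5$-face $\alpha=v_1v_2v_3v_4v_5$ is incident with some vertex of degree at most $3$; by \cref{lem:min-deg-2} that degree is $2$ or $3$, and we may place such a vertex at $v_1$. The key observation driving the proof is that on a $5$-face every two edges lie within facial distance $2$, so all five edges of $\alpha$ must receive pairwise distinct colors in any \fec.

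First I would treat the case $d(v_1)=2$. Both $v_1v_2$ and $v_1v_5$ then lie on $\alpha$ and on a unique second face $\beta$, with $|\beta|\ge 5$ and $|\beta|\neq 8$ by \cref{lem:face-length} and \cref{lem:8face}. The natural reduction is $G':=G-v_1$, which by the minimality of $G$ admits a \fec $\sigma$; since deleting $v_1$ merges $\alpha$ with $\beta$ into one face and only shortens facial distances between surviving edges, $\sigma$ induces a valid partial \fec of $G$ with only $v_1v_2$ and $v_1v_5$ uncolored. A direct enumeration of the $3$-facial-neighborhoods on $\alpha\cup\beta$, handled separately for $|\beta|\in\{5,6,7\}$ and $|\beta|\ge 9$, bounds the colored neighbors of each of the two uncolored edges by $9$, giving $|A(v_1v_2)|,|A(v_1v_5)|\ge 1$. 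I would then invoke \cref{thm:Hall} to extend unless both lists coincide with a single color~$c$. To rule out this tight sub-case, I would use an alternative reduction: delete $v_1$ and draw a new edge $v_2v_5$ through $\beta$, yielding a strictly smaller graph whose \fec assigns to $v_2v_5$ a color that necessarily lies in $A(v_1v_2)\cap A(v_1v_5)$, and then exploit the no-$8$-face constraint to free an additional color for one of the two edges.

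For $d(v_1)=3$, let $u$ be the third neighbor and $\gamma_1,\gamma_2$ the two non-$\alpha$ faces at $v_1$. The same deletion $G':=G-v_1$ yields a partial \fec of $G$ with three uncolored, pairwise-conflicting edges $v_1v_2$, $v_1v_5$, $v_1u$. A careful count gives $|A(v_1v_2)|,|A(v_1v_5)|\ge 2$, and $|A(v_1u)|\ge 2$ whenever at least one of $\gamma_1,\gamma_2$ has length at most $6$; in that subcase \cref{thm:Hall} or \cref{thm:nullstellensatz} applied to $(X_1-X_2)(X_1-X_3)(X_2-X_3)$, whose monomial $X_1^2X_2$ has nonzero coefficient, completes the extension. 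When both $\gamma_i$ are long, a chord-addition through $\alpha$ (analogous to Case $1$) reduces the situation to the previous configuration.

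I expect the main obstacle to be the tight sub-case of $d(v_1)=2$ with $|\beta|$ large, where both $A(v_1v_2)$ and $A(v_1v_5)$ can a priori be singletons. Ruling out the coincidence of these singletons requires combining the chord-addition reduction with the structural restrictions from \cref{lem:separating-cycle}, applied to the cycle formed by the symmetric difference of the boundaries of $\alpha$ and $\beta$, together with \cref{lem:8face}; this delicate interplay is where the heart of the argument lies, while everything else is routine counting and a direct appeal to the extension tools collected in \cref{sec:prel}.
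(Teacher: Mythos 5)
Your reduction step contains a genuine gap that invalidates the whole plan. Deleting the low-degree vertex $v_1$ does \emph{not} only shorten facial distances between surviving edges; for pairs whose only short facial connection passes through $v_1$ it lengthens them. Concretely, in your case $d(v_1)=2$, let $e_2$ and $e_5$ be the edges of $\beta$ that follow $v_1v_2$ at $v_2$ and $v_1v_5$ at $v_5$. In $G$ they are at facial distance $3$ along $\beta$ (via $v_2v_1$, $v_1v_5$), so they conflict; but in $G-v_1$ the merged face connects them only through the path $v_2v_3v_4v_5$ of length $3$, i.e.\ at distance $4$, and when $\ell(\beta)$ is large there is no other short connection. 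A \fec of $G-v_1$ may therefore give $e_2$ and $e_5$ the same color, and the induced coloring of $G$ is not a partial \fec at all, so no extension argument can even begin. The same loss of conflicts occurs in your $d(v_1)=3$ case through the faces $\gamma_1,\gamma_2$. This is exactly why the paper contracts the whole face ($G'=G/\alpha$) rather than deleting a vertex: contraction can only decrease facial distances, hence preserves every conflict of $G$ among surviving edges, and yields five uncolored edges each with at least four available colors.

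Even granting the reduction, your quantitative claims are too weak for the tools you invoke. Two mutually conflicting edges with $|A|\ge 1$ cannot be finished by \cref{thm:Hall} without excluding equal singleton lists, and the proposed fix (adding a chord $v_2v_5$ through $\beta$) is unsubstantiated: nothing forces the color of the new edge to lie in $A(v_1v_2)\cap A(v_1v_5)$, and the same distance-preservation issue reappears for the modified graph. In the $d(v_1)=3$ case, three pairwise conflicting edges with lists of sizes $2,2,2$ need not be colorable (all lists equal to $\set{1,2}$ fail); Hall's condition is violated, and the monomial $X_1^2X_2$ of $(X_1-X_2)(X_1-X_3)(X_2-X_3)$ requires one list of size at least $3$, so \cref{thm:nullstellensatz} does not apply as you state. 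The paper's argument after contraction is different and is the part your plan misses: if the five lists of the edges of $\alpha$ do not all coincide, \cref{thm:Hall} finishes; if they all equal one $4$-set, then no vertex of $\alpha$ can have degree $2$ (its edges would have five available colors), so the bad vertex $v_1$ is a $3$-vertex, and \cref{lem:3vert-recolor} is used to recolor the edge $uv_1$ with a color from the common list, which injects a fifth color into $A(\alpha)$ while \cref{lem:separating-cycle} and \cref{lem:face-length} guarantee that the edges $v_2v_3$, $v_3v_4$, $v_4v_5$ keep their lists. That recoloring step, not the counting, is the heart of the lemma.
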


\begin{proof}
	Suppose the contrary and let $\alpha$ be a $5$-face of $G$ incident with a $3$-vertex $v_1$,	
	where the vertices are labeled as in Figure~\ref{fig:5face4verts}.
	(Note that $G$ is not the $5$-cycle $C_5$.)
	\begin{figure}[htp!]
		$$
			\includegraphics[scale=\figscale]{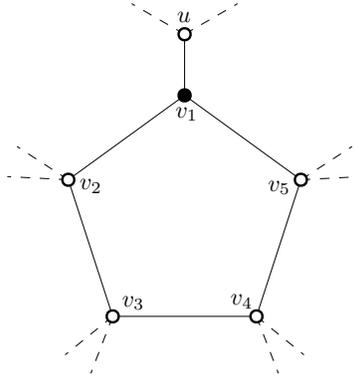}
		$$
		\caption{A reducible $5$-face incident with a $3$-vertex.}
		\label{fig:5face4verts}
	\end{figure}
	
	Let $\sigma$ be a \fec of $G'=G/\alpha$. 
	It induces a partial \fec of $G$ with the five edges of $\alpha$ being non-colored.
	Each of the non-colored edges has at least $4$ available colors.
	By \cref{thm:Hall}, if the union of the five sets of available colors contains at least $5$ distinct colors,
	then $\sigma$ can be extended to $G$.		
	Therefore, we may assume that $A(e)$ is the same for every $e \in E(\alpha)$, say $A(e) = [1,4]$. 
	In such a case, the face $\alpha$ is incident with $3^+$-vertices only: 
	if $d(v_i) = 2$ for some $i \in [2,5]$, 
	then both edges incident with $v_i$ have at least $5$ available colors. 
	Now, we recolor the edge $uv_1$ with a color $j \in A(v_1v_2) \cap A(v_1v_5) = [1,4]$. 
	(By \cref{lem:3vert-recolor}, there are at least $2$ possibilities for the choice of $j$.) 
	Recall that the $2$-connected graph $G$ contains neither $3$-faces nor separating cycles of length at most $5$. 
	Thus, the edges $v_2v_3$, $v_3v_4$, and $v_4v_5$ are not within facial-distance $3$ from the edge $uv_1$,
	and they retain $[1,4]$ as the set of available colors. 
	Furthermore, $\sigma(uv_1) \notin [1,4]$ replaces $j$ in the set of available colors 
	for the edges $v_1v_2$ and $v_1v_5$. 
	So, the coloring of $G'$ can be extended to $G$ using \cref{thm:Hall}, a contradiction.
\end{proof}

So, there are no $3^-$-vertices incident with $5$-faces.
On the other hand, there might be $2$-vertices incident with $6$-faces.
%Lemma 11
\begin{lemma}
	\label{lem:6face2thread}
	Both neighbors of a $2$-vertex incident with a $6$-face in $G$ are $4^+$-vertices.
\end{lemma}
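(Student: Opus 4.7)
The strategy is to contract the 6-face $\alpha$ and extend the resulting partial coloring to $G$ via Hall's theorem, invoking Lemma~\ref{lem:3vert-recolor} to rebalance available color lists when needed.

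Suppose for contradiction that a 6-face $\alpha = v_1 v_2 v_3 v_4 v_5 v_6$ of $G$ has a 2-vertex $v_1$ with a neighbor on $\alpha$ of degree at most $3$; WLOG this neighbor is $v_2$, and by Corollary~\ref{lem:min-deg-2} and Lemma~\ref{lem:3thread} we have $d(v_2) \in \{2,3\}$. Let $G' = G/\alpha$; by minimality, $G'$ admits a 3-FEC $\sigma'$ with at most $10$ colors, inducing a partial 3-FEC of $G$ in which only the six edges $e_i = v_i v_{i+1}$ of $\alpha$ are uncolored. Since every pair of these edges lies within facial-distance $3$ on the 6-face, they must receive six distinct colors in any extension. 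For each $e_i$, $N_3(e_i) \subseteq E(\alpha) \cup E(\alpha_i)$, where $\alpha_i$ is the other face through $e_i$, and at most six edges of $\alpha_i$ lie within facial-distance $3$ of $e_i$ on $\alpha_i$, yielding the baseline bound $|A(e_i)| \ge 4$. Because $d(v_1) = 2$, the face $\beta := \alpha_{e_1}$ contains both $e_1$ and $e_6$, so $\{e_1, e_6\} \subseteq \alpha \cap \beta$ improves this to $|A(e_1)|, |A(e_6)| \ge 5$.

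In Case 1, $d(v_2) = 2$, so the path $v_6 v_1 v_2 v_3$ lies on both $\alpha$ and $\beta$, giving $\{e_6, e_1, e_2\} \subseteq \alpha \cap \beta$ and upgrading the counts to $|A(e_1)|, |A(e_2)|, |A(e_6)| \ge 6$. Any subset $S$ of the six uncolored edges with $|S| \ge 5$ necessarily contains one of $e_1, e_2, e_6$, so $|A(S)| \ge 6$; the condition for smaller subsets follows from $|A(e_i)| \ge 4$. Hall's theorem (Theorem~\ref{thm:Hall}) thus extends $\sigma'$ to a 3-FEC of $G$, contradicting minimality.

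In Case 2, $d(v_2) = 3$; let $x$ denote the third neighbor of $v_2$. If Hall's theorem still applies we conclude as in Case 1. Otherwise, the bounds force the Hall-violating subset to be all six edges and $|\bigcup_i A(e_i)| = 5$; denote this set by $C_0$. Then $A(e_1) = A(e_6) = C_0$ and $|A(e_1) \cap A(e_2)| \ge 4$, so Lemma~\ref{lem:3vert-recolor} (applied at the 3-vertex $v_2$) provides at least two colors in $A(e_1) \cap A(e_2) \subseteq C_0$ that can legally recolor the edge $v_2 x$. After such a recoloring with some $c^*$, the previous color of $v_2 x$---which necessarily lay outside $C_0$, since it was forbidden for $e_1$ and $A(e_1) = C_0$---becomes available for $e_1$, forcing $\bigcup_i A(e_i)$ to contain a color outside $C_0$; choosing $c^*$ to avoid a single bad value (corresponding to a color missed by each of $A(e_3), A(e_4), A(e_5)$, if such a common missing color exists) ensures that $c^*$ remains in some $A(e_j)$ with $j \in \{3,4,5\}$, so $|\bigcup_i A(e_i)| \ge 6$ and Hall's theorem yields the desired extension.

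The main obstacle is Case 2, where the direct Hall argument falls short by a single color and the $3$-vertex $v_2$ must be exploited through Lemma~\ref{lem:3vert-recolor}. The delicate step is verifying that the two admissible choices of $c^*$ granted by that lemma always suffice to rule out every pathological alignment of the miss-patterns of $A(e_3), A(e_4), A(e_5)$; this relies on each such list missing at most one color from $C_0$, so that the single ``bad'' value is avoidable among the two choices for $c^*$.
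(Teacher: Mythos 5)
Your overall strategy is the one the paper uses (contract $\alpha$, extend by \cref{thm:Hall}, and in the deficient case recolor the third edge at the $3$-vertex via \cref{lem:3vert-recolor}), and your Case 1 is correct. The gap is in the last step of Case 2. Your selection rule for $c^*$ only guarantees that $c^*\in A(e_j)$ for some $j\in\{3,4,5\}$ \emph{before} the recoloring; it does not guarantee that $c^*$ is still available for that edge \emph{after} $v_2x$ is recolored with $c^*$. Recoloring $v_2x$ deletes $c^*$ from the list of every uncolored edge within facial-distance $3$ of $v_2x$, and $e_3$ can be such an edge: nothing proved so far forbids $d(v_3)=2$ (that is precisely the kind of configuration this half of the lemma is eliminating), and if $d(v_3)=2$ the face containing $v_2x$ and $e_2$ at $v_2$ continues through the $2$-vertex $v_3$ to $e_3$, so $e_3$ is at facial-distance $2$ from $v_2x$. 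Concretely, if $A(e_3)=C_0$, $A(e_4)=A(e_5)=C_0\setminus\{a\}$, and the two admissible colors from \cref{lem:3vert-recolor} are $a$ and $b$, then your rule (avoid only a color missed by \emph{all} of $A(e_3),A(e_4),A(e_5)$) permits $c^*=a$; after the recoloring, $a$ vanishes from $A(e_1),A(e_2),A(e_3),A(e_6)$ and was never in $A(e_4)\cup A(e_5)$, so the union of the six lists is again of size $5$ and Hall still fails. So the ``single bad value'' analysis you flag as the delicate point addresses the wrong condition: the real issue is survival of $c^*$ on an edge that is not $3$-facially adjacent to $v_2x$.

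The argument is repairable along the lines of the paper: insist that the witness edge be $e_4$ or $e_5$. Neither of these can be within facial-distance $3$ of $v_2x$: reaching $e_4$ along the face through $x,v_2,v_3$ would need $d(v_3)=d(v_4)=2$, and reaching $e_5$ along the face through $x,v_2,v_1,v_6$ would need $d(v_6)=2$; both give a $2$-thread on $\alpha$, excluded by your Case 1, and any other common face is ruled out by \cref{lem:face-length} and \cref{lem:separating-cycle}. Since each of $A(e_4),A(e_5)$ misses at most one color of $C_0$, the set $A(e_4)\cup A(e_5)$ misses at most one, so one of the two admissible colors from \cref{lem:3vert-recolor} lies in it and survives the recoloring, giving the union of size $6$ you want. (A smaller omission: you assert that the old color of $v_2x$ becomes available for $e_1$; this does hold, but because $|A(e_1)|=5$ forces the at most five colored edges $3$-facially adjacent to $e_1$ to carry five distinct colors, so $v_2x$ is the only one carrying that color.)
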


\begin{proof}
	We divide the proof in two parts. First, we show that a $2$-vertex does not have a $2$-neighbor,
	i.e., there is no $2$-thread on a $6$-face.
	Suppose the contrary and let $\alpha$ be a $6$-face with an incident $2$-thread $(u,v)$ and let $G' = G/\alpha$.
	Then, $G'$ admits a \fec $\sigma$ with at most $10$ colors, which induces a \fec of $G$
	with only the edges of $\alpha$ being non-colored.
	The three edges incident with the vertices $u$ and $v$ have at least $6$ available colors, 
	and the other edges of $\alpha$ have at least $4$ available colors.
	It is easy to see that we can extend $\sigma$ to all edges of $G$ by applying \cref{thm:Hall}, a contradiction.
	
	Second, suppose that a $2$-vertex $v$ of a $6$-face $\alpha$ is adjacent to a $3$-vertex $u$.
	Let $u_1$ be the neighbor of $u$, distinct from $v$, which is incident with $\alpha$, 
	and $u_2$ the third neighbor of $u$.
	Again, consider $G' = G/\alpha$ and a \fec $\sigma$ of $G'$ using at most $10$ colors.
	In the coloring of $G$ induced by $\sigma$, only the edges of $\alpha$ are non-colored.
	Every non-colored edge has at least $4$ available colors, 
	while the two edges incident with $v$ have at least $5$ available colors.
	Hence, if the set $A(\alpha)$ contains at least $6$ colors, then we can apply \cref{thm:Hall} and we are done.
	
	Thus, we may assume that $A(\alpha)$ contains precisely $5$ colors, say $[1,5]$.
	Notice that the sets of available colors on the edges of $\alpha$ not incident with $v$ are not necessarily the same.
	However, since $|A(uv)| = 5$, the intersection of $A(e) \cap A(uv)$, for any $e \in E(\alpha)$, contains at least $4$ colors.	
	Therefore, from among at least $2$ colors that can be used to recolor the edge $uu_2$ by \cref{lem:3vert-recolor}, 
	at least one, say $j$, appears in $A(e)$ for some $e \in E(\alpha) \setminus \set{uu_1,uv}$. 
	Then, after recoloring the edge $uu_2$ with the color $j$, 
	the new set of available colors for $E(\alpha)$ is of size $6$, 
	and we can apply \cref{thm:Hall} to find a \fec of $G$ with at most $10$ colors, a contradiction.
\end{proof}

From Lemmas~\ref{lem:face-length}, \ref{lem:5face4vertex}, and~\ref{lem:6face2thread} we obtain the following Corollary.
\begin{corollary}
	\label{cor:2thread7face}
	No $2$-thread in $G$ is incident with a $6^-$-face.
\end{corollary}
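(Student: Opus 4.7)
The plan is to combine the three preceding lemmas in a straightforward case analysis on the length of the face. By \cref{lem:face-length}, every face of $G$ has length at least $5$, so a $6^-$-face is either a $5$-face or a $6$-face, and it suffices to rule out $2$-threads on each of these two face lengths separately.

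First I would handle the $5$-face case. By \cref{lem:5face4vertex}, every vertex incident with a $5$-face of $G$ is a $4^+$-vertex. In particular, no $2$-vertex is incident with a $5$-face, and therefore no $2$-thread (whose vertices are by definition $2$-vertices) can be incident with a $5$-face.

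Next I would handle the $6$-face case. Suppose for contradiction that some $2$-thread $(u,v)$ is incident with a $6$-face $\alpha$ of $G$. Then $u$ is a $2$-vertex incident with $\alpha$, and its neighbor $v$ in the thread is also a $2$-vertex. But \cref{lem:6face2thread} asserts that both neighbors of any $2$-vertex incident with a $6$-face are $4^+$-vertices, contradicting $d(v)=2$. Hence no $2$-thread is incident with a $6$-face either.

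The corollary follows immediately by combining the two cases, and there is no real obstacle: the whole content of the argument lies in the three lemmas being invoked, and the corollary is simply a convenient packaging of them. The only mildly subtle point is remembering that \cref{lem:face-length} restricts us to faces of length $5$ or $6$ among $6^-$-faces, so that no further cases (e.g., $3$-faces or $4$-faces) need to be addressed.
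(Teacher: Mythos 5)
Your proof is correct and matches the paper's intent exactly: the corollary is stated there as an immediate consequence of \cref{lem:face-length}, \cref{lem:5face4vertex}, and \cref{lem:6face2thread}, which is precisely the case analysis you carry out.
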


%Lemma 12
\begin{lemma}
	\label{lem:6face2vert}
	A $2$-vertex in $G$ is incident with at least one $7^+$-face.
\end{lemma}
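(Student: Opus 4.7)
The plan is to argue by contradiction. Suppose $v$ is a $2$-vertex of $G$ with both incident faces $\alpha_1, \alpha_2$ of length at most $6$. By \cref{lem:face-length} every face has length at least $5$, and by \cref{lem:5face4vertex} a $2$-vertex cannot lie on a $5$-face, so both $\alpha_1$ and $\alpha_2$ are $6$-faces. If one of the neighbors of $v$ were a $2$-vertex, then $v$ would lie on a $2$-thread incident with $\alpha_1$, contradicting \cref{cor:2thread7face}; hence both neighbors $u_1, u_2$ of $v$ are $3^+$-vertices, and by \cref{lem:6face2thread} they are in fact $4^+$-vertices.

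The reduction is to contract the edge $vu_1$. Let $G' = G/vu_1$. A parallel edge in $G'$ would require $u_1$ and $v$ to share a second common neighbor, forcing $u_1u_2 \in E(G)$; but then the $3$-cycle $u_1vu_2$ would either bound a $3$-face (contradicting \cref{lem:face-length}) or be a separating cycle of length at most $7$ (contradicting \cref{lem:separating-cycle}). Hence $G'$ is a simple plane graph on $|V(G)|-1$ vertices and, by the minimality of $G$, admits a \fec $\sigma$ with at most $10$ colors. In $G'$ each of $\alpha_1, \alpha_2$ shrinks to a $5$-face on which every pair of edges is within facial distance $2$, while every other face is unchanged; consequently every $3$-facial adjacency of $G$ among edges distinct from $vu_1$ is inherited by $G'$, so $\sigma$ induces a valid partial \fec of $G$ in which only $vu_1$ is uncolored.

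It remains to color $vu_1$. Since $vu_1$ lies on exactly the two faces $\alpha_1$ and $\alpha_2$, its $3$-facial neighborhood is contained in $(E(\alpha_1) \cup E(\alpha_2)) \setminus \set{vu_1}$. On a $6$-face every other edge is within facial distance at most $3$, so each face contributes $5$ edges, and since $vu_2$ is the only edge common to both boundaries, the union has at most $5 + 5 - 1 = 9$ elements. Therefore at most nine of the ten colors are forbidden for $vu_1$, leaving at least one available color and extending $\sigma$ to a \fec of $G$ with $10$ colors, contradicting the minimality of $G$. The only genuine technical point is verifying the simplicity of $G' = G/vu_1$, which reduces entirely to excluding the triangle $u_1vu_2$; the remainder is a routine count of the $3$-facial neighborhood of a single edge lying on two $6$-faces.
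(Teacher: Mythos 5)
Your proof is correct and takes essentially the same route as the paper: the paper deletes the $2$-vertex $v$ and extends by noting each of the two uncolored edges has at least $2$ available colors, while you contract $vu_1$ and extend the single uncolored edge with at least $1$ available color, both resting on the same count of the $3$-facial neighborhood inside the two incident $6$-faces. Your digression on simplicity of $G/vu_1$ is unnecessary (the theorem and the minimal counterexample are taken over pseudographs, and the paper contracts edges freely, e.g.\ in \cref{lem:3thread} and \cref{lem:face-length}), but it is harmless.
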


\begin{proof}
	We again proceed by contradiction.
	Since $2$-vertices are not incident with $5^-$-faces by Lemmas~\ref{lem:face-length} and~\ref{lem:5face4vertex},
	suppose that there is a $2$-vertex $v$ in $G$ incident with two $6$-faces.
	Let $G' = G-v$. 
	By the minimality, there is a \fec $\sigma$ of $G'$ using at most $10$ colors.	
	Consider now the coloring of $G$ induced by $\sigma$, in which only the two edges incident with $v$
	remain non-colored.
	Each of the two edges has at least $2$ available colors, so we can color them,
	and thus extend $\sigma$ to all edges of $G$, a contradiction.
\end{proof}

Now, we focus on $7$-faces incident with $2$-vertices.
We begin with $7$-faces incident with a $2$-thread.

%Lemma 13
\begin{lemma}
	\label{lem:7face2th3nei}
	Every $2$-thread incident with a $7$-face in $G$ has at least one $4^+$-neighbor.
\end{lemma}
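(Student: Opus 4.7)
Suppose for contradiction that both neighbors of the $2$-thread $(u,v)$ on the $7$-face $\alpha$ are $3^-$-vertices. By \cref{lem:3thread}, neither can be a $2$-vertex, so both are $3$-vertices. Label the cyclic vertex order of $\alpha$ as $u_1, u, v, v_1, v_2, v_3, v_4$, let $u_1'$ and $v_1'$ denote the third neighbors of $u_1$ and $v_1$ (those not on $\alpha$), and let $\delta$ be the face other than $\alpha$ that is incident with the thread edges. The plan is to split the argument according to $|\delta|$.

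The key observation is that $u_1 u_1'$ and $v_1 v_1'$ both lie on $\delta$ and are separated by the thread edges, so along the thread side of $\delta$ they are at distance exactly $4$. For $|\delta| \ge 8$ the other side of $\delta$ has length at least $4$, and coincidences producing additional short common faces are ruled out by \cref{lem:separating-cycle} and \cref{lem:face-length}, so the facial-distance between these two edges in $G$ is exactly $4$. I would then identify $u_1 u_1'$ and $v_1 v_1'$ (matching $u_1 \sim v_1$ and $u_1' \sim v_1'$) to form a planar graph $G'$ via the dumbbell construction used in the proof of \cref{lem:8face}. By minimality, $G'$ admits a \fec $\sigma'$ with at most $10$ colors; lifting to $G$ by assigning $u_1 u_1'$ and $v_1 v_1'$ the color of the identified edge yields a \fec of $G$, since identification only shortens facial-distances (so every conflict in $G$ is already a conflict in $G'$) and the identified pair is not $3$-facially adjacent in $G$. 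This contradicts the minimality of $G$.

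For $|\delta| \in \{5,6\}$, the cycle bounding the outer region of $\alpha \cup \delta$ has length $|\delta| + 1 \le 7$, and by \cref{lem:separating-cycle} it is non-separating; since its interior contains $u$ and $v$, its exterior must be empty. Hence $V(G) = V(\alpha) \cup V(\delta)$ has at most $9$ vertices and $G$ has at most $10$ edges, so assigning a distinct color from $[1,10]$ to every edge yields a trivial \fec of $G$, again a contradiction.

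The main obstacle is the case $|\delta| = 7$, where the bounding $8$-cycle may be separating. If it is non-separating, then $G$ has exactly $10$ vertices and $11$ edges forming three faces (two $7$-faces and one outer $8$-face); I would locate a pair of edges on the outer $8$-face at facial-distance $4$ and sharing no other face, give them a common color, and assign distinct colors from $[1,10]$ to the remaining $9$ edges. If the $8$-cycle is separating, I would follow the scheme of \cref{lem:2-con}: split $G$ along the cycle, apply minimality to each side, and permute one of the two colorings so that the edge colors along the separating cycle coincide, then glue. A secondary technical hurdle is to verify the lift in the $|\delta| \ge 8$ case when the identification creates repeated vertices on the boundary of $\alpha$ in $G'$, which requires checking that facial-distances on the pinched $\alpha$ do not introduce spurious conflicts.
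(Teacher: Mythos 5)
Your main case ($|\delta|\ge 8$) rests on the claim that, after identifying $u_1u_1'$ with $v_1v_1'$, a coloring of $G'$ can simply be lifted back to $G$ because ``identification only shortens facial-distances''. That principle is true for contracting edges (as used in \cref{lem:3thread}), but false for identifying two edges of a common face: the identification splits the facial walk of $\delta$ into two shorter walks, so pairs of edges of $\delta$ lying on opposite sides of the identified pair lose their common face. Concretely, $vv_1$ and the edge of $\delta$ following $v_1v_1'$ are at facial-distance $2$ in $G$ (along $\delta$), but in $G'$ the edge $vv_1$ lies only on $\alpha$ and on the short piece of $\delta$ bounded by the thread, while the other edge lies on the long piece; they share no face of $G'$, hence $\sigma'$ may assign them the same color and the lifted coloring is not a \fec of $G$. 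This is precisely why the paper, whenever it identifies two edges of a face (\cref{lem:8face}, \cref{lem:9face2vertex}), uncolors all remaining edges of the pinched face and re-colors them via \cref{thm:Hall} or \cref{thm:nullstellensatz}; here the pinched face is an arbitrary $8^+$-face $\delta$ about whose vertex degrees nothing is known, so no such recoloring argument is at hand. Your closing worry about ``spurious conflicts'' on the pinched $\alpha$ points in the wrong direction: extra conflicts in $G'$ would be harmless, the danger is conflicts of $G$ that vanish in $G'$. A telling symptom is that your argument never uses the hypothesis $d(u_1)=d(v_1)=3$: if it worked, it would show that any $2$-thread on a $7$-face is reducible outright, which the paper explicitly does not claim (cf.\ \cref{lem:7face2thread2vert} and the remark that a $7$-face may even carry two $2$-threads).

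The case $|\delta|=7$ is also not closed (incidentally, \cref{cor:2thread7face} already gives $\ell(\delta)\ge 7$, so your cases $|\delta|\in\{5,6\}$ are vacuous). For a separating $8$-cycle, the permute-and-glue scheme of \cref{lem:2-con} and \cref{lem:separating-cycle} needs all cycle edges to be pairwise $3$-facially adjacent, hence rainbow, in both pieces; this holds only for cycles of length at most $7$. On an $8$-cycle, antipodal edges are at facial-distance $4$, so one side may repeat a color while the other does not, and no permutation of colors aligns the two colorings---this is exactly the obstruction that forces the paper to prove \cref{lem:8face} (and later \cref{lem:7face2thread7fac}) by a same-color identification trick rather than by excluding such cycles. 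For comparison, the paper's proof of the present lemma is quite different and uses the two $3$-vertices essentially: it contracts $\alpha$, notes that every edge of $\alpha$ retains at least $4$ (the three thread edges at least $6$) available colors, applies \cref{thm:Hall} when $|A(\alpha)|\ge 7$, and otherwise recolors the pendant edges $u_1u_1'$ and $v_1v_1'$ at the two $3$-vertices (the very edges you identify) via \cref{lem:3vert-recolor} to enlarge $A(\alpha)$ to seven colors and then finishes with \cref{thm:Hall}.
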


\begin{proof}
	Suppose the contrary and let $\alpha$ be a $7$-face incident with a $2$-thread $(v_2,v_3)$,
	where the other neighbors of $v_2$ and $v_3$ ($v_1$ and $v_4$, respectively) are both $3$-vertices.
	We label the vertices as depicted in Figure~\ref{fig:7face2th3nei}.
	\begin{figure}[htp!]
		$$
			\includegraphics[scale=\figscale]{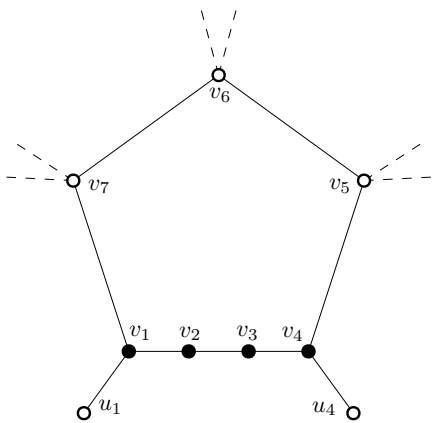}
		$$
		\caption{A reducible $7$-face incident with a $2$-thread with two $3$-neighbors.}
		\label{fig:7face2th3nei}
	\end{figure}
	
	Let $G' = G/\alpha$ and let $\sigma$ be a \fec of $G'$.
	In the partial coloring of $G$ induced by $\sigma$, only the edges of $\alpha$ are non-colored,
	and the number of available colors is at least $4$ for arbitrary non-colored edge, 
	while it is at least $6$ for the three edges incident with $v_2$ and/or $v_3$.
	It is easy to verify that if the set $A(\alpha)$ of available colors contains at least $7$ colors, 
	then we can complete the coloring by \cref{thm:Hall}.
	Thus we may assume that $|A(\alpha)| = 6$, say $A(\alpha) = [1,6]$.
	Additionally, we may assume that $\sigma(u_1v_1) = 7$.
	
	So, $|A(v_1v_2) \cap A(v_1v_7)| \ge 4$, and we can recolor $u_1v_1$ with a color 
	from $I = A(v_1v_2) \cap A(v_1v_7) \cap A(u_1v_1)$, since $|I| \ge 2$ by \cref{lem:3vert-recolor}.
	If there is an edge $e$ of $\alpha$ which is not $3$-facially adjacent to $u_1v_1$ 
	and $A(e)$ contains a color from $I$, say $1$, then we can recolor $u_1v_1$ with $1$.
	The new set of available colors for $E(\alpha)$ is then $[1,7]$, 
	and hence we can apply \cref{thm:Hall} to find a contradictory \fec of $G$.
	
	Note that if $|I| \ge 3$, then we can always find a suitable edge $e$.
	Therefore, we may assume $|I| = 2$, say $I = \set{1,2}$, 
	hence $d(v_7) \ge 3$, and, by symmetry, $d(v_5) \ge 3$. 
	Therefore, by Lemmas~\ref{lem:face-length} and~\ref{lem:3thread}, 
	there is no edge in the set $\set{v_4v_5,v_5v_6,v_6v_7}$ that is $3$-facially adjacent to $u_1v_1$; 
	thus, we have, say, $A(v_1v_7) = [1,4]$ and $A(v_4v_5) = A(v_5v_6) = A(v_6v_7) = [3,6]$.
	Analogously as above, at least two colors from $A(v_3v_4)\cap A(v_4v_5)$ 
	can be used to recolor $u_4v_4$ by \cref{lem:3vert-recolor}. 
	Since the color of $A(\alpha)$ involved in the recoloring is still available for $v_6v_7$, 
	the new set of available colors for $E(\alpha)$, 
	namely $[1,6] \cup \set{\sigma(u_4v_4)}$, is of size $7$, a contradiction. 
\end{proof}

%Lemma 14
\begin{lemma}
	\label{lem:7face2thread7fac}
	A $2$-thread in $G$ is incident with at most one $7$-face.
\end{lemma}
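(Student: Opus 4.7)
The plan is a proof by contradiction following the template of \cref{lem:7face2th3nei}. Suppose $(u,v)$ is a $2$-thread incident with two $7$-faces $\alpha_1$ and $\alpha_2$, and label the boundaries as $\alpha_1 = (u_1, u, v, v_1, a_1, a_2, a_3)$ and $\alpha_2 = (u_1, u, v, v_1, b_3, b_2, b_1)$; by \cref{lem:3thread} both $u_1$ and $v_1$ are $3^+$-vertices, and by \cref{lem:7face2th3nei} at least one of $u_1, v_1$ is a $4^+$-vertex. I would contract $\alpha_1$ to obtain $G' = G/\alpha_1$ and invoke the minimality of $G$ to get a \fec $\sigma$ of $G'$ with at most $10$ colors; this induces a partial \fec of $G$ with only the $7$ edges of $\alpha_1$ uncolored. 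The key observation is that after the contraction, $\alpha_2$ collapses to a $4$-face in $G'$, so the four edges $v_1 b_3, b_3 b_2, b_2 b_1, b_1 u_1$ receive four distinct colors in $\sigma$; denote this $4$-element color set by $T$.

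Counting available colors, the three $2$-thread edges have all their $3$-facial neighbors inside $E(\alpha_1) \cup E(\alpha_2)$, and the only obstructions are the four edges carrying the colors of $T$; hence $A(u_1u) = A(uv) = A(vv_1) = [1,10] \setminus T$, each of size $6$. Each of the four outer edges $v_1 a_1, a_1 a_2, a_2 a_3, a_3 u_1$ of $\alpha_1$ has at most $6$ colored $3$-facial neighbors (from its other face), giving at least $4$ available colors. All seven edges of $\alpha_1$ are pairwise within facial-distance $3$ and must receive distinct colors. If $|A(\alpha_1)| \geq 7$, then \cref{thm:Hall} extends $\sigma$: for a subset $S$ of the seven uncolored edges, $|A(S)| \geq 4 \geq |S|$ when $|S| \leq 4$, and when $|S| \geq 5$ the set $S$ must contain a $2$-thread edge, giving $|A(S)| \geq 6 \geq |S|$ for $|S| \leq 6$ and $|A(S)| = |A(\alpha_1)| \geq 7$ for $|S| = 7$. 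So I may assume $|A(\alpha_1)| = 6$, forcing $A(\alpha_1) = [1,10] \setminus T$; writing WLOG $A(\alpha_1) = [1,6]$ and $T = \{7, 8, 9, 10\}$, each outer edge of $\alpha_1$ has its available set as a size-at-least-$4$ subset of $[1,6]$.

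The final step is a recoloring trick in the spirit of \cref{lem:7face2th3nei}. I would pick an edge $e \in \{v_1 b_3, b_3 b_2, b_2 b_1, b_1 u_1\}$ and try to recolor it from its color in $T$ to some $j \in [1,6]$. The recoloring is admissible provided $j$ avoids the colors of the $3$-facial neighbors of $e$ on its other face, and it is useful provided $j$ also lies in the union of the available sets of the four outer edges of $\alpha_1$; in that case the old $T$-color of $e$ becomes available for the $2$-thread edges while $j$ remains available on some outer edge, so $|A(\alpha_1)|$ grows to $7$ and \cref{thm:Hall} closes the argument. The main obstacle I anticipate is showing that a suitable pair $(e, j)$ always exists: in degenerate configurations one has to exploit the $4^+$-vertex guaranteed by \cref{lem:7face2th3nei}, the symmetry between $\alpha_1$ and $\alpha_2$ (one may equally contract $\alpha_2$), and \cref{lem:3vert-recolor} applied to any $3$-vertex among the $a_i, b_i$. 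Should all direct recolorings fail, the structural restrictions on the face opposite to each outer edge, bounded via \cref{lem:face-length} and \cref{lem:separating-cycle}, would be used to force a contradiction.
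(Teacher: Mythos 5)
Your setup (contracting $\alpha_1$, the observation that $\alpha_2$ becomes a $4$-face so its four outer edges carry four distinct colors $T$, the count $A(u_1u)=A(uv)=A(vv_1)=[1,10]\setminus T$, and the Hall argument when $|A(\alpha_1)|\ge 7$) is sound, but the whole difficulty of your route is concentrated in the remaining case $|A(\alpha_1)|=6$, and there the proof stops at an intention rather than an argument. You must exhibit an edge $e\in\set{v_1b_3,b_3b_2,b_2b_1,b_1u_1}$ and a color $j\in[1,6]$ such that (i) $j$ avoids the colors of all colored $3$-facial neighbours of $e$ on its other face --- but $e$ can have six such neighbours, so every color of $[1,6]$ may be blocked for every one of the four candidate edges --- and (ii) after the recoloring, $j$ is still available for some uncolored edge of $\alpha_1$ that is \emph{not} $3$-facially adjacent to $e$; this second point is not guaranteed by your ``useful'' condition, since for instance $b_1u_1$ and $u_1a_3$ may be consecutive on a third face at $u_1$, so recoloring $e$ can delete $j$ from exactly the outer lists you were counting on. You explicitly concede that a suitable pair $(e,j)$ may fail to exist and that ``structural restrictions \dots would be used to force a contradiction,'' but no such argument is supplied. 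In the analogous situations in this paper (e.g.\ \cref{lem:7face2th3nei}, \cref{lem:7face6face}) this is precisely where the real, case-by-case work happens, so the decisive case of your proof is missing, not merely routine.

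For comparison, the paper sidesteps this analysis entirely: delete the two thread vertices, so that $\alpha_1$ and $\alpha_2$ merge into an $8$-face of $G-\set{u,v}$, and reuse the identification trick from the proof of \cref{lem:8face} to obtain a $10$-coloring of the smaller graph in which two antipodal edges of that $8$-face (one from each original $7$-face) receive the same color. Then each of the three thread edges is in conflict with the eight colored boundary edges, which use at most seven colors, so each has at least three available colors, and three mutually conflicting edges with lists of size three are colored greedily. If you wish to keep your contraction approach, you would need to carry out a recoloring case analysis in the style of \cref{lem:7face2th3nei} (exploiting the $4^+$-neighbour it guarantees and \cref{lem:3vert-recolor}); as written, the argument has a genuine gap.
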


\begin{proof}
	Suppose the contrary and let $(v_1,v_2)$ be a $2$-thread incident with two $7$-faces $\alpha$ and $\alpha'$.
	Let $G' = G \setminus \set{v_1,v_2}$.
	By \cref{lem:8face}, there is a \fec $\sigma$ of $G'$ using at most $10$ colors such that two edges 
	of the face in $G'$ corresponding to the faces $\alpha$ and $\alpha'$ in $G$ have the same color assigned.
	This means that in the coloring of $G$ induced by $\sigma$, each of the three non-colored edges (the edges incident with the $2$-thread)
	have at least $3$ available colors, and therefore we can extend $\sigma$ to all edges of $G$, a contradiction.
\end{proof}

%Lemma 15
\begin{lemma}
	\label{lem:7face2thread2vert}
	Let $\alpha$ be a $7$-face in $G$ with a $2$-thread $(v_2,v_3)$ and at least one $2$-vertex $v$ distinct from $v_2$ and $v_3$.
	Then, every $2$-vertex incident with $\alpha$ has a $2$-neighbor and a $4^+$-neighbor or two $4^+$-neighbors.
\end{lemma}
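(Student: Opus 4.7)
The plan is to proceed by contradiction: assume some $2$-vertex $w$ of $\alpha$ has a $3$-neighbor $u$. Both neighbors of $w$ lie on $\alpha$, and by \cref{lem:3thread} the other neighbor of $w$ cannot be a $2$-vertex when $u$ is also a $2$-vertex, so $u$ must be a genuine $3$-vertex. Label the boundary of $\alpha$ cyclically as $v_1,\ldots,v_7$ with $(v_2,v_3)$ the given $2$-thread. \cref{lem:3thread} forces $v_1,v_4$ to be $3^+$-vertices, and the extra $2$-vertex $v$ lies in $\{v_5,v_6,v_7\}$; by reflection across the $2$-thread we may assume $v\in\{v_5,v_6\}$.

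The core move mirrors the proofs of \cref{lem:7face2th3nei,lem:7face2thread7fac}. Form $G'=G/\alpha$ and, by the minimality of $G$, take a $3$-FEC $\sigma$ of $G'$ with at most $10$ colors, which lifts to a partial $3$-FEC of $G$ whose only uncolored edges are $E(\alpha)=\{e_1,\ldots,e_7\}$. A standard count gives $|A(e_i)|\ge 4$ for every $i$; because each $2$-vertex of $\alpha$ has its second incident face of length $\ge 7$ by \cref{lem:6face2vert}, the same refined count used in the proof of \cref{lem:7face2th3nei} yields $|A(e_i)|\ge 6$ for the three edges incident with the $2$-thread and $|A(e_i)|\ge 5$ for the two edges incident with $v$. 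If $|A(\alpha)|\ge 7$, then \cref{thm:Hall} completes $\sigma$ to a $3$-FEC of $G$, a contradiction. We may therefore assume $|A(\alpha)|\le 6$, and after relabeling, $A(\alpha)=[1,6]$.

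To break this case, I would apply \cref{lem:3vert-recolor} at the $3$-vertex $u$. Denote by $e,e'$ the two $\alpha$-edges at $u$ and by $uu'$ the third edge at $u$. Since $\sigma(uu')\in[7,10]\setminus A(\alpha)$, by \cref{lem:3vert-recolor} there are at least two colors $c\in A(e)\cap A(e')$ which, when assigned to $uu'$, keep the partial coloring a $3$-FEC. Within $E(\alpha)$ only $e$ and $e'$ are $3$-facially adjacent to $uu'$, so after such a swap the lists on edges of $E(\alpha)\setminus\{e,e'\}$ are unchanged while $\sigma(uu')$ becomes available on $e$ and $e'$. Choosing $c$ so that $c$ remains in $A(f)$ for some $f\in E(\alpha)\setminus\{e,e'\}$, which is possible since the seven lists crammed into $[1,6]$ force sufficient repetition, enlarges $|A(\alpha)|$ to $7$, and \cref{thm:Hall} finishes.

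The main obstacle is the case analysis that pins down the role of $u$ together with verifying that the color $c$ above can indeed be selected without collapsing another list. When $w\in\{v_2,v_3\}$, the $3$-neighbor $u$ lies in $\{v_1,v_4\}$; the other of $\{v_1,v_4\}$ is a $4^+$-vertex by \cref{lem:7face2th3nei}, giving enough slack in the available lists to carry out the swap. When $w=v$ the neighbor $u$ lies on the arc between the $2$-thread and $v$, and a short sub-case split on $v\in\{v_5,v_6\}$ and on whether $v$ itself lies in a $2$-thread on $\alpha$ (subject to \cref{lem:3thread}) covers the remaining possibilities; in each configuration the edge $f$ needed to absorb $c$ can be located on the $\alpha$-arc opposite $u$, yielding the desired contradiction.
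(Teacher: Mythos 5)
Your high-level strategy is the same as the paper's (contract $\alpha$, finish by \cref{thm:Hall} when $|A(\alpha)|\ge 7$, otherwise recolor the third edge at the $3$-vertex via \cref{lem:3vert-recolor} to enlarge the union of lists to seven colors), but the execution has genuine gaps exactly at the decisive step. The claim that within $E(\alpha)$ only $e$ and $e'$ are $3$-facially adjacent to $uu'$ is false: since the $2$-vertices of $\alpha$ have both their edges lying also on the neighbouring face, $uu'$ reaches further along $\alpha$ through that face. For instance, if $u=v_1$, the face containing $uu'$ and $v_1v_2$ consecutively continues through the $2$-vertices $v_2,v_3$, so $uu'$ is within facial-distance $3$ of $v_2v_3$ and $v_3v_4$; and when $u$ is the $3$-neighbor of $v$, the edge $uu'$ is $2$-facially adjacent to the second edge at $v$. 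Consequently ``the lists on $E(\alpha)\setminus\{e,e'\}$ are unchanged'' is wrong, and the existence of an edge $f$ that still has $c$ available \emph{after} the swap is precisely what has to be proved; appealing to ``sufficient repetition'' does not do it. The paper's proof does this work explicitly: it selects an edge $e'$ of $\alpha$ that is provably not $3$-facially adjacent to $u_1v'$ (using \cref{lem:separating-cycle}; $e'=v_1v_2$ when $v'\neq v_1$, and $e'\in\{v_4v_5,v_6v_7\}$ incident with a $2$-vertex when $v'=v_1$) and takes the recoloring color from the triple intersection $A(v'v_1')\cap A(v'v_2')\cap A(e')$, shown to have size at least $3$ so that \cref{lem:3vert-recolor} guarantees a usable choice.

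Two further points. You assert $\sigma(uu')\in[7,10]\setminus A(\alpha)$ with no argument; this is immediate only when $uu'$ is $3$-facially adjacent to one of the three thread edges, whose lists equal $A(\alpha)$ (e.g.\ $u\in\{v_1,v_4\}$). When $u$ is the $3$-neighbor of $v\in\{v_5,v_6,v_7\}$, the $\alpha$-edges seen by $uu'$ have lists of sizes only at least $5$ and $4$, whose union may be a proper subset of $A(\alpha)$, so without an additional argument $\sigma(uu')$ could lie in $A(\alpha)$ and the swap would gain nothing; this point needs the configuration-specific analysis that your proposal defers to ``a short sub-case split'' without carrying it out, and that deferred analysis is where the substance of the lemma lies. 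Also, \cref{lem:3vert-recolor} yields \emph{two} recoloring options only if $|A(e)\cap A(e')|\ge 4$; the general count gives only $|A(e)\cap A(e')|\ge 3$, hence just one option, so the freedom of choice your argument leans on is not always available.
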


\begin{proof}
	Suppose the contrary and let $\alpha$ be a $7$-face with the vertices labeled as in Figure~\ref{fig:7face2th2vert},
	with a $2$-vertex incident with a $3$-vertex.
	We present three possibilities (up to symmetry) for a neighboring $2$-vertex and a $3$-vertex;
	namely, in the case $(a)$, there is a $3$-neighbor of a $2$-thread, and in the cases $(b)$ and $(c)$ 
	a $3$-neighbor of a $2$-vertex $v$, which is not a part of the $2$-thread $(v_2,v_3)$.
	By \cref{lem:3thread}, we may assume that $v \in \set{v_5,v_6,v_7}$.
	\begin{figure}[htbp!]
		\centering
		\begin{subfigure}[b]{.32\textwidth}
			$$
				\includegraphics[scale=\figscale]{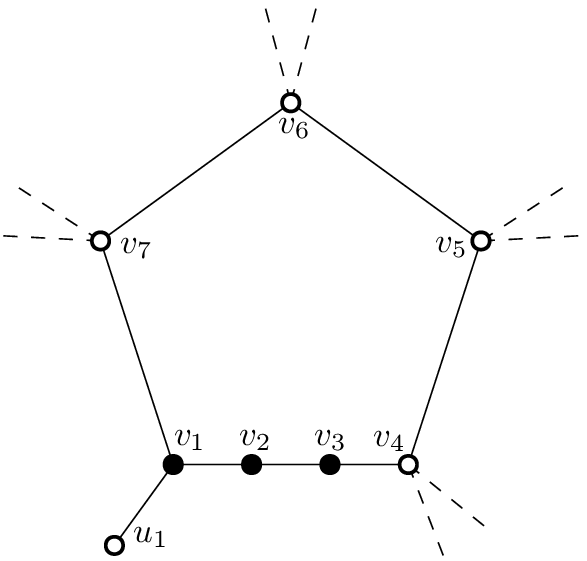}
			$$
			\caption{}
			\label{fig:7face2th2vertA}
		\end{subfigure}	
		\begin{subfigure}[b]{.33\textwidth}
			$$
				\includegraphics[scale=\figscale]{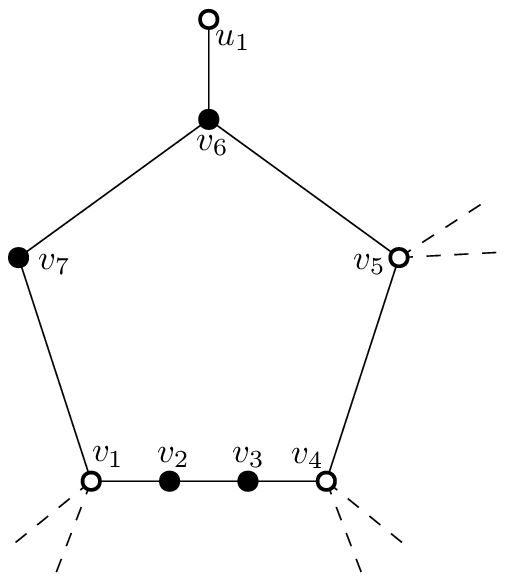}
			$$
			\caption{}
			\label{fig:7face2th2vertB}
		\end{subfigure}	
		\begin{subfigure}[b]{.33\textwidth}
			$$
				\includegraphics[scale=\figscale]{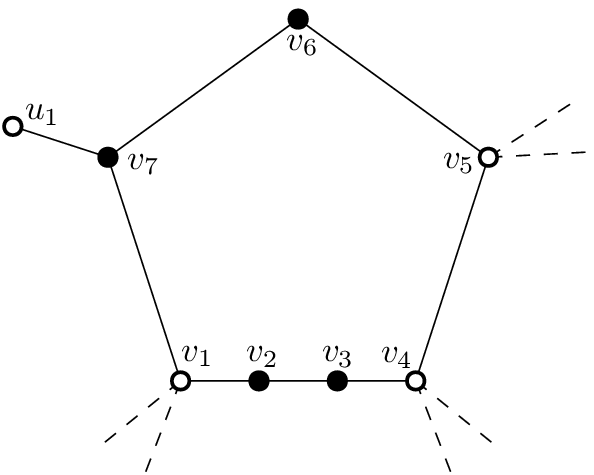}
			$$
			\caption{}
			\label{fig:7face2th2vertC}
		\end{subfigure}			
		\caption{The three possible configurations of a $7$-face incident with a $2$-thread, a $2$-vertex, and a $3$-vertex.}
		\label{fig:7face2th2vert}
	\end{figure}
	
	We prove the lemma for all three cases at once.
	Suppose to the contrary that $\alpha$ (one of the three possible ones) exists in $G$.
	Let $G' = G / \alpha$ and let $\sigma$ be a \fec of $G'$ with at most $10$ colors.
	In the coloring of $G$ induced by $\sigma$, only the edges of $\alpha$ are non-colored.
	Notice that the three edges incident with the $2$-thread $(v_2,v_3)$ have at least $6$ available colors,
	the two edges incident with $v$ have at least $5$, and the remaining two edges have at least $4$.
	From this it follows that if $|A(\alpha)| \ge 7$, then \cref{thm:Hall} applies, 
	and we can color all the edges of $\alpha$ with a different color, hence extending $\sigma$ to $G$, a contradiction.
	
	So, we may assume that $|A(\alpha)| = 6$.
	Denote by $v'$ the $3$-vertex adjacent to $u_1$ (hence, $v' \in \set{v_1,v_6,v_7}$), 
	and let $v_1',v_2'$ be the two neighbors of $v'$ on $\alpha$.
	We claim that there exists an edge $e'$ in $\alpha$, which is not $3$-facially adjacent to $u_1v'$,
	such that $|A(v'v_1') \cap A(v'v_2') \cap A(e')| \ge 3$.
	Note first that by the above argument on the number of available colors, 
	the intersection of available colors of any two edges of $\alpha$, 
	where at least one of them is incident with a $2$-vertex, is at least of size $3$.
	If $v'$ is not $v_1$, then $e' = v_1v_2$ is not $3$-facially adjacent to $u_1v'$ by \cref{lem:separating-cycle},
	and since $|A(v_1v_2)|=6$, the claim follows.
	Otherwise, if $v' = v_1$, we may assume $v_1' = v_2$, 
	and we choose $e' \in \set{v_4v_5,v_6v_7}$ in such a way that $e'$ is incident with a $2$-vertex.
	%Otherwise, we may assume $u_1v' = u_1v_1$ and say $v'v_1' = v_1v_2$. 	
	Similarly as above, since $|A(v_1v_2)|=6$ and $|A(v'v_2') \cap A(e')| \ge 3$, the claim follows.
	Now, by \cref{lem:3vert-recolor}, 
	recoloring $u_1v'$ with a color $c \in A(v'v_1') \cap A(v'v_2') \cap A(e')$ 
	introduces the color $\sigma(u_1v') \notin A(\alpha)$ to the set of available colors for $E(\alpha)$. 
	Since the color $c$ is still available for $e'$, 
	the new \fec of $G'$ can be extended to $G$ by \cref{thm:Hall}, a contradiction.
	%we can recolor $u_1v'$ with a color $c \in A(v'v_1') \cap A(v'v_2') \cap A(e')$, 
	%hence introducing another color to $A(\alpha)$.
	%Since $c \in A(e')$, we have that $|A(\alpha)| = 7$.
	%By \cref{thm:Hall}, we can extend $\sigma$ to $G$, a contradiction.
\end{proof}

Note that there might be a $7$-face in $G$ incident with two $2$-threads.

%Lemma 16
\begin{lemma}
	\label{lem:7face6face}
	If a $7$-face $\alpha$ in $G$ is incident with at least two $2$-vertices but no $2$-thread,
	then every $2$-vertex incident with $\alpha$ has at least one $4^+$-neighbor.
\end{lemma}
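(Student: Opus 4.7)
The plan is to follow the template of \cref{lem:7face2th3nei} and \cref{lem:7face2thread2vert}. Label the vertices of $\alpha$ as $v_1,\ldots,v_7$ cyclically so that $v_1$ is a $2$-vertex whose neighbors $v_2,v_7$ are both $3$-vertices, and let $u_2,u_7$ denote the off-$\alpha$ neighbors of $v_2,v_7$, respectively. Contract $\alpha$ to obtain $G'=G/\alpha$; by minimality, $G'$ admits a \fec $\sigma$ with at most $10$ colors, which I view as a partial \fec of $G$ in which only the edges of $\alpha$ are uncolored. A count through the other face $\beta_e$ of each edge $e\in E(\alpha)$ shows $|A(e)|\ge 4$ in general, and $|A(e)|\ge 5$ whenever $e$ is incident with a $2$-vertex of $\alpha$ (because the second edge at that $2$-vertex lies on both $\alpha$ and $\beta_e$, hence is uncolored).

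If $|A(\alpha)|\ge 7$, I would verify Hall's condition across all subsets $S\subseteq E(\alpha)$: the bound $|A(e)|\ge 4$ handles $|S|\le 4$, and since $\alpha$ has at least four $2$-vertex-incident edges (at least two non-adjacent $2$-vertices, each contributing two edges), every $S$ of size $\ge 5$ contains such an edge, giving $|A(S)|\ge 5$; a brief case analysis using $|A(\alpha)|\ge 7$ settles $|S|\in\{6,7\}$. Then \cref{thm:Hall} extends $\sigma$ to $G$, contradicting minimality.

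Assume therefore that $|A(\alpha)|=6$, say $A(\alpha)=[1,6]$. The edge $v_1v_2$ has exactly five colored $3$-facial neighbors on the other face $\beta$ of $v_1v_2$: the edge $u_2v_2$ at facial-distance $1$, the edge $u_7v_7$ at facial-distance $2$, and three further colored edges at facial-distance $2$ or $3$. Since $A(v_1v_2)\subseteq[1,6]$, each of the four colors of $[7,10]$ must appear among these five neighbors; so if both $\sigma(u_2v_2),\sigma(u_7v_7)\in[1,6]$, only three neighbors can realize four distinct colors from $[7,10]$, a contradiction. Without loss of generality, $\sigma(u_2v_2)\in[7,10]$. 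I would then invoke \cref{lem:3vert-recolor} at the $3$-vertex $v_2$: since $|A(v_1v_2)\cap A(v_2v_3)|\ge 5+4-6=3$, at least one color in this intersection is usable to recolor $u_2v_2$. Using that the second $2$-vertex on $\alpha$ provides an edge $e'\in E(\alpha)\setminus\{v_1v_2,v_2v_3,v_1v_7\}$ with $|A(e')|\ge 5$, I would choose a usable color $c\in A(v_1v_2)\cap A(v_2v_3)\cap A(e')$. Recoloring $u_2v_2\mapsto c$ preserves $c$'s availability at $e'$ while introducing $\sigma(u_2v_2)\in[7,10]$ to $A(v_1v_2)$, so the new $|A(\alpha)|\ge 7$ and \cref{thm:Hall} completes the coloring.

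The main obstacle is the tight sub-case $|A(v_1v_2)|=6$, in which the four colors of $[7,10]$ must repeat among the five colored neighbors of $v_1v_2$, so $\sigma(u_2v_2)$ may fail to become available for $v_1v_2$ after recoloring; moreover, if $|A(v_1v_2)\cap A(v_2v_3)|$ is as small as $3$, the single usable recoloring color might not lie in any $A(e')$ with $e'$ not $3$-facially adjacent to $u_2v_2$. I expect that a symmetric recoloring at $v_7$ (invoked when $\sigma(u_7v_7)\in[7,10]$ and the argument at $v_2$ stalls), combined with a short case analysis on the position of the second $2$-vertex of $\alpha$, resolves both bottlenecks; this is the most delicate part of the argument and mirrors the maneuvers of the previous $7$-face lemmas.
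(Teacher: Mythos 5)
Your overall strategy (contract $\alpha$, count available colors, recolor the pendant edges at the two $3$-vertices via \cref{lem:3vert-recolor}, finish with \cref{thm:Hall}) is the same as the paper's, but as written the proposal has genuine gaps. First, your case split is incomplete: you treat only $|A(\alpha)|\ge 7$ and $|A(\alpha)|=6$, but $|A(\alpha)|=5$ is perfectly possible (all four edges incident with $2$-vertices may have the same five available colors, with the remaining edges' lists contained in them), and there Hall fails outright at $S=E(\alpha)$. The paper devotes its case \textbf{(1)} to this, recoloring \emph{both} $u_2v_2$ and $u_7v_7$ with colors chosen from intersections such as $A(v_1v_2)\cap A(v_2v_3)\cap A(v_4v_5)$ so that two new colors enter $A(\alpha)$ while the recoloring colors stay available on $v_4v_5$ and $v_3v_4$; nothing in your sketch covers this. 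Second, your claim that for $|A(\alpha)|\ge 7$ ``a brief case analysis settles $|S|\in\{6,7\}$'' is not true by counting alone: one edge $e$ of $\alpha$ may hold two private colors while the other six edges share only five, so Hall's condition can fail at $|S|=6$ even with $|A(\alpha)|\ge 7$. The paper's case \textbf{(3)} resolves exactly this by a recoloring of $u_2v_2$ into a color of $A(v_1v_2)\cap A(v_2v_3)\cap A(e'')$ for a suitable $e''$; you cannot dispense with that step.

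Third, the part you yourself flag as ``the most delicate'' is precisely where the real work lies, and hoping that ``a symmetric recoloring at $v_7$'' fixes it is not enough. In the paper's case \textbf{(2.1.2)} (when $\sigma(u_7v_7)\in A(\alpha)$, the second $2$-vertex is $v_4$, and the only usable recoloring color for $u_2v_2$ is not seen by any edge far from $u_2v_2$), the resolution is not a mirror-image recoloring: one must first recolor $u_7v_7$ and then argue, by inspecting the colors of the edges $e_i^j$ on \emph{both} faces incident with $u_2v_2$ (using $d(u_2)\ge 3$, \cref{lem:2-con}, and the exact shape of the lists forced by the stall), that $u_2v_2$ can subsequently be recolored with the missing color of $A(\alpha)$, after which a careful verification of Hall's condition on all six-element subsets is still needed. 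Also note that your ``without loss of generality $\sigma(u_2v_2)\in[7,10]$'' is not a true symmetry, since the position of the second $2$-vertex ($v_3$ versus $v_4$) breaks it; the paper explicitly warns that the two cases are only ``similar, although not completely symmetric.'' So the skeleton is right, but the cases you omit or defer are exactly the ones that carry the difficulty of the lemma.
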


\begin{proof}
	Suppose the contrary and let $\alpha$ be a $7$-face in $G$ incident with at least two $2$-vertices,
	where one of them, call it $v_1$, has two $3$-neighbors. 
	\begin{figure}[htp!]
		\centering
		\begin{subfigure}[b]{.45\textwidth}
			$$
				\includegraphics[scale=\figscale]{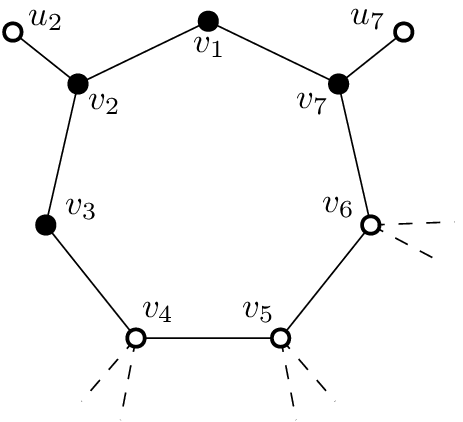}
			$$
			\caption{}
			\label{fig:7face2vertsA}
		\end{subfigure}	
		\begin{subfigure}[b]{.45\textwidth}
			$$
				\includegraphics[scale=\figscale]{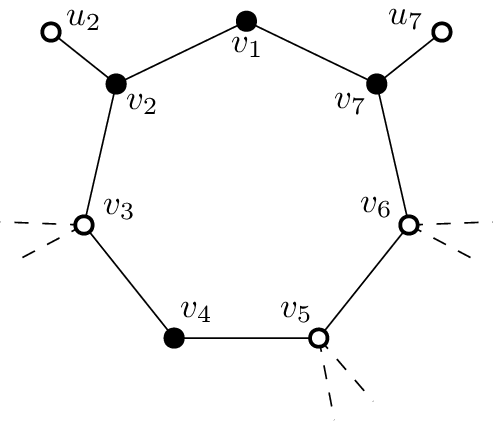}
			$$
			\caption{}
			\label{fig:7face2vertsB}
		\end{subfigure}		
		\caption{A $7$-face with at least two incident $2$-vertices, where one of them has two $3$-neighbors.}
		\label{fig:7face2verts}
	\end{figure}		
	Note that by symmetry we may also assume that either $v_3$ or $v_4$ is a $2$-vertex,
	hence there are two possibilities as depicted in Figure~\ref{fig:7face2verts}.
	Moreover, by Lemmas~\ref{lem:separating-cycle} and~\ref{lem:5face4vertex}, 
	$v_4v_5$ is $3$-facially adjacent neither to $u_2v_2$ nor to $u_7v_7$, 
	and so recoloring $u_2v_2$ and/or $u_7v_7$ does not change the set of available colors for $v_4v_5$.
	
	Consider a \fec $\sigma$ of $G/\alpha$ using at most $10$ colors.
	In $G$, $\sigma$ induces a coloring with only the edges of $\alpha$ being non-colored.
	Every non-colored edge incident with a $2$-vertex has at least $5$ available colors 
	and every other edge has at least $4$ available colors.
	Moreover, for every two edges $e_1$ and $e_2$ of $\alpha$ which are both incident with the same $2$-vertex,
	we have that $|A(e_1) \cap A(e_2)| \ge 4$.
	By assumption, there are at least two $2$-vertices in $\alpha$ and thus at least four edges have at least $5$ available colors.	
	This implies that the union of available colors of every subset of $k$ edges is of size at least $k$, for $k \le 5$.
	We divide the proof into three cases regarding the number of colors in the union $A(\alpha)$.
	
	\medskip
	\noindent \textbf{(1)} \quad Suppose first that $|A(\alpha)| = 5$, say $A(\alpha) = [1,5]$. Then $A(v_1v_2) = A(v_1v_7) = A(\alpha)$.
	We may also assume that $\sigma(u_2v_2)=6$ and $\sigma(u_7v_7)=7$.
	We intend to recolor the edges $u_2v_2$ and $u_7v_7$ with two colors $c_1$ and $c_2$ from $A(\alpha)$ 
	such that after recoloring, $c_1$ and $c_2$ will still be available colors for some edges of $\alpha$,
	and so the colors of $[1,7]$ will be available for $E(\alpha)$.
	%and so the size of $A(\alpha)$ will increase to $7$.		
	
	By \cref{lem:3vert-recolor}, $u_2v_2$ can be recolored with at least two colors from $A(v_1v_2) \cap A(v_2v_3)$. 
	Since $|A(v_1v_2) \cap A(v_2v_3) \cap A(v_4v_5)| \ge 4$ in both cases depicted in Figure~\ref{fig:7face2verts}, 
	we can recolor $u_2v_2$ with a color $c_1 \in A(v_1v_2) \cap A(v_2v_3) \cap A(v_4v_5)$, 
	and thus make the color $6$ available for $E(\alpha)$. 
	Next, we recolor $u_7v_7$ with (possibly the only) color from $A(v_1v_7) \cap A(v_6v_7)$, 
	and thus make the color $7$ available for $E(\alpha)$.
	Note that then $c_1$ is still available for $v_4v_5$, $c_2$ is still available for $v_3v_4$, 
	hence all colors of $[1,7]$ are available for $E(\alpha)$.
	
	%By \cref{lem:3vert-recolor}, for $u_2v_2$ at least two colors from $A(v_1v_2) \cap A(v_2v_3)$ are available.
	%Since $|A(v_1v_2) \cap A(v_2v_3) \cap A(v_4v_5)| \ge 4$ in both cases depicted in Figure~\ref{fig:7face2verts},
	%we can always recolor $u_2v_2$ with a color $c_1$ which is in $A(v_4v_5)$, and thus introduce $6$ to $A(\alpha)$.
	%Next, we recolor $u_7v_7$ with its (possibly the only) available color $c_2$ from $A(v_1v_2) \cap A(v_2v_3)$, and thus introduce $7$ to $A(\alpha)$.
	%Note that $c_2$ remains in $A(v_3v_4)$, and therefore, after recoloring the two edges, $|A(\alpha)| = 7$.	
	
	It remains to show that the union of available colors of any six edges of $\alpha$ contains at least $6$ colors.
	Suppose this is not true, 
	and there is $e' \in E(\alpha)$ such that the set $A$ of available colors for $E(\alpha) \setminus \set{e'}$ is of size $5$.
	Then, the set of available colors for $e'$ contains exactly two colors $c_1',c_2' \in [1,7]\setminus A$, 
	and it is easy to see that $\set{c_1',c_2'} = \set{c_1,c_2}$. 
	However, $u_2v_2$ is colored with $c_1 \in A(v_4v_5)$, 
	and $u_7v_7$ is colored with $c_2 \in A(v_3v_4)$, 
	therefore $e'$ can be neither $v_4v_5$ nor $v_3v_4$, a contradiction. 
	Thus, by \cref{thm:Hall}, we can color each non-colored edge with a distinct color from the set $[1,7]$, 
	which provides a \fec of $G$ with at most $10$ colors.
	
	%and that there are six non-colored edges of $\alpha$ with the union of available colors of size $5$.
	%This means that the remaining non-colored edge has $2$ available colors that do not appear as available colors for any other edge.
	%Since there are at least four edges with at least $5$ available colors, and the colors $6$ and $7$ after recoloring appear on at least
	%three edges each, the only candidates to appear on one edge are $c_1$ and $c_2$.
	%Hence, by the choice of color for $u_2v_2$, the edge $v_4v_5$ must be the only edge with the colors $c_1$ and $c_2$ available.
	%However, recall that $c_2 \in A(v_3v_4)$, a contradiction.
	%Thus, by \cref{thm:Hall}, we can color each non-colored edge with a distinct color, hence extending $\sigma$ to all edges of $G$.
	
	\medskip
	\noindent \textbf{(2)} \quad Now, suppose that $|A(\alpha)| = 6$, say $A(\alpha) = [1,6]$.
	First, note that at most one of the edges $u_2v_2$ and $u_7v_7$ is colored with a color from $A(\alpha)$,
	otherwise $|A(\alpha)| \ge |A(v_1v_2)| + 2 \ge 7$. 
	Observe that we can proceed in this way, since the cases {\bf(3)}, when $|A(\alpha)| \ge 7$,
	and {\bf(2)}, when $|A(\alpha)| = 6$, are analyzed independently from each other.
	So, we may assume that $\sigma(u_2v_2) = 7$ or $\sigma(u_7v_7) = 7$.
	We suppose the former, i.e., $\sigma(u_2v_2) = 7$, and note that the proof for the second case proceeds similarly, although not completely symmetrically, 
	due to the assumption that one of the vertices $v_3$ and $v_4$ is a $2$-vertex.
	We consider two cases regarding the color of $u_7v_7$.
	
	\smallskip
	\noindent \textbf{(2.1)} \quad Suppose first that $u_7v_7$ is colored with a color from $A(\alpha)$, say $\sigma(u_7v_7) = 6$.
	Then, $A(v_1v_2) = A(v_1v_7) = A(\alpha) \setminus \set{6} = [1,5]$.	
	We split this case further into two subcases, regarding which of the vertices $v_3$ and $v_4$ is a $2$-vertex
	(recall that, by symmetry, we know precisely one of $v_3$, $v_4$ is of degree $2$).
	
	\smallskip
	\noindent \textbf{(2.1.1)} \quad If $v_3$ is a $2$-vertex, then $|A(v_1v_2) \cap A(v_2v_3)| \ge 4$
		and $|A(v_1v_2) \cap A(v_2v_3) \cap A(v_6v_7)| \ge 3$.
		Thus, by \cref{lem:3vert-recolor}, we can recolor $u_2v_2$ with a color $c_1$ from $A(v_1v_2) \cap A(v_2v_3) \cap A(v_6v_7)$.
		By Lemmas~\ref{lem:separating-cycle} and~\ref{lem:5face4vertex},
		the set of available colors for $v_6v_7$ does not change. 
		Therefore, the set of available colors for $E(\alpha)$ changes to $[1,7]$, 
		and it only remains to show that any set $E \subseteq E(\alpha)$ with $|E| = 6$ 
		has its set of available colors of size at least $6$. 
		So, suppose the contrary, and let $e \in E(\alpha)$ be such that the set of available colors 
		for $E(\alpha)\setminus \set{e}$ is of size $5$. 
		This means that there are two colors in $[1,7]$ that are available only for $e$. 
		Note that all colors of $[1,5] \setminus \set{c_1}$ are available for $v_1v_2$ and $v_1v_7$, 
		and color $7$ is available for $v_1v_2$, $v_2v_3$, $v_3v_4$, and $v_1v_7$. 
		So, the above two colors must be $c_1$ and $6$. 
		However, since $c_1$ is available for $v_6v_7$, while $6$ is not 
		(recall that $u_7v_7$ is colored with $6$), 
		no edge $e \in E(\alpha)$ can have the required property, 
		a contradiction. 
		Hence, we can apply \cref{thm:Hall} to extend the present coloring of $G/\alpha$ to $G$.
		
		%By \cref{lem:separating-cycle}, $A(v_6v_7)$ does not change.
		%In this way, $|A(\alpha)| = 7$ and it only remains to show that the union of available colors of any six edges contains at least $6$ colors.
		%So, suppose the contrary and let $e \in E(\alpha)$ be an edge such that $|A(\alpha) \setminus A(e)| = 5$.
		%Note that before recoloring $u_2v_2$ all colors of $A(\alpha)$, except $6$ appeared in $A(v_1v_2)$ and $A(v_1v_7)$, 
		%and after recoloring, $c_1$ was replaced by $7$.
		%Therefore, the colors appearing as available only in $A(e)$ must be $c_1$ and $6$.
		%However, since $c_1 \in A(v_6v_7)$ and $6 \notin A(v_6v_7)$ (since $\sigma(u_7v_7)=6$),
		%such an edge $e$ does not exist, a contradiction.
		%Hence, we can apply \cref{thm:Hall} to extend $\sigma$ to all edges of $\alpha$.

	\smallskip
	\noindent \textbf{(2.1.2)} \quad If $v_4$ is a $2$-vertex, then $v_3$ is a $3^+$-vertex.
		If there is a color $c_1$ from $A(v_1v_2) \cap A(v_2v_3)$ with which we can recolor $u_2v_2$
		and $c_1$ is also in the set of available colors of some edge that is not %'not' added in the revision, to double check.
		$3$-facially adjacent to $u_2v_2$,
		then we proceed as in the case {\bf (2.1.1)}. 
		So we may assume that $u_2v_2$ can only be recolored with a unique color, say, $1$, 
		meaning that $|A(v_1v_2) \cap A(v_2v_3)| = 3$, and, without loss of generality,
		$A(v_2v_3)=\set{1,2,3,6}$, $A(v_3v_4)=A(v_4v_5)=[2,6]$, $A(v_5v_6)\subset [2,6]$, and $A(v_6v_7)=[2,5]$.		
		Now, by \cref{lem:3vert-recolor}, there are at least two colors from $A(v_1v_7) \cap A(v_6v_7) \subseteq A(v_3v_4)$ to recolor $u_7v_7$,
		and we do it with color $c_2$.
		By \cref{lem:2-con}, the edge $u_2v_2$ is incident with two distinct faces; 
		let $\alpha_1$ be that incident with $v_1v_2$, and let $\alpha_2$ be the other one. 
		Next, consider $e_i^j$, the $j$-th edge following $u_2v_2$ 
		in the direction from $v_2$ to $u_2$ in $\alpha_i$, for $i = 1,2$ and $j = 1,2,3$. 
		Note that from $|A(v_2v_3)| = 4$ it follows that $d(u_2) \ge 3$, 
		hence, by \cref{lem:2-con}, $\set{e_1^1,e_1^2,e_1^3} \cap \set{e_2^1,e_2^2,e_2^3} = \emptyset$. 
		As a consequence of $\sigma(u_2v_2) = 7$, $\sigma(u_7v_7) = 6$, and $A(v_1v_2) = [1,5]$, 
		we have $\set{\sigma(e_1^1),\sigma(e_1^2)} \subseteq [8,10]$. 
		Moreover, $6 \in A(v_2v_3)$, and so $6 \notin \set{\sigma(e_2^1),\sigma(e_2^2)}$. 
		Finally, $u_2v_2$ can be recolored neither with $2$ nor with $3$; 
		therefore, $\set{\sigma(e_1^3),\sigma(e_2^3)} = \set{2,3}$. 
		The above reasoning shows that after recoloring $u_7v_7$ with $c_2$, 
		we can recolor $u_2v_2$ with $6$, 
		which transforms the set of available colors for $E(\alpha)$ to $[1,7]$. 
		As in the previous case, it remains to verify that any set $E \subseteq E(\alpha)$ 
		with $|E| = 6$ has its set of available colors of size at least $6$. 
		This is true, since $c_2$ and $6$ are available for $v_3v_4$ and $v_4v_5$, 
		$7$ is available for $v_1v_2$, $v_2v_3$, and $v_1v_7$, 
		and each color of $[1,5] \setminus \set{c_2}$ is available for $v_1v_2$ and $v_1v_7$. 
		Thus, again by \cref{thm:Hall}, we can find a required \fec of $G$.
		
		%Now, by \cref{lem:3vert-recolor}, there are at least two colors from $A(v_1v_7) \cap A(v_6v_7)$ to recolor $u_7v_7$,
		%and at least one of them, say $c_2$, is in $A(v_3v_4)$.
		%We recolor $u_7v_7$ with $c_2$, $u_2v_2$ with $6$, and obtain $|A(\alpha)| = 7$.
		%As in the previous case, it remains to verify that the union of available colors of every six edges contains at least $6$ colors.
		%This is true, since both colors being replaced, $c_2$ and $6$, appear in $A(v_3v_4)$ and $A(v4_5)$, 
		%while the other five colors from $A(\alpha)$ also appear as available colors of at least two non-colored edges.
		%Hence, we can apply \cref{thm:Hall} to extend $\sigma$ to all edges of $\alpha$.
	
	\smallskip
	\noindent \textbf{(2.2)} \quad Now, suppose that $u_7v_7$ is colored with a color not in $A(\alpha)$, say $\sigma(u_7v_7) = 8$.
		We proceed as in the previous cases. If there is a color $c_1$ from $A(v_1v_2)\cap A(v_2v_3)$ with which we can recolor $u_2v_2$
		and $c_1$ appears as an available color of a non-colored edge that is not $3$-facially adjacent to $u_2v_2$, then we are done.
		Otherwise, we may assume that all $4$ colors of $A(v_6v_7)$ are available for the edges $v_3v_4$, $v_4v_5$, and $v_5v_6$.
		Then recoloring $u_7v_7$ with a color from $A(v_1v_7) \cap A(v_6v_7)$ 
		(at least one color for such a recoloring is guaranteed by \cref{lem:3vert-recolor}) 
		increases the size of the set of available colors for $E(\alpha)$ to $7$. 
		Again, every color, that is available for $E(\alpha)$, is available for at least two edges of $\alpha$; 
		thus, we can use \cref{thm:Hall} to obtain a contradiction as above.
		
		%Otherwise, we may assume that all the edges not $3$-facially adjacent to $u_2v_2$ have the same $4$ available colors.
		%This means that we may recolor $u_7v_7$ with any color from $A(v_1v_7)\cap A(v_6v_7)$ (at least $1$ is guaranteed by \cref{lem:3vert-recolor}),
		%and increase the size of $A(\alpha)$ to $7$.
		%Again, since every color from $A(\alpha)$ appears as available color of at least two edges of $\alpha$,
		%we can apply \cref{thm:Hall} to extend $\sigma$ to all edges of $\alpha$.
	
	\medskip
	\noindent \textbf{(3)} \quad Finally, suppose that $|A(\alpha)| \ge 7$. 
	To apply \cref{thm:Hall}, we only need to show that any subset of $E(\alpha)$ of size $6$ 
	has the set of available colors of size at least $6$. 
	If this is not the case, there is a set $E \subseteq E(\alpha)$ of size $6$ 
	such that $|A(E)| = 5$, hence $E(\alpha) \setminus E = \set{e}$ implies $|A(e) \setminus A(E)| \ge 2$. 
	Clearly, we have $|A(v_1v_2) \cap A(v_2v_3) \cap A(e')| \ge 3$ for every $e' \in E$. 
	Pick an edge $e'' \in E$ that is not $3$-facially adjacent to $u_2v_2$. 
	By \cref{lem:3vert-recolor}, at least one color from $A(v_1v_2) \cap A(v_2v_3) \cap A(e'')$ 
	can be used to recolor $u_2v_2$. 
	In this way, the size of the set of colors available for $E$ increases to $6$. 
	Besides that, at least one color of $A(e)\setminus A(E)$ remains available for $e$, 
	and so the set of available colors is of size at least $6$ for any subset of $E(\alpha)$ of size $6$ that contains $e$. 
	Thus we can apply \cref{thm:Hall} again.	
	
	%the union of available colors of every subset of $6$ non-colored edges contains at least $6$ colors.
	%Suppose this is not the case and there are six edges of $\alpha$ whose union of available colors contains only $5$ colors.
	%Denote by $e$ the edge of $\alpha$ that does not belong to those six edges.
	%Clearly, we have $|A(v_1v_2) \cap A(v_2v_3) \cap A(e')| \ge 3$ for every $e' \in E(\alpha) \setminus\set{e}$.
	%Pick an edge $e''$ distinct from $e$ and not $3$-facially adjacent with $u_2v_2$ .
	%Then, by \cref{lem:3vert-recolor}, there is at least one color from $A(v_1v_2)\cap A(v_2v_3) \cap A(e'')$ available to recolor $u_2v_2$.
	%In this way, we introduce sixth color to the union of available colors of all the edges except $e$ 
	%(which still has at least $1$ available color that does not appear at any other non-colored edge),
	%and thus we can apply \cref{thm:Hall} to extend $\sigma$ to all edges of $\alpha$.
	%This establishes the lemma.
\end{proof}

%Lemma 17
\begin{lemma}
	\label{lem:9face2vertex}
	No $9$-face in $G$ is incident with a $2$-vertex.
\end{lemma}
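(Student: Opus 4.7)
The plan is to argue by contradiction, in the manner established by the earlier lemmas. Suppose a $9$-face $\alpha$ in $G$ is incident with a $2$-vertex $v$; label the edges of $\alpha$ cyclically as $e_1,\ldots,e_9$ with $e_1 = vu_1$ and $e_9 = vu_9$, and let $\beta$ denote the other face incident with $v$. Let $G' = G/\alpha$ and, by the minimality of $G$, take a \fec $\sigma$ of $G'$ using at most $10$ colors. This induces a partial \fec of $G$ in which precisely the nine edges of $\alpha$ are non-colored.

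I would first establish lower bounds on the available-color lists. For each $e_i$, the $3$-facial neighborhood in $G$ contains at most six colored edges, all lying on the face of $e_i$ distinct from $\alpha$ (Lemmas~\ref{lem:separating-cycle} and~\ref{lem:face-length} prevent any other overlap), whence $|A(e_i)| \ge 4$. For $e_1$ and $e_9$ the bound improves: since $v$ has degree $2$, the edges $e_1$ and $e_9$ are facially adjacent on $\beta$ (meeting at $v$), so the $3$-facial neighborhood of $e_1$ through $\beta$ contains $e_9$, which is non-colored, together with at most five colored edges; thus $|A(e_1)|, |A(e_9)| \ge 5$. I would then try to extend $\sigma$ by Hall's Theorem; the only possible obstructions are (i) $|A(\alpha)| \le 8$, or (ii) some $5$-subset of $\{e_2,\ldots,e_8\}$ shares a common $4$-element list, since in every other case the Hall condition follows immediately from the bounds above.

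To resolve such obstructions, I would enlarge $A(\alpha)$ through recoloring. When $\alpha$ has a $3$-vertex $x$, its unique edge $xx_0$ off $\alpha$ is colored, and Lemma~\ref{lem:3vert-recolor} makes it possible to swap the color of $xx_0$ for a fresh one that enters $A(\alpha)$; repeating this a bounded number of times drives $|A(\alpha)|$ up so that Hall's Theorem (or Theorem~\ref{thm:list-assignment} on the conflict graph of $E(\alpha)$) succeeds. When $\alpha$ has no $3$-vertex, its boundary decomposes into $2$-vertices and $4^+$-vertices; by Lemma~\ref{lem:3thread} no three consecutive $2$-vertices occur, so there must be several $4^+$-vertices on $\alpha$ contributing colored off-$\alpha$ edges that can be permuted to enrich $A(\alpha)$. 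The main obstacle will be the detailed case analysis required when $|A(\alpha)|$ is small and $\alpha$ carries either a $2$-thread or additional $2$-vertices besides $v$; this mirrors the delicate combinatorial arguments of Lemma~\ref{lem:7face6face}, where several recoloring steps must be chained before Hall's Theorem closes the argument.
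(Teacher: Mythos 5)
There is a genuine gap. Your plan contracts the whole face ($G' = G/\alpha$) and then tries to give all nine edges of $\alpha$ distinct colors via Hall's Theorem, repairing failures by recoloring. But with nine mutually constrained edges (on a $9$-face every pair at facial distance at most $3$ conflicts, so each uncolored edge conflicts with six others) and lists of size only $4$ (or $5$ at the two edges of the $2$-vertex), the obstructions you list are not rare pathologies that a bounded amount of recoloring removes — they can be structural. For instance, nothing prevents all colored edges in the $3$-facial neighborhoods of $E(\alpha)$ from using a fixed set of six colors, in which case every list equals the same $4$-set: Hall then needs nine distinct colors and fails, and even dropping the ``all distinct'' requirement does not help, since on a $9$-face at most two edges can pairwise be at facial distance at least $4$, so four colors cover at most eight edges. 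Your repair tools do not close this: \cref{thm:list-assignment} does not apply because the conflict degrees ($6$) exceed the list sizes ($4$--$5$); \cref{lem:3vert-recolor} only works at $3$-vertices and gains at most one or two colors, far short of what is needed; and the claim that colored edges at $4^+$-vertices ``can be permuted to enrich $A(\alpha)$'' is not backed by any lemma in the paper and is false in general, since such edges are constrained by their own neighborhoods.

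The paper takes a different and essential route: it does not contract $\alpha$ but instead identifies two edges of $\alpha$ at facial distance $4$ (namely $v_1v_1'$ and $v_4v_4'$ in its labeling), checks via \cref{lem:separating-cycle} and \cref{lem:5face4vertex} that these edges are not $3$-facially adjacent in $G$, and so obtains a partial \fec of $G$ in which those two edges already carry the same color and only seven edges of $\alpha$ remain uncolored, with lists of size at least $3$ (and $4$ at the $2$-vertex). The extension is then completed not by Hall but by the Combinatorial Nullstellensatz (\cref{thm:nullstellensatz}), exhibiting a monomial with nonzero coefficient in the conflict polynomial. Engineering the repeated color in advance and finishing with the polynomial method is precisely what makes the lemma go through; your sketch is missing both ingredients.
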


\begin{proof}
	\begin{figure}[htp!]
		$$
			\includegraphics[scale=\figscale]{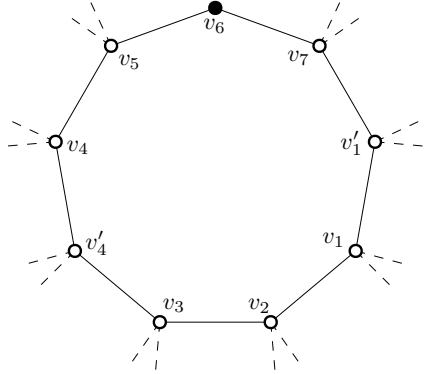}
		$$
		\caption{A reducible $9$-face with an incident $2$-vertex.}
		\label{fig:9face2vert}
	\end{figure}
	Suppose the contrary and let $\alpha$ be a $9$-face incident with a $2$-vertex. 
	We label the vertices as depicted in Figure~\ref{fig:9face2vert}.
	Let $G'$ be the graph obtained by identifying the edges $v_1v_1'$ and $v_4v_4'$, 
	and let $\sigma'$ be a \fec of $G'$ using at most $10$ colors.
	From the coloring of $G$ induced by $\sigma'$ we create the coloring $\sigma$ 
	by uncoloring all edges of $E(\alpha) \setminus \set{v_1v_1',v_4v_4'}$.
	Observe that the edges $v_1v_1'$ and $v_4v_4'$ are not $3$-facially adjacent in $G$, 
	otherwise $G$ would contain a separating cycle of length at most $7$, 
	or a $5$-face with an incident $2$-vertex, contradicting \cref{lem:separating-cycle} or \cref{lem:5face4vertex}.
	Therefore, $\sigma$ is a partial \fec of $G$, in which the edges $v_1v_1'$ and $v_4v_4'$ receive the same color.
	%Thus, after removing the color of every edge of $\alpha$ distinct from $v_1v_1'$ and $v_4v_4'$, 
	%$\sigma$ induces a partial edge coloring of $G$ in which the edges $v_1v_1'$ and $v_4v_4'$ receive the same color. 
	Note that each of the edges $v_1v_2, v_2v_3, v_3v_4', v_4v_5, v_7v_1'$ has at least $3$ available colors, 
	while the two edges $v_5v_6$ and $v_6v_7$, incident with the $2$-vertex $v_6$, have at least $4$ available colors.
	Next, we associate with each edge of $\alpha$ distinct from $v_1v_1'$ and $v_4v_4'$ a variable $X_i$, $i\in \{1,\ldots ,7\}$, in clockwise order starting from $v_1v_2$.
	To apply \cref{thm:nullstellensatz}, we define the following polynomial: 
	\begin{align*}
		F(X_1,\ldots ,X_7) = & (X_1 - X_2)(X_1 - X_3)(X_1 - X_6)(X_1 - X_7)(X_2 - X_3)\\
						   \cdot & (X_2 - X_4)(X_2 - X_7)(X_3 - X_4)(X_3 - X_5)(X_4 - X_5)\\
						   \cdot & (X_4 - X_6)(X_4 - X_7)(X_5 - X_6)(X_5 - X_7)(X_6 - X_7).
	\end{align*}
	The coefficient of the monomial $X_1^2 X_2^2 X_3^2 X_4^2 X_5^2 X_6^3 X_7^2$ in $F(X_1,\ldots X_7)$ is equal to $-3$, 
	thus by \cref{thm:nullstellensatz}, we can extend the coloring $\sigma$ to the \fec of $G$ using at most $10$ colors.
\end{proof}

%Lemma 18
\begin{lemma}
	\label{lem:10face2vertex}
	Every $10$-face in $G$ is incident with at most two $2$-vertices.
\end{lemma}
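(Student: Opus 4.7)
Suppose for contradiction that a $10$-face $\alpha$ of $G$ is incident with $k \ge 3$ $2$-vertices. By \cref{cor:num2verts}, $k \le 5$ and $|S_2(\alpha)| \le 2$. Combined with \cref{lem:3thread} and \cref{lem:2thread2vert}, the admissible arrangements of $2$-vertices along $\alpha$ split into: (i) $|S_2(\alpha)| = 0$, with $k$ isolated $2$-vertices separated on $\alpha$ by $3^+$-vertices; (ii) $|S_2(\alpha)| = 1$, with a single $2$-thread together with one or two additional isolated $2$-vertices lying outside the facial-distance-$3$ neighborhood of the thread on $\alpha$; or (iii) $|S_2(\alpha)| = 2$, with two $2$-threads and no other $2$-vertex.

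For each configuration, my plan is to contract the face, setting $G' = G/\alpha$. By the minimality of $G$, $G'$ admits a \fec $\sigma$ with at most $10$ colors. Pulling $\sigma$ back to $G$ and uncoloring $E(\alpha)$ leaves only the edges of $\alpha$ non-colored. A direct accounting of the $3$-facial neighbors of each $e \in E(\alpha)$ through the other face incident to $e$ yields the following lower bounds: an edge between two $3^+$-vertices of $\alpha$ has $|A(e)| \ge 4$; an edge incident to exactly one isolated $2$-vertex has $|A(e)| \ge 5$; and an edge incident to a $2$-thread has $|A(e)| \ge 6$, since the other face of such an edge shares an additional uncolored edge of $\alpha$. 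With at least three $2$-vertices on $\alpha$, at least six edges of $\alpha$ carry lists of size $\ge 5$, and extending $\sigma$ to $E(\alpha)$ reduces to a list edge-coloring on the conflict graph (essentially $C_{10}^3$, with any extra adjacencies through other faces already absorbed in the list sizes), which I would resolve by applying \cref{thm:Hall} in each configuration.

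When the naive Hall condition fails---which can occur only in tight cases where $|A(\alpha)|$ drops to $5$ or $6$---I would invoke the recoloring trick of \cref{lem:3vert-recolor}, exactly as in the proofs of \cref{lem:7face2th3nei} and \cref{lem:7face6face}: an edge $uu'$ with $u$ a $3$-vertex on $\alpha$ can be recolored so as to introduce a fresh color into $A(\alpha)$, while preserving that color's availability on some edge of $\alpha$ not $3$-facially adjacent to the recolored edge. In the most resistant remaining cases, I would mirror \cref{lem:9face2vertex} and apply \cref{thm:nullstellensatz}, encoding the conflicts between non-colored edges as a polynomial and verifying the non-vanishing of a suitable monomial by computer.

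The main obstacle will be case (ii) with $n_2(\alpha) = 3$ (a single $2$-thread plus one isolated $2$-vertex): here the list sizes are at the minimum permitted by the accounting, $|A(\alpha)|$ can be driven as low as $5$ by an adversarial $\sigma$, and a delicate position-dependent case split---paralleling subcases (2.1.1), (2.1.2), and (2.2) of the proof of \cref{lem:7face6face}---will be required to choose the recolored edge so that the newly-introduced color remains usable on some edge of $\alpha$ that is not $3$-facially adjacent to the recolored edge. The homogeneous cases (iii) and (i) with $k \ge 4$, by contrast, should be comparatively routine, since the abundance of expanded-list edges makes the union $A(\alpha)$ large enough for a direct application of \cref{thm:Hall}.
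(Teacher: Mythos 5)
There is a genuine gap at the heart of your plan: after contracting the whole face you are left with all ten edges of $\alpha$ uncolored, and none of the tools you name can finish from the list sizes you compute. \cref{thm:Hall} produces \emph{pairwise distinct} colors, so with ten uncolored edges and only ten colors it needs $|A(\alpha)|=10$ together with Hall's condition for every subset; your accounting only gives lists of sizes $4$, $5$, $6$, whose union can easily be as small as six or seven colors, so this is not merely a ``tight case'' failure. A list-coloring of the conflict graph (which contains $C_{10}^3$, a $6$-regular graph) via \cref{thm:list-assignment} would need lists of size at least $6$ on every edge, which the edges between $3^+$-vertices do not have. Any successful extension must re-use colors on far-apart edges of $\alpha$, and your proposal has no mechanism to force that. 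Your fallback is also unsound as stated: \cref{lem:3vert-recolor} requires a $3$-vertex, and nothing guarantees that a $10$-face with three $2$-vertices has any $3$-vertex on its boundary (all other vertices could be $4^+$-vertices), so the recoloring trick from \cref{lem:7face2th3nei} and \cref{lem:7face6face} need not be available at all.

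The paper proceeds quite differently. It first observes that it suffices to show that two $2$-vertices of a $10$-face cannot be at facial distance $1$, $3$, or $4$ (three $2$-vertices would force some pair at such a distance), which removes your case enumeration over $S_1(\alpha),S_2(\alpha)$. Then, instead of contracting $\alpha$, it identifies two edges of $\alpha$ at facial distance at least $4$ (e.g.\ $v_4v_5$ with $v_8v_9$, or $v_5v_6$ with $v_9v_{10}$); pulling back the coloring of the smaller graph pre-places one repeated color on $\alpha$ and leaves only eight uncolored edges, with correspondingly smaller but workable lists ($\ge 5$ near the $2$-vertices, $\ge 3$ elsewhere). Even then the extension is not done by \cref{thm:Hall} but by \cref{thm:nullstellensatz}, with computer-verified nonzero coefficients of suitable monomials. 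So the edge-identification step (which builds the necessary color repetition into the partial coloring) and the reduction to forbidden pairwise distances are the missing ideas; without them, the route you sketch does not go through.
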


\begin{proof}
	Suppose the contrary and let $\alpha$ be a $10$-face in $G$ incident with at least three $2$-vertices.	
	Let the vertices of $\alpha$ be labeled as depicted in Figure~\ref{fig:10face}.
	We prove the lemma by considering three cases regarding the distances between $2$-vertices.
	Namely, it suffices to show that the facial-distance in the face $\alpha$ between two $2$-vertices 
	does not belong to the set $\set{1,3,4}$.
	%, and that the sequence 
	%$S = d(v_1),\dots,d(v_{10})$ is neither 
	%$S_1 = 2,3^+,2,3^+,2,3^+,2,3^+,2,3^+$ nor 
	%$S_2 = 2,3^+,3^+,3^+,3^+,3^+,2,3^+,2,3^+$. 
	We do it by using \cref{thm:nullstellensatz}, 
	in which the variable $X_i$ is associated with the edge $v_iv_{i+1}$
	for every $i \in [1,9]$, 
	and the variable $X_{10}$ is associated with the edge $v_1v_{10}$.	
	%Namely, it suffices to show that there are no vertices in $\alpha$ at distance $1$, $3$, or $4$
	%to cover all possible configurations of three $2$-vertices on a $10$-face.
	\begin{figure}[htp!]
		$$
			\includegraphics[scale=\figscale]{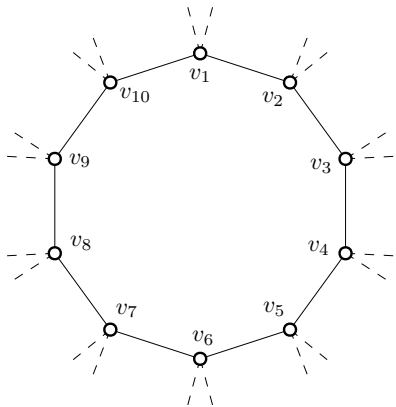}
		$$
		\caption{Labeling of the $10$-face $\alpha$.}
		\label{fig:10face}
	\end{figure}	
	
	\medskip
	\noindent \textbf{(1)} \quad Suppose first that there are two adjacent $2$-vertices in $\alpha$, say $v_1$ and $v_2$.
	Consider the graph $G_1'$ obtained from $G$ by identifying the edges $v_4v_5$ and $v_8v_9$.
	It admits a \fec $\sigma_1'$ using at most $10$ colors.
	The coloring of $G$ induced by $\sigma_1'$ is not necessarily a \fec.
	However, by Lemmas~\ref{lem:separating-cycle} and~\ref{lem:3thread}, $v_4v_5$ and $v_8v_9$ are not $3$-facially adjacent in $G$,
	hence uncoloring the edges of $E(\alpha) \setminus \set {v_4v_5,v_8v_9}$ yields a partial \fec $\sigma_1$ of $G$ with $\sigma_1(v_4v_5) = \sigma_1(v_8v_9)$.
	%so $\sigma$ induces a coloring of $G$ with $v_4v_5$ and $v_8v_9$ receiving the same color.	
	%However, due to possible conflicts, we recolor the remaining edges of $\alpha$.
	Note that in this setting, the edges $v_1v_2$, $v_2v_3$, and $v_1v_{10}$ have at least $5$ available colors,
	and the other five edges of $\alpha$ have at least $3$ available colors.
	%For every $i \in \set{1,\dots,9}$ assign with the edge $v_iv_{i+1}$ the variable $X_i$, and with $v_1v_{10}$ the variable $X_{10}$.
	Now, we define the polynomial:
	\begin{align*}
		F_1(X_1,\ldots,X_{10}) = &(X_1 - X_2)(X_1 - X_3)(X_1 - X_9)(X_1 - X_{10})\\
							\cdot 	&(X_2 - X_3)(X_2 - X_5)(X_2 - X_9)(X_2 - X_{10})\\
							\cdot 	&(X_3 - X_5)(X_3 - X_6)(X_3 - X_{10})(X_5 - X_6)(X_5 - X_7)\\
							\cdot 	&(X_6 - X_7)(X_6 - X_9)(X_7 - X_9)(X_7 - X_{10})(X_9 - X_{10})\,.
	\end{align*}
	Expanding it, we see that the coefficient of the monomial $X_1^4 X_2^4 X_3^2 X_5^2 X_6^1 X_7^2 X_{10}^3$
	in $F_1(X_1,\ldots,X_{10})$ is $1$, and thus, by \cref{thm:nullstellensatz}, 
	we can extend $\sigma_1$ to $G$, a contradiction.
	
	\medskip
	\noindent \textbf{(2)} \quad Suppose now that there are $2$-vertices at distance $3$ in $\alpha$, say $v_1$ and $v_4$.
	Consider the graph $G_2'$ obtained from $G$ by identifying the edges $v_5v_6$ and $v_9v_{10}$.
	It admits a \fec $\sigma_2'$ using at most $10$ colors.
	By Lemmas~\ref{lem:separating-cycle} and~\ref{lem:3thread}, $v_5v_6$ and $v_9v_{10}$ are not $3$-facially adjacent,
	and so by uncoloring the edges of $E(\alpha) \setminus \set{v_5v_6,v_9v_{10}}$,
	we obtain a partial \fec $\sigma_2$ of $G$ with $\sigma_2(v_5v_6) = \sigma_2(v_9v_{10})$.
	%Due to possible conflicts, we again recolor the remaining edges of $\alpha$.
	Note that in this setting the edges $v_1v_2$, $v_1v_{10}$, $v_3v_4$, and $v_4v_5$ have at least $4$ available colors,
	and the other four edges of $\alpha$ have at least $3$ available colors.
	%For every $i \in \set{1,\dots,8}$ assign with the edge $v_iv_{i+1}$ the variable $X_i$, and with $v_1v_{10}$ the variable $X_{10}$.
	We define the polynomial:
	\begin{align*}
		F_2(X_1,\ldots ,X_{10}) = &(X_1 - X_2)(X_1 - X_3)(X_1 - X_4)(X_1 - X_8)(X_1 - X_{10})\\
							\cdot 	&(X_2 - X_3)(X_2 - X_4)(X_2 - X_{10})\\
							\cdot 	&(X_3 - X_4)(X_3 - X_6)(X_3 - X_{10})(X_4 - X_6)(X_4 - X_7)\\
							\cdot 	&(X_6 - X_7)(X_6 - X_8)(X_7 - X_8)(X_7 - X_{10})(X_8 - X_{10})\,.
	\end{align*}
	Realizing that the coefficient of the monomial $X_1^3 X_2^2 X_3^2 X_4^3 X_6^2 X_7^2 X_8^1 X_{10}^3$
	in $F_2(X_1,\ldots,X_{10})$ is $-1$,
	we infer that $\sigma_2$ can be extended to $G$ by \cref{thm:nullstellensatz}, a contradiction.
	
	\medskip
	\noindent \textbf{(3)} \quad Suppose now that there are $2$-vertices at distance $4$ in $\alpha$, say $v_1$ and $v_5$.
	Note that the argument of the case {\bf(2)} is also valid here, 
	since the only difference is that the edge $v_3v_4$ may now have only $3$ available colors. 
	This is sufficient for applying \cref{thm:nullstellensatz} to extend $\sigma_2$ to $G$, 
	since the exponent of $X_3$ in the above monomial of $F_2(X_1,\ldots,X_{10})$ is 2.	
	%Suppose now that there are $2$-vertices at distance $4$ in $\alpha$, say $v_1$ and $v_5$.
	%Note that we may repeat the argument of the case \textbf{(2)}, since the only change in the argument is
	%that the edge $v_3v_4$ now may have only $3$ available colors. 
	%Observe that also the polynomial $F(X_1,\ldots ,X_{10})$ remains the same.	
	%Thus, since in the monomial $X_1^3 X_2^2 X_3^2 X_4^3 X_6^2 X_7^2 X_8^1 X_{10}^3$ the exponent of $X_3$ is $2$,	
	%this monomial also satisfies the conditions of \cref{thm:nullstellensatz},
	%we can color all edges of $\alpha$, extending $\sigma$ to $G$, a contradiction.	
	%This establishes the lemma.
	
%	\medskip
%	\noindent \textbf{(4)} \quad  If $S = S_1$ or $S = S_2$, 
%	consider the partial \fec $\sigma_1$ of $G$ used in {\bf(1)}. 
%	In the case $S = S_1$ we have $|A_{\sigma_1}(e)| \ge 4$ for each non-colored edge $e$.
%	On the other hand, the assumption $S = S_2$ yields $|A_{\sigma_1}(e)| \ge 3$ 
%	for every $e \in \set{v_2v_3,v_3v_4,v_5v_6}$ and $|A_{\sigma_1}(f)| \ge 4$ 
%	for any other non-colored edge $f$. Since the coefficient of the monomial 
%	%$X_1^3 X_2^2 X_3^1 X_4^2 X_6^3 X_7^2 X_8^2 X_{10}^3$ 
%	$X_1^3 X_2^2 X_3^2 X_5^2 X_6^1 X_7^3 X_9^2 X_{10}^3$ 
%	in $F_1(X_1,\ldots,X_{10})$ is 2, 
%	by applying \cref{thm:nullstellensatz}, we can extend $\sigma_1$ to $G$, a contradiction. 
%	This establishes the lemma. 
\end{proof}

%Lemma 19
\begin{lemma}
	\label{lem:6face7face}
	If a $6$-face $\alpha_1$ and a $7$-face $\alpha_2$ of $G$ share a $2$-vertex $v$,
	and $u \neq v$ is a vertex of $\alpha_2$, then $d(u) \ge 3$.
	%If adjacent $6$-face $\alpha_1$ and $7$-face $\alpha_2$ have a common $2$-vertex in $G$, 
	%then the other vertices incident with $\alpha_2$ are $3^+$-vertices.
\end{lemma}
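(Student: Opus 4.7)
The plan is to argue by contradiction via contracting the $6$-face $\alpha_1$ and extending the resulting coloring back to $G$ using \cref{thm:Hall}. Suppose some $u \in V(\alpha_2) \setminus \{v\}$ has $d(u)=2$. Label $\alpha_1$ cyclically as $v, a, x_1, x_2, x_3, b$, so that $a$ and $b$ are the neighbors of $v$ on $\alpha_1$ (and hence also on $\alpha_2$, since $d(v)=2$); by \cref{lem:6face2thread}, $d(a), d(b) \ge 4$, so $u$ lies among the four vertices $u_1, u_2, u_3, u_4$ of $\alpha_2$ listed cyclically from $a$ to $b$. By minimality, $G' = G/\alpha_1$ admits a \fec $\sigma'$ with at most $10$ colors, which induces a partial \fec of $G$ with the six edges of $\alpha_1$ uncolored.

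Since $\alpha_1$ has diameter $3$ as a cycle, all pairs of its edges are $3$-facially adjacent, so the extension requires a system of distinct representatives from the lists of $\sigma'$-available colors. For $va$ and $bv$, the colored $3$-facial neighbors are exactly the five edges of $\alpha_2 \setminus \alpha_1$ (namely $au_1, u_1u_2, u_2u_3, u_3u_4, u_4b$), so $|A(va)|, |A(bv)| \ge 5$; each of $ax_1, x_1x_2, x_2x_3, x_3b$ has at most six colored $3$-facial neighbors, all lying on its other incident face of length $\ge 5$ (by \cref{lem:face-length}), so $|A(e)| \ge 4$. By \cref{thm:Hall}, it suffices to verify $|A(S)| \ge |S|$ for every $S \subseteq E(\alpha_1)$. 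For $|S| \le 4$ this is immediate; for $|S|=5$, any such $S$ must contain $va$ or $bv$, yielding $|A(S)| \ge 5$. The crux is the case $|S|=6$, i.e., showing $|A(E(\alpha_1))| \ge 6$.

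Suppose for contradiction that $|A(E(\alpha_1))| \le 5$, so at least five colors are \emph{forbidden} in the sense of appearing on a colored $3$-facial neighbor of every edge of $\alpha_1$. The constraint from $va$ forces every forbidden color to occur among the $\sigma'$-colors of the five edges of $\alpha_2 \setminus \alpha_1$. These five edges form, together with the contracted images of $va$ and $bv$, the boundary of the $5$-face into which $\alpha_2$ collapses under the contraction, and hence carry five pairwise distinct colors; so the forbidden colors are exactly these five. We then exploit the $2$-vertex $u$: its two incident edges lie on a common secondary face $\beta$ with $\ell(\beta) \ge 6$ (\cref{lem:5face4vertex}), which provides slack to locally modify $\sigma'$ on $\beta$---using \cref{lem:3vert-recolor} at a $3$-neighbor of $u$ on $\alpha_2$ if one exists, and a direct swap exploiting $|A_{\sigma'}(au_1)| \ge 2$ otherwise---so that at least one of the five forbidden colors ceases to satisfy the forbidding condition. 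This forces $|A(E(\alpha_1))| \ge 6$, and \cref{thm:Hall} then extends the coloring to $G$, contradicting minimality.

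The main obstacle is the critical case $\ell(\beta)=6$, in which the triple $(\alpha_2, \beta, u)$ satisfies the same hypothesis as $(\alpha_1, \alpha_2, v)$ and the local slack at $u$ on $\beta$ is minimal; here one must avoid circular reasoning either by running the induction on a minimal counterexample chosen with an extremal property (for instance, minimising the number of $2$-vertices incident with a $6$-face), or by carrying out the recoloring argument simultaneously for both $6$-faces. For $\ell(\beta) \ge 7$, the greater length of $\beta$ comfortably admits the direct recoloring above, and a case analysis according to the position of $u$ in $\{u_1, u_2, u_3, u_4\}$ completes the verification.
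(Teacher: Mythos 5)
Your reduction (contract $\alpha_1$, get lists of sizes $5,5,4,4,4,4$, and observe via \cref{thm:Hall} that everything hinges on showing $|A(E(\alpha_1))|\ge 6$) is sound up to that point, but the step that actually carries the proof --- modifying $\sigma'$ near the $2$-vertex $u$ so that one of the five forbidden colors is released --- is not established. Your appeal to \cref{lem:3vert-recolor} does not apply in this situation: that lemma requires the two other edges at the $3$-vertex to be \emph{uncolored}, whereas here the only uncolored edges are those of $\alpha_1$, which are not incident with any $3$-neighbor of $u$; and the alternative ``direct swap exploiting $|A_{\sigma'}(au_1)|\ge 2$'' is asserted without any justification (why would a colored edge of $\alpha_2$ have two admissible replacement colors, and why would its freed color not still be blocked for the $\alpha_1$-edges whose conflicts live on their second faces rather than on $\alpha_2$?). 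Most importantly, you concede that the case $\ell(\beta)=6$ is unresolved, and the proposed remedies do not close it: a secondary extremal criterion on the minimal counterexample gives you nothing here, because the auxiliary graph $G/\alpha_1$ already has fewer vertices, so no induction on the secondary parameter is ever invoked, and the ``simultaneous recoloring for both $6$-faces'' is not carried out. As it stands, the proof has a genuine gap exactly at its crux.

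For comparison, the paper avoids recoloring altogether by a different reduction: it deletes the $2$-vertex $v$, notes that $\alpha_1$ and $\alpha_2$ merge into a $9$-face of $G-v$, and reuses the edge-identification device from the proof of \cref{lem:9face2vertex} (with $u$ playing the role of the $2$-vertex $v_6$) to obtain a coloring of $G-v$ in which one identified edge lies on $\alpha_1$ and the other on $\alpha_2$. Since these two edges share no face of $G$, their equal colors cause no conflict once $v$ is restored, so at most $8$ colors appear on the edges of $\alpha_1\cup\alpha_2$, leaving at least $2$ available colors for each of the two edges at $v$, which are then colored greedily. If you want to salvage your route, you would need to prove an explicit recoloring statement for an edge of $\alpha_2$ incident with $u$ (including the case where its second face is another $6$-face), which is essentially extra work the paper's deletion-plus-identification argument sidesteps.
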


\begin{proof}
	Suppose the contrary and let $u \neq v$ be a $2$-vertex of $\alpha_2$.
	Observe that by Lemma~\ref{lem:6face2thread}, $v$ is the only $2$-vertex incident with both $\alpha_1$ and $\alpha_2$, 
	thus $u$ is either at facial-distance $2$ or at facial-distance $3$ from $v$.
	Consider now the graph $G' = G-v$.
	Note that the remaining edges incident with either $\alpha_1$ or $\alpha_2$ form a $9$-face in $G'$.
	Label the edges of $G$ according to Figure~\ref{fig:9face2vert} with $v_6 = u$, 
	and let $\sigma'$ be a \fec with at most $10$ colors of the graph obtained from $G'$ 
	by identifying the edges $e=v_1v_1'$ and $e'=v_4v_4'$ (as in the proof of \cref{lem:9face2vertex}).
	%Let $\sigma$ be a \fec of $G'$ obtained as in the proof of Lemma~\ref{lem:9face2vertex}, 
	%with $u$ being the $2$-vertex and label the edges according to Figure~\ref{fig:9face2vert}.
	One can easily observe that in any case, one of the edges $e$ and $e'$ is incident with $\alpha_1$, while the other is incident with $\alpha_2$.
	Thus the edges $e$ and $e'$ are not incident with a common face in $G$. 
	It follows that the only conflict of the coloring $\sigma$ of $G - v$ induced by $\sigma'$ vanishes 
	when the vertex $v$ with its incident edges is added back to $G - v$.
	%Thus, when we add the vertex $v$ back, the two edges $e$ and $e'$ are not incident with a common face.
	%It follows that $\sigma$ induces a partial \fec of $G$.
	Finally, since at most $8$ colors appear on the edges incident with $\alpha_1$ and $\alpha_2$, 
	the two non-colored edges incident with $v$ both have at least $2$ available colors.
	Hence, we can extend $\sigma$ to all edges of $G$, a contradiction.
\end{proof}

%Lemma 20
\begin{lemma}
	\label{lem:7faceSeparatingCycle}
	Let $\alpha_1$ and $\alpha_2$ be distinct $7$-faces of $G$ 
	with a common $2$-vertex $v$ that has a $3$-neighbor $u$ and a $4^+$-neighbor $w$. 
	Furthermore, let $u_1$ and $w_1$ be the vertices of $\alpha_1$ adjacent to $u$ and $w$, respectively. 
	Finally, let either $d(u_1) \ge 3$ and $e_1 \in E(\alpha_1) \setminus \set{uu_1,uv,vw}$ 
	or $d(u_1) = 2$ and $e_1 = ww_1$, 
	and let $e_2 \in E(\alpha_2) \setminus \set{uv,vw}$. 
	Then, the edge $e_1$ is not $3$-facially adjacent to the edge $e_2$.
	%Let $\alpha_1$ and $\alpha_2$ be two $7$-faces in $G$ that have a common $2$-vertex $v$. 
	%Let $u$ and $w$ be the neighbors of $v$, where $d(w)\ge 4$. Then, the three edges of $\alpha_1$ 
	%that are at facial-distance at least $2$ from $u$ are not $3$-facially adjacent to any edge of $\alpha_2$.
\end{lemma}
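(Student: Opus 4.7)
I proceed by contradiction. Suppose that $e_1$ and $e_2$ are $3$-facially adjacent, i.e., they lie on a common face $\alpha_3$ at facial-distance $d \le 3$ on $\alpha_3$. Since $d(v) = 2$ forces both $uv, vw$ into $E(\alpha_1) \cap E(\alpha_2)$, and analyzing the rotations at $u$ (of degree $3$) and $w$ (of degree $\ge 4$) shows $E(\alpha_1) \cap E(\alpha_2) = \set{uv, vw}$, the hypotheses on $e_1, e_2$ give $e_1 \in E(\alpha_1) \setminus E(\alpha_2)$ and $e_2 \in E(\alpha_2) \setminus E(\alpha_1)$; thus $\alpha_3 \ne \alpha_1, \alpha_2$. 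The sub-case $d = 1$ (where $e_1, e_2$ share a vertex $x$) is dispatched directly: the common vertices of $\alpha_1, \alpha_2$ are $u, v, w$, and $v \notin e_1 \cup e_2$, so $x \in \set{u, w}$; but $x = u$ would force $e_1 = uu_1$ (excluded by hypothesis), and $x = w$ is impossible because $d(w) \ge 4$ implies $ww_1$ and $wb_3$ are non-consecutive in the rotation at $w$ and hence share no face.

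For $d \in \set{2, 3}$, the short arc on $\alpha_3$ from $e_1$ to $e_2$ consists of $e_1$, a subpath $P_{\mathrm{int}}$ of at most $d - 1 \le 2$ intermediate edges, and $e_2$. The endpoints of $P_{\mathrm{int}}$ are vertices of $e_1$ and $e_2$, and therefore lie on the $10$-cycle $D := (E(\alpha_1) \cup E(\alpha_2)) \setminus \set{uv, vw}$ that bounds the region $\alpha_1 \cup \alpha_2$. I form a cycle $C$ in $G$ by concatenating $P_{\mathrm{int}}$ with the shorter of the two arcs of $D$ between its endpoints. Since $|D| = 10$ and the endpoints of $P_{\mathrm{int}}$ lie on different halves of $D$ (one on the $\alpha_1$-half, one on the $\alpha_2$-half), this chosen $D$-arc has length at most $5$, and so $|C| \le |P_{\mathrm{int}}| + 5 \le 7$.

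To see that $C$ is separating, observe that $\alpha_3$'s boundary has length at least $5$ by \cref{lem:face-length} while contributing at most $|P_{\mathrm{int}}| + 2 \le 4$ edges to $C$ (namely $P_{\mathrm{int}}$, together with $e_1$ and possibly $e_2$, which may lie on the $D$-arc); hence at least one vertex of $\alpha_3$'s boundary lies strictly in the interior of $C$ (the side on which $\alpha_3$ sits). On the opposite side of $C$ lie the unused arc of $D$ together with $v$ (attached through the edges $uv, vw$). Thus $C$ is a separating cycle of length at most $7$, contradicting \cref{lem:separating-cycle}. The case $d(u_1) = 2$, $e_1 = ww_1$ of Sub-case~B is handled analogously, invoking $d(w) \ge 4$ throughout. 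The principal obstacle is carrying out the construction uniformly over all sub-configurations of $e_1, e_2$ positions and all choices of intermediate edges in $\alpha_3$; in a handful of borderline sub-cases where $|C| \le 4$ (e.g., if $P_{\mathrm{int}}$ is the single chord $u_1 c_1$, producing the triangle $u$-$u_1$-$c_1$-$u$), the cycle $C$ would instead bound a face of $G$ of length less than $5$, contradicting \cref{lem:face-length} rather than \cref{lem:separating-cycle}; in either event, the desired contradiction is obtained.
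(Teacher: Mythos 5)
Your overall strategy is the same as the paper's: join the intermediate edges of the facial path in $\alpha_3$ to a short arc of the $10$-cycle bounding $\alpha_1\cup\alpha_2$ in $G-v$, obtain a cycle of length at most $7$, and contradict \cref{lem:separating-cycle}. The gap is in your claim that this cycle $C$ is (essentially always) separating; that claim is neither proved nor true in general. First, your counting argument is about edges where it would need vertices: that at most $|P_{\mathrm{int}}|+2\le 4$ edges of $\partial\alpha_3$ lie on $C$ only guarantees an edge of $\partial\alpha_3$ off $C$, and that edge may be a chord of $C$ with both endpoints on $C$, so no vertex of $\partial\alpha_3$ need lie strictly inside. (The bound $|P_{\mathrm{int}}|+2$ is itself unjustified, since $\alpha_3$ may share further edges with the chosen $D$-arc.) Second, the assertion that the unused arc of $D$ and $v$ lie on the side of $C$ opposite to $\alpha_3$ is false in general: $\alpha_3$ is incident with $e_1,e_2\subseteq D$ from outside the disc bounded by $D$, so if the shorter arc you chose avoids $e_1$ and $e_2$, then $\alpha_3$ sits in the region between $P_{\mathrm{int}}$ and the unused arc, i.e.\ on the \emph{same} side as $v$ and the unused arc, and nothing forces the other side of $C$ to contain a vertex at all.

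Because of this, the non-separating case cannot be brushed aside as you do in the last sentence. When $C$ is not separating it may (after removing chords, each of which must itself be handled via \cref{lem:face-length}) bound a face of length $5$, $6$ or $7$, which is \emph{not} excluded by \cref{lem:face-length}; the paper rules these out using \cref{lem:5face4vertex}, \cref{lem:6face2thread}, \cref{lem:3thread} and \cref{lem:7face2thread7fac}, exploiting which $2$-vertices of $\alpha_1,\alpha_2$ necessarily lie on the short cycle. This is also where the hypotheses you never use in the main case become essential: the exclusion $e_1\neq uu_1$ (resp.\ $e_1=ww_1$ when $d(u_1)=2$) is what guarantees the intermediate path avoids $u$, and without it the short cycle can legitimately bound an actual face of $G$ (the third edge at the $3$-vertex $u$ really can reach $\alpha_2$ within facial-distance $3$), so a proof of the $d\in\set{2,3}$ cases that does not invoke these hypotheses and the above lemmas cannot be complete. (Your identification of $E(\alpha_1)\cap E(\alpha_2)=\set{uv,vw}$ by ``rotations at $u$ and $w$'' also needs \cref{lem:face-length} and \cref{lem:5face4vertex} to exclude shared vertices away from $v$, but that is a minor point by comparison.)
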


\begin{proof}
	Suppose to the contrary that $e_1$ is within facial-distance $3$ from $e_2$. 
	First realize that the faces $\alpha_1$ and $\alpha_2$ share the vertices $u$, $v$, and $w$ only (use Lemmas~\ref{lem:face-length} and~\ref{lem:5face4vertex}).

	Let $\alpha$ be a face incident with both $\alpha_1$ and $\alpha_2$. 
	Consider a facial path $P$ of length $\ell \le 4$ in $\alpha$ 
	having the first edge $e_1$ and the last edge $e_2$. 
	Note that $e_1 \in E(\alpha_1)\setminus E(\alpha_2)$ 
	and $e_2 \in E(\alpha_2)\setminus E(\alpha_1)$, 
	hence $\alpha$ is unique and $\ell \ge 2$. 
	From $e_1 \neq e_2$ we infer that $|e_1 \cap e_2| \le 1$. 
	Moreover, $|e_1 \cap e_2| = 1$ yields $e_1 \cap e_2 = \set{w}$,
	which in turn means that the edges $e_1$ and $e_2$ are not facially adjacent to each other 
	(since $w$ is a $4^+$-vertex). 
	So, $e_1 \cap e_2 = \emptyset$ and $\ell \in \set{3,4}$.
	
	If $\ell = 3$, then the second edge of $P$ is $x_1x_2$, 
	where $x_1$ is a vertex of $\alpha_1$, $x_2$ is a vertex of $\alpha_2$, 
	and the requirements on $e_1$ imply $u \notin \set{x_1,x_2}$. 
	Moreover, from $d(w) \ge 4$ it follows that $x_1x_2$ is incident neither with $\alpha_1$ nor with $\alpha_2$.
	The faces $\alpha_1$ and $\alpha_2$ create in $G - v$ a $10$-face $\alpha_{1,2}$ 
	incident with ten distinct vertices and ten distinct edges. 
	From the two facial paths joining $x_1$ to $x_2$ in $\alpha_{1,2}$, 
	one does, and the other does not contain the vertex $u$; 
	let $P^+$ be the former and $P^-$ the latter one. 
	Denote by $\ell^+$ and $\ell^-$ the length of $P^+$ and $P^-$, respectively, 
	and so $\ell^+ + \ell^- = 10$. 
	Use $P^+$, $P^-$, and the edge $x_2x_1$ to construct cycles $C^+ = P^+ x_1$ and $C^- = P^- x_1$. 
	The sum of lengths of $C^+$ and $C^-$ is $(\ell^+ + 1) + (\ell^- + 1) = 12$. 
	We have $\min(\ell^+,\ell^-) = \ell^* \le 5$, where $* \in \set{+,-}$.
	If $C^*$ is a separating cycle, its existence contradicts \cref{lem:separating-cycle}. 
	On the other hand, if $C^*$ is not separating, it either bounds a face in $G$ 
	contradicting one of Lemmas~\ref{lem:face-length}, \ref{lem:5face4vertex}, \ref{lem:6face2thread}, 
	or it has at least one chord, which ultimately yields a contradiction to \cref{lem:face-length}.
	
	If $\ell = 4$, then $P = y_1 x_1 z x_2 y_2$, 
	where $e_1 = y_1 x_1$, $e_2 = x_2 y_2$, and $u \notin \set{x_1,x_2}$.
	Let $P^+$, $P^-$, $\ell^+$, $\ell^-$, and $\ell^*$ be defined as in the case $\ell = 3$, 
	and let $C^+ = P^+zx_1$, $C^- = P^-zx_1$. 
	Now, the sum of lengths of $C^+$ and $C^-$ is $(\ell^+ + 2) + (\ell^- + 2) = 14$.
	Again by \cref{lem:separating-cycle}, the cycle $C^*$ is not separating. 
	If $\ell^* \le 4$, a contradiction is reached as above. 
	Finally, assume that $\ell^* = 5 = l^+ = l^-$. 
	Since $C^+$ is not a separating cycle (we can choose $* = +$), 
	and does not have a chord (this would contradict \cref{lem:face-length}), 
	it bounds a face in $G$, which is impossible by Lemmas~\ref{lem:3thread} and~\ref{lem:7face2thread7fac}. 
\end{proof}

%Lemma 21
\begin{lemma}
	\label{lem:7face7face2verts}
	Let $\alpha_1$ and $\alpha_2$ be two $7$-faces in $G$ that have a common $2$-vertex $v$.
	If $\alpha_1$ and $\alpha_2$ have at least
	 two incident $2$-vertices each, then $v$ has two $4^+$-neighbors.
\end{lemma}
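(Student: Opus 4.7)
The plan is to argue by contradiction. Suppose $v$ has a $3^-$-neighbor $u$, and let $w$ be the other neighbor. By \cref{lem:3thread} combined with \cref{lem:7face2thread7fac}, $u$ cannot be a $2$-vertex, since otherwise the $2$-thread $(u,v)$ would be incident with both $7$-faces $\alpha_1$ and $\alpha_2$; so $d(u)=3$. To show $d(w)\ge 4$: if either $\alpha_i$ contained a $2$-thread, then since $v$'s neighbors $u,w$ are both $3^+$-vertices, $v$ would be a $2$-vertex on $\alpha_i$ distinct from the thread, and \cref{lem:7face2thread2vert} would force $v$ to have two $4^+$-neighbors, contradicting $d(u)=3$. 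Hence neither $\alpha_1$ nor $\alpha_2$ contains a $2$-thread; applying \cref{lem:7face6face} to either face yields that every $2$-vertex on it has at least one $4^+$-neighbor, so $d(w)\ge 4$.

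It then remains to derive a contradiction in the configuration where $v$ is incident with two $7$-faces $\alpha_1,\alpha_2$ (each having at least two incident $2$-vertices and no $2$-thread), with $d(u)=3$ and $d(w)\ge 4$. The reduction I propose takes $G'=G-v$, in which $\alpha_1$ and $\alpha_2$ merge into a $10$-face $\beta$ whose boundary consists of the $10$ edges of $E(\alpha_1)\cup E(\alpha_2)\setminus\{uv,vw\}$. Using \cref{lem:7faceSeparatingCycle} (together with its symmetric counterpart obtained by swapping $\alpha_1\leftrightarrow\alpha_2$), I would select two disjoint pairs $(e_1,e_2)$ and $(e'_1,e'_2)$ with $e_j,e'_j\in E(\alpha_j)$ for $j=1,2$, such that each pair is not $3$-facially adjacent in $G$, each pair has facial-distance at least $4$ on $\beta$ (so it is not $3$-facially adjacent in $G'$ either), and the two chords corresponding to these pairs are non-crossing in $\beta$. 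Simultaneous identification of both pairs then produces a plane pseudograph $G^*$ with $|V(G)|-5$ vertices.

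By the minimality of $G$, $G^*$ admits a $3$-FEC $\sigma^*$ with at most $10$ colors. Pulling $\sigma^*$ back to $G$ gives a partial $3$-FEC in which only $uv$ and $vw$ are non-colored, and each of the two identified pairs shares a color. The $3$-facial-neighborhood of $uv$ (respectively $vw$) in $G$ consists exactly of the $10$ edges of $E(\alpha_1)\cup E(\alpha_2)\setminus\{uv,vw\}$ together with $vw$ (resp.~$uv$). Thanks to the two identifications, these $10$ colored edges use at most $8$ distinct colors; hence $|A(uv)|=|A(vw)|\ge 2$ and $A(uv)=A(vw)$. Picking two distinct colors from this common set, one for each of $uv$ and $vw$, extends $\sigma^*$ to a $3$-FEC of $G$ with at most $10$ colors, the required contradiction.

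The main obstacle is in the identification step when $d(u_1)=2$ or $d(u_1')=2$, where $u_1$, $u_1'$ are the $\alpha_1$-neighbor and $\alpha_2$-neighbor of $u$. In that case \cref{lem:7faceSeparatingCycle} pins one admissible edge uniquely to $ww_1$ or $ww_1'$, so I would need a careful subcase analysis (exploiting the known position of the second $2$-vertex on $\alpha_1$ or $\alpha_2$, and possibly replacing one of the identifications with additional recoloring via \cref{lem:3vert-recolor}) to still obtain the two non-conflicting identifications while preserving planarity.
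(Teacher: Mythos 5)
Your reduction has a genuine gap at its central step. You claim that pulling $\sigma^*$ back from $G^*$ to $G$ yields a partial \fec in which only $uv$ and $vw$ are non-colored. That is not true: deleting $v$ (and then cutting the resulting $10$-face $\beta$ by identifications) destroys facial adjacencies that exist in $G$. Concretely, the edges $uu_1$ and $ww_1$ of $\alpha_1$ are at facial-distance $3$ in $G$ (via the trail $u_1uvww_1$), but in $G-v$ they lie at facial-distance $4$ along $\beta$, and after your identifications they may even end up on different sub-faces of the dumbbell; so $\sigma^*$ may legitimately assign them the same color, which is a conflict in $G$. The same problem affects many pairs of edges of $\alpha_1$ (all of whose edges are pairwise $3$-facially adjacent in $G$, since $\alpha_1$ is a $7$-face) and of $\alpha_2$. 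This is precisely why the paper, after each identification, \emph{uncolors} all edges of $\alpha_{1,2}$ other than the identified ones and then recolors them by \cref{thm:Hall} or \cref{thm:list-assignment}; that recoloring, with its count of available colors (at least $3$ for edges at $2$-vertices, at least $2$ otherwise, and the separation of $E_1$ from $E_2$ via \cref{lem:7faceSeparatingCycle}), is where essentially all the work of the proof lies and is entirely absent from your argument. Your final step (the ten boundary edges carry at most $8$ colors, leaving two for $uv$ and $vw$) is correct, but only once a valid recoloring of the boundary achieving that count has actually been produced.

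A second, related issue is that two disjoint identifiable cross-pairs need not exist. When $d(u_1)=d(u_2)=2$, \cref{lem:7faceSeparatingCycle} certifies only the single pair $\set{ww_1,ww_2}$, and the paper's case {\bf(3)} cannot be handled by a second identification at all: there the reduction saves the second color by showing that some color can be reused between an edge of $\alpha_1$ and an edge of $\alpha_2$ during the recoloring, and the proof closes with a delicate contradiction on the structure of the available-color sets (the facial path $u_2uu_1z_1z_2$ argument). So the difficulty you flag in your last paragraph is not a technicality to be patched by "additional recoloring"; it is a genuinely different case requiring a different mechanism. Your opening paragraph (establishing $d(u)=3$ and $d(w)\ge 4$ from \cref{lem:3thread}, \cref{lem:7face2thread7fac}, \cref{lem:7face2thread2vert}, and \cref{lem:7face6face}) is correct and matches the paper.
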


\begin{proof}
	Suppose the contrary and let $u$ and $w$ be distinct neighbors of $v$. 
	By Lemmas~\ref{lem:7face2thread7fac} and~\ref{lem:7face6face}, 
	we may assume, without loss of generality, that $d(u) = 3$ and $d(w) \ge 4$. 
	For both $i \in \set{1,2}$ there is in the face $\alpha_i$ a neighbor $u_i \neq v$ and $w_i \neq v$ of $u$ and $w$, respectively. 
	Again without loss of generality, we may assume that $d(u_1) \ge d(u_2)$. 
	Furthermore, let $\alpha_{1,2}$ be the $10$-face in $G - v$ 
	created from $\alpha_1$ and $\alpha_2$ (cf. the proof of Lemma~\ref{lem:7faceSeparatingCycle}).
	
	\medskip
	\noindent \textbf{(1)} \quad If $d(u_2) \ge 3$, 
	there exist vertices $x \neq v$ and $y \neq v$ incident with $\alpha_1$ and $\alpha_2$, respectively, 
	such that $d(x) = d(y) = 2$. 
	Let $G'$ be the graph obtained from $G - v$ by identifying the two edges 
	incident with $x$ and the two edges incident with $u_2$. 
	By the minimality of $G$, there exists a \fec $\sigma'$ of $G'$ using at most $10$ colors. 
	From the coloring of $G - v$ induced by $\sigma'$ we obtain a coloring $\sigma$ by uncoloring all edges of $\alpha_{1,2}$ 
	that are incident neither with $x$ nor with $u_2$. 
	By \cref{lem:7faceSeparatingCycle}, $\sigma$ is a partial \fec of $G$. 
	Let $E_i$ be the set of (all) three non-colored edges incident in $G$ with the face $\alpha_i$, for $i = 1,2$. 
	We can color the edges of $E_1$ and $E_2$ separately, i.e., 
	when coloring the edges of $E_i$, we suppose that the edges of $E_{3-i}$ are still non-colored, for $i = 1,2$. 
	For that purpose, note that the edge $uu_1 \in E_1$ has at least $3$ available colors, 
	and there is an edge $e_2 \in E_2$ incident with $y$ such that $e_2$ has at least $3$ available colors as well. 
	Furthermore, at least $2$ colors are available for any other edge in $E_1 \cup E_2$. 
	Therefore, by \cref{thm:Hall}, the mentioned separate coloring of edges in $E_1 \cup E_2$ is possible, 
	and, by \cref{lem:7faceSeparatingCycle}, results in a \fec of $G - v$, 
	in which edges incident with $\alpha_{1,2}$ use at most $8$ colors. 
	Two of the remaining colors then suffice to color the edges $uv$ and $vw$.
	
	\medskip
	\noindent \textbf{(2)} \quad  If $d(u_1) \ge 3$ and $d(u_2) = 2$, 
	there exists a $2$-vertex $x$ incident with $\alpha_1$. 
	Let $G'$ be the graph constructed from $G-v$ by identifying the two edges 
	incident with $x$ and the two edges incident with $w_2$. 
	By the minimality of $G$, there exists a \fec $\sigma'$ of $G'$ using at most $10$ colors. 
	From the coloring of $G-v$ induced by $\sigma'$ we obtain a coloring $\sigma$ 
	by uncoloring all edges of $\alpha_{1,2}$ that are incident neither with $x$ nor with $w_2$. 
	By \cref{lem:7faceSeparatingCycle}, $\sigma$ is a partial \fec of $G$. 
	Let the edge sets $E_1$ and $E_2$ be defined as in the case {\bf(1)}. 
	Color first the edges of $E_1$ by \cref{thm:Hall} noting that the number of available colors is at least $4$ 
	for the edge $uu_1$ and at least $2$ for the remaining two edges. 
	Next, color the three edges of $E_2$, again by \cref{thm:Hall}, 
	having in mind that the number of available colors is now at least $3$ for the two edges incident with
	$u_2$ and at least $2$ for the last edge. 
	The edges $uv$ and $vw$ are then colored as before.
	
	\medskip
	\noindent \textbf{(3)} \quad If $d(u_1) = d(u_2) = 2$, 
	let $G'$ be created from $G - v$ by identifying the edges $ww_1$ and $ww_2$. 
	By the minimality of $G$, there exists a \fec $\sigma'$ of $G'$ using at most $10$ colors. 
	From the coloring of $G - v$ induced by $\sigma'$ we obtain a coloring $\sigma$ 
	by uncoloring all edges of $\alpha_{1,2}$ not incident with $w$.
	By \cref{lem:7faceSeparatingCycle}, $\sigma$ is a partial \fec of $G$, 
	and, without loss of generality, we may assume that $\sigma(ww_1) = \sigma(ww_2) = 1$. 
	Denote by $E_i^+$/$E_i^-$ the set of non-colored edges incident in $G$ 
	with the face $\alpha_i$ that are/are not incident with $u_i$, for $i = 1,2$. 
	Notice that the number of available colors is at least $6$ for any edge of $E_1^+ \cup E_2^+$ 
	and at least $3$ for any edge of $E_1^- \cup E_2^-$.

	Suppose now that we are able to use the same color for an edge of $E_1^+$ and an edge of $E_2^-$. 
	Then, by \cref{thm:list-assignment}, $\sigma$ is extendable to $G-v$. 
	A similar extension is possible if the same color can be used either for an edge of $E_1^-$
	and an edge of $E_2^+$, or for the edge of $E_1^-$ 
	incident with $w_1$ and the edge of $E_2^-$ incident with $w_2$. 
	The final extension of the coloring of $G-v$ to $G$ works as in the case {\bf(1)}.	
	
	Thus, we may assume, without loss of generality, 
	that the set of available colors is $[2,4]$ 
	for each edge in $E_1^-$, $[5,10]$ for each edge in $E_2^+$, 
	$[5,7]$ for each edge in $E_2^-$ and $[2,4] \cup [8,10]$ 
	for each edge in $E_1^+$. 
	This, however, leads to a contradiction: 
	since in the facial path $u_2 u u_1 z_1 z_2$ 
	(where $z_2$ is necessarily not incident with $\alpha_1$) 
	the edge $z_1 z_2$ has a color from $[5,7]$, 
	the set of available colors for $uu_2 \in E_2^+$ is not $[5,10]$.
\end{proof}

%Lemma 22
\begin{lemma}
	\label{lem:7faceThree2verts}
	Let $\alpha_1$ and $\alpha_2$ be two $7$-faces in $G$ that have a common $2$-vertex $v$.
	If $\alpha_1$ has at least three incident $2$-vertices, then $v$ is the only $2$-vertex incident with $\alpha_2$.
\end{lemma}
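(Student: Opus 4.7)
The plan is to argue by contradiction: assume $\alpha_2$ contains a $2$-vertex $v' \neq v$. Then both $\alpha_1$ and $\alpha_2$ have at least two incident $2$-vertices, so by \cref{lem:7face7face2verts}, $v$ has two $4^+$-neighbors $u$ and $w$, and $\alpha_1, \alpha_2$ share precisely the path $u v w$. Because $\alpha_1$ has at least three incident $2$-vertices, I choose a second $2$-vertex $x \neq v$ on $\alpha_1$ and note that a third $2$-vertex $x' \in V(\alpha_1) \setminus \{v, x\}$ is also available. By Lemmas~\ref{lem:3thread}, \ref{lem:7face2thread2vert}, and \ref{lem:7face2thread7fac}, the arrangement of these three $2$-vertices along $\alpha_1$ is tightly constrained (no $3$-threads, and any isolated $2$-vertex has two $4^+$-neighbors), which I would exploit in the case analysis below.

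The main step mirrors the construction of \cref{lem:7face7face2verts}: remove $v$ (so $\alpha_1 \cup \alpha_2$ collapses to a single $10$-face $\alpha_{1,2}$ of $G - v$), form $G'$ from $G - v$ by identifying the two edges incident with $x$ and the two edges incident with $v'$, and apply the minimality of $G$ to obtain a \fec $\sigma'$ of $G'$ with at most $10$ colors. A separating-cycle argument along the lines of \cref{lem:7faceSeparatingCycle}, adapted to the present setting where $d(u), d(w) \ge 4$ (so the cycle-length counts of that lemma only improve), shows that the identification is legal, so $\sigma'$ pulls back to a partial \fec $\sigma$ of $G$ whose only uncolored edges are a set $E_1 \subseteq E(\alpha_1)$ of size three, a set $E_2 \subseteq E(\alpha_2)$ of size three, and the two edges $uv, vw$. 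I would color $E_1$ and $E_2$ separately via \cref{thm:Hall}: the third $2$-vertex $x'$ guarantees that at least one edge of $E_1$ has at least four available colors while the other two have at least two, so the Hall condition holds; a symmetric count, using that $v'$ is a $2$-vertex, handles $E_2$. The resulting coloring uses at most eight distinct colors on the ten edges of $\alpha_{1,2}$, so at least two free colors remain for $uv$ and $vw$, producing a \fec of $G$ with at most $10$ colors and contradicting the minimality of $G$.

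The main obstacle is the case analysis on the positions of the $2$-vertices in $\alpha_1$---in particular whether $x$ or $x'$ lies in a $2$-thread and on the degrees $d(u_1), d(u_2)$ of the $\alpha_1$- and $\alpha_2$-neighbors of $u$, paralleling the three cases of \cref{lem:7face7face2verts}. In each case one must choose the pair of identified edges so that they remain at facial-distance greater than $3$ in $G$; this is verified using \cref{lem:separating-cycle} (no short separating cycles), \cref{lem:face-length} (no short faces), and the structural control provided by \cref{lem:7faceSeparatingCycle}. The presence of $x'$ is what ultimately makes the Hall step go through uniformly in every case, which is the crucial improvement over \cref{lem:7face7face2verts}.
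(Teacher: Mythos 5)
Your overall strategy is the paper's: delete $v$, use the fact (from \cref{lem:7face7face2verts}) that both neighbors of $v$ are $4^+$-vertices to show, as in \cref{lem:7faceSeparatingCycle}, that edges of $\alpha_1$ and $\alpha_2$ not incident with $v$ are never $3$-facially adjacent, identify edges of $\alpha_1$ with edges of $\alpha_2$ to invoke minimality, color the two triples of uncolored edges separately by \cref{thm:Hall}, and finish with the two edges at $v$. However, there is a genuine gap in your Hall step for $E_2$, caused by your choice of where to identify. You identify the edges incident with the $2$-vertex $v'$ of $\alpha_2$; since $v$ is deleted and $v'$ is thereby ``used up'', the hypothesis $n_2(\alpha_2)\ge 2$ leaves no $2$-vertex of $\alpha_2$ incident with an uncolored edge. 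Consequently each of the three edges of $E_2$ is only guaranteed $10-8=2$ available colors (its $3$-facial neighborhood can contain up to $12$ edges, of which only $uv$, $vw$ and the other two edges of $E_2$ are uncolored), and since the three edges of a $7$-face are pairwise in conflict, their conflict graph is a triangle: if the three lists coincide in a common $2$-set, neither \cref{thm:Hall} nor \cref{thm:list-assignment} applies. Your appeal to a ``symmetric count, using that $v'$ is a $2$-vertex'' does not work, because $v'$ is incident with no edge of $E_2$; the situation is not symmetric to $\alpha_1$, where the third $2$-vertex $x'$ really does supply an edge of $E_1$ with a larger list (at least $3$ colors is what the count gives, not $4$, but $(2,2,3)$ already suffices for Hall).

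The paper avoids this by identifying at $3^+$-vertices rather than at $2$-vertices: by \cref{lem:3thread} each $\alpha_i$ contains a $3^+$-vertex $u_i\notin\set{v,v_1,v_2}$, and the identification pairs the facial edges at $u_1$ with those at $u_2$. Then on each face a $2$-vertex $w_i\neq v$ (which exists since $n_2(\alpha_i)\ge 2$) remains incident with an uncolored edge of $E_i$, giving one list of size at least $3$ in each triple, so Hall applies to $E_1$ and to $E_2$ separately; the rest of your argument (at most $8$ colors on the ten edges of $\alpha_{1,2}$, hence two colors left for $uv$ and $vw$) then goes through exactly as you describe. So your proposal is repairable, but as written the coloring of $E_2$ is not justified.
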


\begin{proof}
	Suppose the contrary and let $v$ be a $2$-vertex incident with $7$-faces $\alpha_1$ and $\alpha_2$,
	where $n_2(\alpha_1) \ge 3$ and $n_2(\alpha_2) \ge 2$.
	By Lemma~\ref{lem:7face7face2verts}, both neighbors of $v$, $v_1$ and $v_2$, are $4^+$-vertices.
	This implies that every pair of edges $e_1 \in E(\alpha_1)$ and $e_2 \in E(\alpha_2)$, which are not incident with $v$, 
	are not $3$-facially adjacent by Lemmas~\ref{lem:separating-cycle} and~\ref{lem:3thread}.
	Furthermore, by \cref{lem:3thread}, we also have that there exist vertices $u_1$ and $u_2$ of $\alpha_1$ and $\alpha_2$, respectively, 
	such that $u_1,u_2 \notin \set{v,v_1,v_2}$ and $d(u_1),d(u_2) \ge 3$.

	Denote by $\alpha_{1,2}$ the face of the graph $G - v$ created from the faces $\alpha_1$ and $\alpha_2$. 
	Let $G'$ be the graph obtained from $G - v$ by identifying the two edges incident 
	with $u_1$ and the two edges incident with $u_2$. 
	By the minimality of $G$, there exists a \fec $\sigma'$ of $G'$ using at most $10$ colors. 
	From the coloring of $G - v$ induced by $\sigma'$ we obtain a coloring $\sigma$ 
	by uncoloring all edges of $\alpha_{1,2}$ that are incident neither with $u_1$ nor with $u_2$. 
	By mimicking the proof of \cref{lem:7faceSeparatingCycle}, 
	we show that if $e_i$ is any edge of $\alpha_{1,2}$ incident in $G$ with the face $\alpha_i$, for $i = 1,2$, 
	then $e_1$ is not $3$-facially adjacent (in $G$) to $e_2$: 
	since $d(v_1) \ge 4$ and $d(v_2) \ge 4$, 
	the cycle $C^*$ of length at most $7$ from the mentioned proof 
	either is separating or has a chord, in both cases we obtain a contradiction. 
	So, $\sigma$ is a partial \fec of $G - v$.
	
	Let us extend $\sigma$ to a (full) \fec of $G-v$. 
	For that purpose consider in the face $\alpha_{1,2}$ a $2$-vertex $w_1 \neq v$ and a $2$-vertex $w_2 \neq v$ 
	that is in $G$ incident with the face $\alpha_1$ and $\alpha_2$, respectively. 
	If the sets of edges $E_1$ and $E_2$ are defined as in the proof of \cref{lem:7face7face2verts}, case {\bf(1)}, 
	the edges of $E_1$ and those of $E_2$ can be colored separately (using \cref{thm:Hall}). 
	Indeed, $|E_i| = 3$, while the number of available colors is at least $3$ for any (at least one) 
	edge of $E_i$ incident with $w_i$ and at least $2$ for any of the remaining edges of $E_i$, for $i = 1,2$.
	
	The number of available colors is now at least $2$ for both non-colored edges $vv_1$, $vv_2$ of $G - v$, 
	hence there is a \fec of $G$ using at most $10$ colors, a contradiction. 
%	Let $G'$ be the graph obtained from $G$ by removing $v$ and identifying the two edges of $\alpha_1$ incident with $u_1$
%	with the two edges of $\alpha_2$ incident with $u_2$ (note that we identify the pairs of edges in such a way that 
%	two edges which are consecutive on the facial walk of the face corresponding to $\alpha_1$ and $\alpha_2$ after removing $v$).
%	By the minimality, $G'$ admits a \fec $\sigma$ using at most $10$ colors.
%	Consider the partial coloring of $G$ induced by $\sigma$ and uncoloring the edges of $\alpha_1$ and $\alpha_2$, 
%	except for the four edges incident with $u_1$ and $u_2$, which are colored with two distinct colors.
%
%	We first show that $\sigma$ can be extended to the edges of $\alpha_1$ and $\alpha_2$ which are not incident with $v$.
%	Since there is at least one $2$-vertex distinct from $v$ incident with $\alpha_1$ and $\alpha_2$,
%	there is an edge on each of the two faces with at least $3$ available colors, while the other four edges have at least 
%	$2$. Since there are three edges on each face, we can color them.
%
%	It remains to color the two edges incident with $v$. 
%	Each of them has at least $2$ available colors, so we can color them as well.
%	Thus, we extended $\sigma$ to all edges of $G$, a contradiction.
\end{proof}

\subsection{Discharging}
\label{sec:disc}

In this part, we decribe the discharging procedure.
First, we assign initial charges to all vertices and faces of $G$.
For every vertex $v \in V(G)$, we set
$$
	\mathrm{ch}_0(v) = 2d(v) - 6\,,
$$
and for every face $\alpha \in F(G)$, we set
$$
	\mathrm{ch}_0(\alpha) = \ell(\alpha) - 6\,.
$$
By Euler's Formula, the total charge of $G$, i.e., the sum of all initial charges, is
\begin{align}
	\label{eq:initch}
	\sum_{v \in V(G)} \mathrm{ch}_0(v) + \sum_{\alpha \in F(G)} \mathrm{ch}_0(\alpha) 
		= \sum_{v \in V(G)} \big(2d(v) - 6 \big) + \sum_{\alpha \in F(G)} \big(\ell(\alpha) - 6 \big) = -12\,.
\end{align}

During the discharging process, we apply the following rules to redistribute the charges between vertices and faces of $G$.
Observe that all amounts of charges sent (and so received, too) by the rules are positive.
\begin{itemize}	
	\item[$R_1~$] Every $4^+$-vertex sends $\frac{1}{5}$ to every incident $5$-face.
	\item[$R_2~$] 
		For each pair $v$ and $u$, 
		where $v$ is a $4^+$-vertex and $u$ is a $2$-vertex adjacent to $v$
		and incident with faces $\alpha_1$ and $\alpha_2$ 
		(note that $\alpha_1 \neq \alpha_2$ by $2$-connectivity of $G$), 
		a charge is sent according to the following
		(without loss of generality, we may assume that $\ell(\alpha_1) \le \ell(\alpha_2)$
		and if $\ell(\alpha_1) = \ell(\alpha_2)$, then $n_2(\alpha_1) \ge n_2(\alpha_2)$):
		\begin{itemize}
			\item[$(a)$] If $\ell(\alpha_1) = 6$, then $v$ sends $\frac{2}{3}$ to $\alpha_1$.
			\item[$(b)$] If $\ell(\alpha_1) = \ell(\alpha_2) = 7$ and $n_2(\alpha_1) = n_2(\alpha_2) = 2$, 
				then $v$ sends $\frac{1}{3}$ to $\alpha_1$ and $\frac{1}{3}$ to $\alpha_2$.
			\item[$(c)$] If $\ell(\alpha_1) = \ell(\alpha_2) = 7$, $n_2(\alpha_1) \ge 2$, and $n_2(\alpha_2) = 1$, 
				then $v$ sends $\frac{2}{3}$ to $\alpha_1$.
			\item[$(d)$] If $\ell(\alpha_1) = 7$ and $\ell(\alpha_2) \ge 8$, 
				then $v$ sends $\frac{2}{3}$ to $\alpha_1$.
		\end{itemize}
	\item[$R_3~$] Every face sends $1$ to every incident $2$-vertex that is not a part of a $2$-thread.
	\item[$R_4~$] Every $7$-face sends $\frac{5}{6}$ to every incident $2$-vertex that is a part of a $2$-thread.
	\item[$R_5~$] Every $8^+$-face sends $\frac{7}{6}$ to every incident $2$-vertex that is a part of a $2$-thread.
\end{itemize}

We prepared all the tools we need to complete our proof of the main theorem.

\begin{proof}[Proof of \cref{thm:main}]
	Clearly, the redistribution of charges does not change the total charge of $G$.
	So,
	\begin{align}
		\label{eq:finalch}
		\sum_{v \in V(G)} \mathrm{ch}_\mathrm{f}(v) + \sum_{\alpha \in F(G)} \mathrm{ch}_\mathrm{f}(\alpha) = -12\,,
	\end{align}
	where $\mathrm{ch}_\mathrm{f}(v)$/$\mathrm{ch}_\mathrm{f}(\alpha)$ 
	stands for the final charge (the ``local'' result of the charge redistribution) 
	of a vertex $v$/a face $\alpha$ of $G$. 
	We are going to show that final charges of vertices and faces of $G$ are all nonnegative. 
	This will mean that the total final charge of $G$ is nonnegative, too, in contradiction to~\eqref{eq:finalch}.
	%Clearly, redistribution of charges does not change the total charge.
	%However, as we show in this proof, after applying the discharging rules, 
	%every vertex and face in $G$ has nonnegative final charge $\chf$, and hence
	%the total charge in $G$ must be nonnegative.	

	\medskip
	We first show that each vertex $v \in V(G)$ has a nonnegative final charge.
	In particular, since by \cref{lem:min-deg-2} there are no $1$-vertices in $G$, 
	and $3$-vertices have initial charge $0$ while not sending any charge, 
	we only consider $2$-vertices and $4^+$-vertices.
	
	\textbullet\quad Suppose first that $v$ is a $2$-vertex in $G$, incident with faces $\alpha_1$ and $\alpha_2$.
	Without loss of generality, we assume $\ell(\alpha_1) \le \ell(\alpha_2)$.	
	If $v$ is not a part of a $2$-thread, then it receives $1$ from each of $\alpha_1$ and $\alpha_2$ by $R_3$.
	Hence, $\chf(v) = 2d(v)-6 + 2\cdot 1 = 0$.
	If $v$ is a part of a $2$-thread, then $\ell(\alpha_1) \ge 7$ by \cref{cor:2thread7face}.
	Moreover, by \cref{lem:7face2thread7fac}, $\ell(\alpha_2) \ge 8$,
	and thus by $R_5$, $v$ receives $\frac{7}{6}$ from $\alpha_2$.
	On the other hand, $v$ receives at least $\frac{5}{6}$ from $\alpha_1$ by $R_4$ or $R_5$.
	Hence, $\chf(v) \ge 2d(v)-6 + \frac{5}{6} + \frac{7}{6} = 0$.
	
	\textbullet\quad Now, suppose that $v$ is a $4^+$-vertex. 
	Note that, by \cref{lem:5face4vertex}, $i_5(v) + n_2(v) \le d(v)$, where $i_5(v)$ is the number of $5$-faces incident with $v$.
	%the sum of incident $5$-faces and adjacent $2$-vertices of $v$ is at most $d(v)$.	
	Moreover, if $d(v) = 4$, then $n_2(v) \le 3$ by \cref{lem:4vert2verts}, 
	and if additionally $n_2(v)=3$, then $v$ is not incident with a $5$-face by \cref{lem:5face4vertex}.
	Thus, if $d(v) = 4$, then $v$ sends at most $3 \cdot \frac{2}{3}$ of charge by $R_1$ and/or $R_2$, 
	and so $\chf(v) \ge 2d(v) - 6 - 3 \cdot \frac{2}{3} = 0$.
	If $d(v) \ge 5$, then $v$ sends at most $\frac{2}{3}$ of charge for each of at most $d(v)$ adjacent $2$-vertices by $R_1$ and/or $R_2$,
	and so $\chf(v) \ge 2d(v)-6 - d(v) \cdot \frac{2}{3} > 0$.	
	So, after redistribution of charges, all vertices in $G$ have nonnegative final charges.
	
	\medskip
	Next, we show that each face $\alpha \in F(G)$ has a nonnegative final charge. 
	Again, we consider several cases, regarding the length of $\alpha$.
	Recall that by \cref{lem:face-length}, $\alpha$ is of length at least $5$.

	\textbullet\quad Suppose that $\alpha$ is a $5$-face in $G$. 
	By \cref{lem:5face4vertex}, it is incident only with $4^+$-vertices,
	and so it receives $5 \cdot \frac{1}{5}$ by $R_1$.
	Moreover, it does not send any charge, thus $\chf(\alpha) = \ell(\alpha)-6 + 5 \cdot \frac{1}{5} = 0$.
	
	\textbullet\quad Suppose that $\alpha$ is a $6$-face in $G$. 
	By \cref{lem:6face2thread}, every $2$-vertex incident with $\alpha$ is adjacent to two $4^+$-vertices.
	Thus, for every adjacent $2$-vertex, $\alpha$ receives $2 \cdot \frac{2}{3}$ by $R_2(a)$, and sends $1$ by $R_3$.
	Altogether, its final charge is $\chf(\alpha) \ge \ell(\alpha)-6 + 2n_2(\alpha) \cdot \frac{2}{3} - n_2(\alpha) = \frac{1}{3}n_2(\alpha) \ge 0$.
	
	\textbullet\quad Suppose that $\alpha$ is a $7$-face in $G$. 
	It sends charge to incident $2$-vertices by $R_3$ and $R_4$, and it receives charge from incident $4^+$-vertices by $R_2$.	
	We consider the cases regarding incident $2$-vertices.
	%Clearly, if there are no $2$-vertices incident with $\alpha$, it retains a positive charge.
	If $n_2(\alpha) \le 1$, then, by $R_3$, $\mathrm{ch}_\mathrm{f}(\alpha) \ge \ell(\alpha) - 6 - n_2(\alpha) = 1 - n_2(\alpha) \ge 0$.
	%If $\alpha$ is incident with one $2$-vertex, 
	%then, $\alpha$ sends $1$ to $v$ by $R_3$ and it receives at most $\frac{2}{3}$ of charge by $R_2(d)$. 
	%In any case, $\chf(\alpha) \ge \ell(\alpha)- 6 - 1 = 0$.
	
	Now, suppose that $\alpha$ is incident with two $2$-vertices $v_1$ and $v_2$,
	and let $\alpha_1$ and $\alpha_2$ be the faces incident with $v_1$ and $v_2$, respectively, that are distinct from $\alpha$ (possibly, $\alpha_1 = \alpha_2$).
	Then, by \cref{lem:6face7face}, none of these $2$-vertices is incident with a $6$-face.
	If $v_1$ and $v_2$ form a $2$-thread, then, by \cref{lem:7face2thread7fac}, they are also incident with an $8^+$-face.
	By \cref{lem:7face2th3nei}, at least one of $v_1$ and $v_2$ has a $4^+$-neighbor which sends $\frac{2}{3}$ to $\alpha$ by $R_2(d)$.
	On the other hand, $\alpha$ sends $\frac{5}{6}$ to each of $v_1$ and $v_2$ by $R_4$.
	Hence, $\chf(\alpha) \ge \ell(\alpha)-6 + \frac{2}{3} - 2\cdot\frac{5}{6} = 0$.
	Thus, we may assume that $v_1$ and $v_2$ are not adjacent, 
	and by \cref{lem:7face6face}, each of them has at least one $4^+$-neighbor.
	If $i \in \set{1,2}$ and $\ell(\alpha_i) = 7$, then, by \cref{lem:7faceThree2verts}, 
	$n_2(\alpha_i) \le 2$;
	if, moreover, $n_2(\alpha_i) = 2$, then, by \cref{lem:7face7face2verts},
	$v_i$ has two $4^+$-neighbors.
	%Moreover, if $\ell(\alpha_1) = 7$ and $n_2(\alpha_1)=2$, then $v_1$ has two $4^+$-neighbors by \cref{lem:7face7face2verts}.
	%An analogous argument holds for $v_2$ if $\ell(\alpha_2) = 7$ and $n_2(\alpha_2)=2$.
	Therefore, $\alpha$ receives at least $2\cdot \frac{2}{3}$ by $R_2(b)$, $R_2(c)$, or $R_2(d)$,
	and sends $2\cdot 1$ by $R_3$.
	Hence, $\chf(\alpha) \ge \ell(\alpha)-6 + 2\cdot \frac{2}{3} - 2\cdot 1 = \frac{1}{3}$.
	
	Next, if $\alpha$ is incident with three $2$-vertices, we distinguish two subcases.
	Suppose first that $\alpha$ is incident with a $2$-thread.
	Then, by \cref{lem:7face2thread2vert}, each of the incident $2$-vertices has at least one $4^+$-neighbor,
	and by $R_2(c)$ and $R_2(d)$, $\alpha$ receives at least $3\cdot \frac{2}{3}$ of charge 
	(note that by \cref{lem:7faceThree2verts}, if $\ell(\alpha_1) = 7$, then $\alpha$ receives charge by $R_2(c)$).
	It sends $1$ by $R_3$ and $2\cdot \frac{5}{6}$ by $R_4$.
	Hence, $\chf(\alpha) \ge \ell(\alpha)-6 + 3\cdot \frac{2}{3} - 1 - 2\cdot \frac{5}{6} = \frac{1}{3}$.
	Similarly, if $\alpha$ is not incident with a $2$-thread, 
	then, by \cref{lem:7face6face}, each of the incident $2$-vertices has at least one $4^+$-neighbor,
	and by $R_2(c)$ and $R_2(d)$, $\alpha$ receives at least $3\cdot \frac{2}{3}$ of charge.
	Since $\alpha$ sends $3\cdot 1$ by $R_3$, 
	its final charge is $\chf(\alpha) \ge \ell(\alpha)-6 + 3\cdot \frac{2}{3} - 3\cdot 1 = 0$.
	
	Finally, suppose that $\alpha$ is incident with four $2$-vertices.
	In this case, $\alpha$ is incident with at least one $2$-thread.
	Then, by \cref{lem:7face2thread2vert}, each $2$-vertex incident with $\alpha$ has a $4^+$-neighbor,
	and any $4^+$-vertex incident with $\alpha$ sends $\frac{2}{3}$ to $\alpha$ by $R_2(c)$ or $R_2(d)$ for each of its $2$-neighbors.	
	%the neighbors of $2$-vertices are $4^+$-vertices,
	%and each of them sends $\frac{2}{3}$ to $\alpha$ by $R_2(c)$ and $R_2(d)$ for each $2$-neighbor.
	Since $\alpha$ sends at most $2 \cdot \frac{5}{6}$ and $2 \cdot 1$ by $R_4$ and $R_3$, its final charge is 
	$\chf(\alpha) \ge \ell(\alpha)-6 + 4\cdot\frac{2}{3} - 2 \cdot \frac{5}{6} - 2 \cdot 1 = 0$.
	
	\textbullet\quad By \cref{lem:8face}, we can skip the assumption that $\alpha$ is an $8$-face in $G$.
	
	\textbullet\quad Suppose that $\alpha$ is a $9$-face in $G$. 
	Then, by \cref{lem:9face2vertex}, $\alpha$ is incident with no $2$-vertex, and hence $\chf(\alpha) \ge \ell(\alpha)-6 = 3$.
	
	\textbullet\quad Suppose that $\alpha$ is a $10$-face in $G$. 
	By \cref{lem:10face2vertex}, $\alpha$ is incident with at most two $2$-vertices,
	and so it sends at most $2 \cdot \frac{7}{6}$ charge by $R_3$ or $R_5$.
	So, $\chf(\alpha) \ge \ell(\alpha)- 6 - 2 \cdot \frac{7}{6} = \frac{5}{3}$.
	
	\textbullet\quad Suppose that $\alpha$ is an $11$-face in $G$. 
	Then, by \cref{cor:num2verts}, $\alpha$ is incident with at most five $2$-vertices.
	If $n_2^t(\alpha) = 0$, then it sends charge only by $R_3$. Thus, $\chf(\alpha) \ge \ell(\alpha)-6 - 5 = 0$.
	If $n_2^t(\alpha) \ge 1$, then, by \cref{cor:num2verts}, $n_2(\alpha) \le 4$.
	The charge from $\alpha$ is sent by $R_3$ and/or $R_5$, 
	thus $\chf(\alpha) \ge \ell(\alpha)- 6 - 4\cdot \frac{7}{6} = \frac{1}{3}$.
	
	\textbullet\quad Suppose that $\alpha$ is a $k$-face in $G$, $k \ge 12$.
	If $n_2^t(\alpha) = 0$, then $\alpha$ sends charge only by $R_3$, 
	and so, by \cref{cor:num2verts}, $\chf(\alpha) \ge k-6 - \lfloor k/2 \rfloor \ge \frac{k-12}{2} \ge 0$.
	If $n_2^t(\alpha) > 0$, then $\alpha$ sends charge by $R_3$ and/or $R_5$, 
	in total at most, again by \cref{cor:num2verts}, 
	\begin{align*}
		|S_1(\alpha)| + 2 |S_2(\alpha)| \cdot \frac{7}{6} &\le |S_1(\alpha)| + \frac{7}{3} \cdot \bigg\lfloor \frac{k-2|S_1(\alpha)|}{5} \bigg\rfloor \\
			&\le |S_1(\alpha)| + \frac{1}{15} \big( 7k - 14 |S_1(\alpha)| \big) \\
			&= \frac{1}{15} \big( 7k + |S_1(\alpha)| \big) \\
			&\le \frac{1}{15} \Bigg( 7k + \bigg\lfloor \frac{k}{2} \bigg\rfloor \Bigg) \\
			&\le \frac{k}{2}\,.
	\end{align*}
	Thus, for any value of $n_2^t(\alpha)$, 
	$\chf(\alpha) \ge k - 6 - \frac{k}{2} = \frac{k-12}{2} \ge 0$.
	
	\smallskip
	This proves that every face in $G$ has a nonnegative final charge, which means that the total charge in $G$ is nonnegative,
	which contradicts~\eqref{eq:finalch}.
\end{proof}

\section{Conclusion}

The problems for the edge-coloring version of $\ell$-facial coloring are clearly easier to tackle than those for the vertex version.
Recall that in the vertex version, only the case with $\ell = 1$ is resolved, and moreover, 
its only proof is implied by the Four Color Theorem.
In this paper, we resolved another case and it seems that our approach allows, with some additional effort,
settling the Facial Edge-Coloring Conjecture for several other small values of $\ell$.
However, we failed when trying to generalize our structural lemmas for large values of $\ell$ although faces of lengths at most $\ell+1$ are reducible.
Namely, to apply the discharging method, we need to send enough charge to $2$-vertices of a minimal counterexample $G$, 
and one possibility how to do that is to show that every face of $G$ is incident with at least six $3^+$-vertices.
%, one needs, e.g., show that every face is incident with at least six $3^+$-vertices.
It turns out that the most problematic faces are those of lengths $k$, for $\frac{3}{2}\ell \le k \le 2\ell$.

Thus, a step towards showing the Facial Edge-Coloring Conjecture would consist 
of finding an efficient approach for resolving the cases with large values of $\ell$.
\begin{problem}
	Find a constant $C$ such that the Facial Edge-Coloring Conjecture holds for every $\ell\ge C$.
\end{problem}

Another line of research consists of determining the upper bounds for the $\ell$-facial chromatic index of plane graphs (and graphs on other surfaces) with additional constraints.
In particular, it remains unknown how high values can the $\ell$-facial chromatic index of a plane graph with minimum degree $3$ achieve.
More generally, further research could be oriented towards
\begin{problem}
	Given $\ell \ge 4$ and $k \in [1,5]$, find an upper bound for $\ell$-facial chromatic index of plane graphs with minimum degree $k$.
	%For a plane graph $G$ with minimum degree $k$, find the upper bound for $\chife(G)$.
\end{problem}

%\bigskip
%\paragraph{\textbf{Acknowledgement}}

%We wish to express our gratitude to the anonymous referee who suggested a number of improvements to the manuscript
%which substantially improved its presentation.

%\bibliographystyle{plain}
%{
%	\bibliography{MainBase}
%}

\end{document}